\theoremstyle{thmstyleone}
\newtheorem{theorem}{Theorem}
\newtheorem{assumption}{Assumption}
\newtheorem{proposition}{Proposition}
\newtheorem{lemma}{Lemma}
\newtheorem{corollary}{Corollary}
\theoremstyle{thmstyletwo}
\newtheorem{remark}{Remark}
\newcommand{\E}{\mathbb{E}}
\newcommand{\N}{\mathbb{N}} %
\newcommand{\PP}{\mathbb{P}} %
\newcommand{\R}{\mathbb{R}} %
\newcommand{\bb}{\mathbf{b}}
\newcommand{\eb}{\mathbf{e}}
\newcommand{\fb}{\mathbf{f}}
\newcommand{\gb}{\mathbf{g}}
\newcommand{\rb}{\mathbf{r}}
\newcommand{\ub}{\mathbf{u}}
\newcommand{\vb}{\mathbf{v}}
\newcommand{\wb}{\mathbf{w}}
\newcommand{\xb}{\mathbf{x}}
\newcommand{\yb}{\mathbf{y}}
\newcommand{\Ab}{\mathbf{A}}
\newcommand{\Bb}{\mathbf{B}}
\newcommand{\Db}{\mathbf{D}}
\newcommand{\Fb}{\mathbf{F}}
\newcommand{\Gb}{\mathbf{G}}
\newcommand{\Ib}{\mathbf{I}}
\newcommand{\Jb}{\mathbf{J}}
\newcommand{\Qb}{\mathbf{Q}}
\newcommand{\Rb}{\mathbf{R}}
\newcommand{\Ub}{\mathbf{U}}
\newcommand{\Vb}{\mathbf{V}}
\newcommand{\Wb}{\mathbf{W}}
\newcommand{\Zb}{\mathbf{Z}}
\newcommand{\Gcal}{\mathcal{G}}
\newcommand{\Ncal}{\mathcal{N}}
\newcommand{\Qcal}{\mathcal{Q}}
\newcommand{\zetab}{\boldsymbol{\zeta}}
\newcommand{\etab}{\boldsymbol{\eta}}
\newcommand{\thetab}{\boldsymbol{\theta}}
\newcommand{\mub}{\boldsymbol{\mu}}
\newcommand{\Gammab}{\boldsymbol{\Gamma}}
\newcommand{\Pib}{\boldsymbol{\Pi}}
\newcommand{\Sigmab}{\boldsymbol{\Sigma}}
\newcommand{\bfu}{\boldsymbol{u}}
\renewcommand{\le}{\leqslant}
\renewcommand{\ge}{\geqslant}
\renewcommand{\leq}{\leqslant}
\renewcommand{\geq}{\geqslant}
\newcommand*{\argmin}{\mathop{\mathrm{argmin}}}
\newcommand{\diag}{\mathop{\mathrm{diag}}}
\newcommand{\rank}{\mathop{\mathrm{rank}}}
\newcommand{\range}{\mathop{\mathrm{Range}}}
\newcommand{\nucnorm}[1]{{\left\vert\kern-0.25ex\left\vert\kern-0.25ex\left\vert #1 
    \right\vert\kern-0.25ex\right\vert\kern-0.25ex\right\vert}}
\newcommand{\wh}[1]{\widehat{#1}}
\newcommand{\wt}[1]{\widetilde{#1}}
\newcommand{\eg}{\emph{e.g.}\xspace}
\newcommand{\ie}{\emph{i.e.}\xspace}
\newcommand{\iid}{\emph{i.i.d.}\xspace}
\newcommand{\rbr}[1]{\left(#1\right)}
\newcommand{\sbr}[1]{\left[#1\right]}
\newcommand{\nbr}[1]{\left\|#1\right\|}
\newcommand{\ssbr}[1]{\left\llbracket #1 \right\rrbracket}
\newcommand{\nnbr}[1]{{\left\vert\kern-0.25ex\left\vert\kern-0.25ex\left\vert #1 \right\vert\kern-0.25ex\right\vert\kern-0.25ex\right\vert}}
\newcommand{\ssepp}[2]{\left[#1 ~\middle\vert~ #2 \right]}
\newcommand{\csepp}[2]{\left\{#1 ~\middle\vert~ #2 \right\}}
\definecolor{commentcolor}{RGB}{110,154,155}   %
\renewcommand{\b}{\textbf}
\renewcommand{\t}[1]{\text{#1}}
\newcommand{\tsvd}[2]{\ssbr{#1}_{#2}}
\newcommand{\nrank}[2]{\mathop{\mathrm{rank}_{#1}}\rbr{#2}}
\newcommand{\bftheta}{\boldsymbol{\theta}}
\newcommand{\bff}{\boldsymbol{f}}
\newcommand{\bfF}{\boldsymbol{F}}
\newcommand{\bfJ}{\Jb}
\newcommand{\bfeta}{\etab}
\newcommand{\Cvar}{C_{v}}
\newcommand{\Cbias}{C_{b}}
\newcommand{\Cdisc}{C_{d}}
\title{Randomized time stepping of nonlinearly parametrized solutions of evolution problems}
\author[1]{Yijun Dong}
\author[1]{Paul Schwerdtner}
\author[1]{Benjamin Peherstorfer\thanks{Corresponding author}}
\affil[1]{Courant Institute of Mathematical Sciences, New York University,
251 Mercer St, New York, NY 10012, US}
\begin{document}

\maketitle

\begin{abstract}
The Dirac-Frenkel variational principle is a widely used building block for using nonlinear parametrizations in the context of model reduction and numerically solving partial differential equations; however, it typically leads to time-dependent least-squares problems that are poorly conditioned. This work introduces a randomized time stepping scheme that solves at each time step a low-dimensional, random projection of the parameter vector via sketching. The sketching has a regularization effect that leads to better conditioned least-squares problems and at the same time reduces the number of unknowns that need to be solved for at each time step. Numerical experiments with benchmark examples demonstrate that randomized time stepping via sketching achieves competitive accuracy and outperforms standard regularization in terms of runtime efficiency.
\end{abstract}

\vspace{0.5cm}
\noindent\textbf{Keywords:} randomized numerical linear algebra, Neural Galerkin schemes, Dirac-Frenkel variational principle, neural networks

\vspace{0.5cm}
\noindent\textbf{MSC:} 65M22, 65M22, 68T07

\section{Introduction}\label{sec:Intro}
We are interested in numerically solving time-dependent least-squares problems that can arise when nonlinear parametrizations such as neural networks are used for discretizing solution functions of partial differential equations (PDEs). The challenge that we address in this work is that the least-squares problems are poorly conditioned in many cases in this context. In this section, we review past literature and then overview our contribution, which is developing and analyzing randomized time stepping schemes that can cope with poorly conditioned least-squares problems. We close this section with an outline of the manuscript.

\subsection{Motivation}\label{sec:Intro:Motivation}
To motivate time-dependent least-squares problems, let us start by considering a time-dependent PDE
\begin{equation}\label{eq:Intro:PDEProblem}
\partial_t u(t, \xb) = f(u, \xb),\qquad \xb \in \Omega,
\end{equation}
over time $t \in [0, T]$ and spatial domain $\Omega \subseteq \R^d$. The solution function is $u: [0, T] \times \Omega \to \R$. The right-hand side function is $f$, and it can include partial derivatives of $u$ in space. Given boundary conditions and an initial condition $u_0: \Omega \to \R$, we assume well-posedness of the corresponding PDE problem over a suitable Hilbert space $\mathcal{U}$ of functions with norm $\|\cdot\|$. 

We parametrize the solution function $u$ via a sufficiently regular parametrization $\hat{u}: \R^p \times \Omega \to \R$ over a time-dependent finite-dimensional parameter vector $\thetab(t) \in \R^p$. 
Critically, we allow the parameter vector $\thetab(t)$ to enter nonlinearly in $\hat{u}$, which is different from many standard approaches in numerical analysis that rely on $\thetab(t)$ entering linearly. For example, standard finite-element methods typically parametrize solution functions $u$ via linear combinations of basis functions centered at grid points, which means that the parameter vector $\thetab(t)$ enters linearly because it corresponds to the coefficients in the linear combination \cite{FEMTheory}. 
In contrast, nonlinear parametrizations such as neural networks, Gaussian wave packets, and tensor networks allow the parameter vector $\thetab(t)$ to enter nonlinearly \cite{Goodfellow-et-al-2016,MAL-059,BECK20001,lubich2008quantum}. 
Using nonlinear parametrizations can be advantageous when numerically solving differential equations over high-dimensional spatial domains and for reduced modeling of transport-dominated problems; see, e.g., \cite{doi:10.1073/pnas.1718942115,P22AMS}.

There is a large class of schemes that uses matrix factorizations as nonlinear parametrizations. Examples are  dynamic low-rank approximations \cite{doi:10.1137/050639703,EINKEMMER2021110353,doi:10.1137/23M1547603,Peherstorfer15aDEIM,CPagAdapt,hesthaven_pagliantini_rozza_2022,doi:10.1137/09076578X,Arnold2014} and dynamic orthogonal decompositions \cite{SAPSIS20092347,NobileDO,MUSHARBASH2018135,doi:10.1137/16M1109394,doi:10.1137/21M1431229,doi:10.1137/22M1534948}. 
We focus in this work on schemes that allow more generic nonlinear parametrizations, which have been developed under the names Neural Galerkin schemes \cite{bruna2024neural,BERMAN2024389}, evolutional neural networks \cite{Du_2021}, nonlinear reduced-order solutions \cite{doi:10.1137/21M1415972}, and others \cite{UngerTransformModes2020,finzi2023a,KAST2024112986}. We refer to \cite{zhang2024sequentialintimetrainingnonlinearparametrizations} for a more detailed discussion about such schemes and additional references. 
All of these methods have in common that they build on the Dirac-Frenkel variational principle \cite{dirac1930note,frenkel1934wave,Kramer1981,lubich2008quantum} for solving for $\hat{u}(\thetab(t), \cdot)$, which leads to the equation
\begin{equation}\label{eq:Intro:DF}
\partial_t \hat{u}(\thetab(t), \cdot) = \mathrm P_{\hat{u}(\thetab(t), \cdot)}f(\hat{u}(\thetab(t), \cdot), \cdot),
\end{equation}
where $\mathrm P_{\hat{u}(\thetab(t), \cdot)}$ is the orthogonal projection of $f(\hat{u}(\thetab(t), \cdot), \cdot)$ onto the space spanned by the component functions of the gradient $\nabla_{\thetab}\hat{u}(\thetab(t), \cdot)$.   
Following collocation approaches used in \cite{bruna2024neural,Du_2021,finzi2023a,KAST2024112986,WEN2024134129}, discretizing \eqref{eq:Intro:DF} in space and using the chain rule $\partial_t \hat{u}(\thetab(t), \cdot) = \nabla_{\thetab}\hat{u}(\thetab(t), \cdot) \dot{\thetab}(t)$ leads to time-dependent least-squares problems of the form
\begin{equation}\label{eq:Intro:LSQDynSys}
    \min_{\etab(t) \in \R^p} \|\bfJ(\thetab(t))\etab(t) - \bff(\thetab(t))\|_2^2,
\end{equation}
where $\etab(t) = \dot{\thetab}(t)$ is the time derivative of $\thetab(t)$. 
The matrix $\bfJ(\thetab(t))$ is the batch gradient of $\hat{u}$ with respect to $\thetab(t)$, i.e., it is the matrix obtained by evaluating the gradient at collocation points $\xb_1, \dots, \xb_n \in \Omega$,
\begin{equation}\label{eq:Intro:BatchJac}
\bfJ(\thetab(t)) = \begin{bmatrix} - & \nabla_{\thetab}\hat{u}(\thetab(t), \xb_1)^{\top} & -\\
 & \vdots & \\
 - & \nabla_{\thetab}\hat{u}(\thetab(t), \xb_n)^{\top} & -
\end{bmatrix} \in \R^{n \times p}.
\end{equation}
The components of the vector-valued right-hand side function 
\[
\bff(\thetab(t)) = [f(\hat{u}(\thetab(t), \xb_1), \cdot), \dots, f(\hat{u}(\thetab(t), \cdot), \xb_n)]^{\top} \in \R^n
\]
are given by evaluating the function $f$ at the collocation points $\xb_1, \dots, \xb_n \in \Omega$.  

If a discretization via collocation is not suitable, one can also start with the system of ordinary differential equations arising from a numerically proper spatial discretization of \eqref{eq:Intro:PDEProblem},
\begin{equation}\label{eq:Intro:HighDimDynSys}
\frac{\mathrm d}{\mathrm dt}\bfu(t) = \bff(\bfu(t)),
\end{equation}
with the vector-valued state function $\bfu: [0, T] \to \R^n$ and the vector-valued right-hand side function $\bff: \R^n \to \R^n$.  
To avoid having to solve for the high-dimensional state function $\bfu$ over time, one can apply an approach motivated by model reduction that parametrizes $\bfu$ as $\hat{\bfu}(\thetab(t))$ with a low-dimensional $\thetab(t) \in \R^p$ with $p < n$; see, e.g., \cite{LEE2020108973,UngerTransformModes2020,berman2024colora,weder2024nonlinearmodelreductionneural}. Plugging the parametrization $\hat{\bfu}$ into \eqref{eq:Intro:HighDimDynSys} and solving for $\thetab(t)$ leads to the same least-squares problem as given in \eqref{eq:Intro:LSQDynSys}, except that the matrix $\Jb(\thetab)$ is different. In any case, the computational task becomes solving time-dependent least-squares problem that have the form \eqref{eq:Intro:LSQDynSys}.

\subsection{Poor conditioning and literature review}
A well know challenge of solving time-dependent least-squares problems \eqref{eq:Intro:LSQDynSys} that arise in the context of the Dirac-Frenkel variational principle and nonlinear parametrizations is that the component functions of the gradient can be numerically linearly dependent so that the corresponding matrix $\bfJ(\thetab(t))$ is poorly conditioned; see \cite{KAY1989165,10.1063/1.449204,PhysRevE.101.023313,feischl2024regularized,zhang2024sequentialintimetrainingnonlinearparametrizations}. 
Several approaches have been introduced in the literature to address this challenge. First, other principles than the Dirac-Frenkel variational principle can be used to find $\thetab(t)$. For example, following a discretize-then-optimize paradigm \cite{kvaal2023need,zhang2024sequentialintimetrainingnonlinearparametrizations} avoids the poorly conditioned least-squares problems altogether; however, it can be computationally expensive to solve the typically nonlinear optimization problems corresponding to discretize-then-optimize schemes \cite{zhang2024sequentialintimetrainingnonlinearparametrizations}.

Second, there is a wide range of works on time integration for specifically dynamic low-rank approximations, which means that the nonlinear parametrizations are given by matrix factorizations. Pioneering work in this area include \cite{lubich2014projector} and \cite{Ceruti2022}, which propose projector-splitting schemes. In particular, we note that the specific matrix factorizations used in dynamic low-rank approximation allow handling issues such as poor conditioning with parametrization-specific formulations and techniques. However, we are interested in schemes that can handle more generic nonlinear parametrizations than matrix factorizations. 

Third, there is an increasing body of literature on regularizing the time-dependent least-squares problem. To the best knowledge of the authors, the work \cite{feischl2024regularized} is the first one that systematically studies the impact of a Tikhonov regularizer on time-dependent least-squares problems in this context. The authors show that Tikhonov regularization helps to avoid poor conditioning and eases time-step size restrictions. In \cite{lubich2025regularizeddynamicalparametricapproximation}, a similar approach based on Tikhonov regularization is presented for implicit time integration schemes.

\subsection{Our approach: Randomized time stepping}
In this work, we follow a different route compared to the literature discussed in the previous section. We introduce and analyze a randomized time stepping scheme that can lead to better conditioned least-squares problems. 
To the best knowledge of the authors, the first randomized time integration scheme in the context of time-dependent least-squares problems was introduced in \cite{berman2024randomized}; however, only empirical results are presented and no analysis is provided. 
In the work \cite{lam2024randomizedlowrankrungekuttamethods}, randomized Runge-Kutta methods are introduced specifically in the context of dynamic low-rank approximations. The methods introduced in \cite{lam2024randomizedlowrankrungekuttamethods} replace the projection step of the Dirac-Frenkel variational principle with a randomized one, which can be interpreted as sketching $\bfJ(\thetab(t))$ from the left. This is starkly different from our approach that applies randomization to sketch the matrix $\bfJ(\thetab(t))$ from the right so that the number of unknowns at each time step is reduced. In other words, the least-squares problems are solved over a random low-dimensional projection of $\dot{\thetab}(t)$ with $m \ll p$ components. As we will show, this leads to speedups compared to updating all $p$ components at every time step. The work  \cite{lindsey2025mne} contributes randomized time stepping and follows a similar approach as \cite{lam2024randomizedlowrankrungekuttamethods}, where the batch gradient is sketched from the left. 
There is a range of other randomized time integration schemes such as \cite{Lie2022} for uncertainty quantification and \cite{Abdulle2020} for chaotic systems; however, in these schemes, randomization serves a different purpose than improving the conditioning of the system. 
Sketching from the right has been used in randomized Newton and trust-region methods to solve intermediate least-squares problems efficiently \cite{gower2019rsn,doi:10.1137/22M1530264,doi:10.1137/22M1524072,romanov2025newton}; however, our goal is to closely follow a trajectory given by a time-dependent PDE and so we cannot leverage that there is convergence to an optimum.

\subsection{Overview and outline}
In \Cref{sec:problem_setup}, we set the stage by stating the fully discrete system and the problem formulation in more detail. 
In \Cref{sec:ng_randomized_time_integration}, we introduce a generic randomized time stepping scheme that builds on randomized increment functions that have bounded variance. In particular, we show that under those conditions the mean-squared error of the estimator at each time step can be controlled via the number of samples that are taken at each time step and the variance of the increment function. If the increment function is unbiased, then in the limit of infinite samples, the mean-squared error vanishes. 
In \Cref{sec:RTSInc}, we derive a randomized time stepping scheme specifically for systems for which the increment function is given by the solution of a least-squares problem. We propose to randomly sketch (randomly project) the unknown $\dot{\thetab}(t)$. We show that the randomized increment function based on sketching (a)  satisfies the conditions required by our randomized time stepping scheme and (b) allows controlling the condition number of the sketched least-squares problem.
Additionally, we stress that the sketching reduces the number of unknowns in the least-squares problem, which means that lower costs per time step are incurred than when using regularization and updating all components of the parameter vector at each time step.
In \Cref{sec:NumExp}, numerical experiments with Neural Galerkin schemes \cite{bruna2024neural,BERMAN2024389} demonstrate that the randomized time stepping helps obtaining accurate approximations, while no regularization fails. 
Additionally, we show that taking larger networks with the same costs in sketching is useful and leads to speedups compared to other Tikhonov regularization that update all components.
Conclusions are drawn in \Cref{sec:Conc}.

\section{Problem setup}\label{sec:problem_setup}
We study the evolution of dynamical systems whose increments are obtained from poorly conditioned least-squares systems.  

\subsection{Dynamical systems}
Building on the setup described in the introduction, let us consider dynamical systems of the form
\begin{equation}\label{eq:dirac_frenkel_least_squares_dynamics}
\dot{\bftheta}(t) = \bfF(\bftheta(t))\,,
\end{equation}
over time $t \in [0, T]$ with increment function $\bfF: \mathbb{R}^p \to \mathbb{R}^p$ defined as
\begin{align}\label{eq:increment_function}
    \Fb(\thetab) = \argmin_{\etab \in \R^p} \nbr{\Jb(\thetab) \etab - \fb(\thetab)}_2^2\,.
\end{align}
The initial condition is $\bftheta(0) = \bftheta_0 \in \mathbb{R}^p$. Recall that in the case of using nonlinear parametrizations of PDE solution functions with collocation, the matrix $\bfJ(\bftheta)$ is the batch gradient defined in \eqref{eq:Intro:BatchJac} and $\fb(\thetab) \in \mathbb{R}^n$ is the right-hand side $f$ of \eqref{eq:Intro:PDEProblem} evaluated at collocation points $\xb_1, \dots, \xb_n$. 
To ensure that the least-squares problem \eqref{eq:increment_function} identifies a unique parameter function $\thetab(t)$, we assume throughout this work that $\rank(\Jb(\thetab)) = p$ for all $\thetab \in \R^p$. 
Furthermore, we assume that $\Fb$ is Lipschitz and grows at most linearly, which we state in the following assumption: 
\begin{assumption}[Lipschitzness and growth of $\Fb$]\label{asm:lipschitzness_Ainvf}
    The function $\Fb: \R^p \to \R^p$ in \eqref{eq:increment_function} is
    \begin{enumerate}[label=(\roman*)]
        \item $L_F$-Lipschitz, \ie there exists $L_F > 0$ such that
    \begin{align}\label{eq:asm:LipschitzOfF}
        \nbr{\Fb(\thetab) - \Fb(\thetab')}_2 \le L_F \nbr{\thetab - \thetab'}_2, \quad \text{ for all } \thetab, \thetab' \in \R^p,
    \end{align}
    \item $B_F$-linear growth, \ie $\nbr{\Fb(\thetab)}_2 \le B_F \nbr{\thetab}_2$ for all $\thetab \in \R^p$.
    \end{enumerate}
\end{assumption}

The linear growth assumption of $\Fb$ is a strong but convenient choice here. The main result given in \Cref{thm:randomized_constraints_convergence_biased} can be extended to affine and polynomial growth, which just leads to different constants in the bounds and which we leave for future work.  

\subsection{Discrete dynamical systems}
Applying explicit Euler time integration with time-step size $\delta t > 0$ to \eqref{eq:dirac_frenkel_least_squares_dynamics} yields a time-discrete dynamical system that evolves over the time interval $[0, T]$ with time-step size $\delta t = T/K$ for a fixed number of equidistant time steps $K \in \N$ over $0 = t_0 < t_1 < \dots < t_K = T$:
\begin{align}\label{eq:discrete_dynamics_setup}
    &\thetab^*_{k+1} = \thetab^*_{k} + \delta t \, \etab_k^*\,,\qquad \etab_k^* = \Fb(\thetab^*_{k}), \qquad k = 0, 1, \dots, K-1,
\end{align}
where the initial condition is $\thetab^*_0 = \thetab_0$ and the increment function $\Fb$ is given in \eqref{eq:increment_function}. Comparing the time-continuous and time-discrete trajectories, classical discretization error analysis for first-order explicit methods (see \eg, \cite[\S 11.3.2]{Quarterioni}) shows under \Cref{asm:lipschitzness_Ainvf} and $\thetab(t)$ twice continuously differentiable with $\Cdisc = \max_{t \in [0,T]} \frac{1}{2} \|\ddot{\thetab}(t)\|_2$ bounded that the error satisfies
\begin{align}\label{eq:discrete_error_bound}
    \nbr{\thetab^*_{k} - \thetab(k\delta t)}_2 \le \Cdisc  \delta t (\exp(L_F k  \delta t)-1)/L_F\,,\qquad k=1,\dots,K\,, 
\end{align}
where  
$L_F$ is the Lipschitz constant of $\Fb$ in \Cref{asm:lipschitzness_Ainvf}. The error \eqref{eq:discrete_error_bound} is typically referred to as the global error \cite{Quarterioni}.

\subsection{Increment functions based on poorly conditioned least-squares problems}\label{sec:ill_conditioned_Jacobian}
A numerical challenge of solving systems \eqref{eq:discrete_dynamics_setup} in the context of time-dependent nonlinear parametrizations and the Dirac-Frenkel variational principle is that the least-squares problem underlying the increment function $\Fb$ can be poorly conditioned,  
\begin{align*}
    \kappa(\Jb(\thetab)) = \sigma_1(\Jb(\thetab))/\sigma_p(\Jb(\thetab)) \gg 1\,,
\end{align*} 
where $\sigma_1(\Jb(\thetab)) \geq \cdots \geq \sigma_p(\Jb(\thetab))$ denote the singular values of $\Jb(\thetab)$; see, e.g., \cite{feischl2024regularized,zhang2024sequentialintimetrainingnonlinearparametrizations}; and also the earlier work \cite{KAY1989165,10.1063/1.449204,PhysRevE.101.023313}. 
In the following, we consider the situation that $\Jb(\thetab)$ is full rank but numerically low-rank with respect to a given precision $0 < \tau \ll 1$, 
\begin{align}\label{eq:numerical_rank}
    \nrank{\tau}{\Jb(\thetab)} = \min\csepp{m \in \sbr{p}}{\nbr{\Jb(\thetab) - \tsvd{\Jb(\thetab)}{m}}_2 < \tau \nbr{\Jb(\thetab)}_2}\,,
\end{align}
where $[p]$ denotes the set $[p] = \{1, \dots, p\}$ and $\tsvd{\Jb(\thetab)}{m}$ the best rank-$m$ approximation of $\Jb(\thetab)$ in the spectral operator 2-norm. 
If $\nrank{\tau}{\Jb(\thetab)} < p$, we refer to $\Jb(\thetab)$ as numerically low-rank. In other words, a matrix $\Jb(\thetab)$ with a low numerical rank is poorly conditioned in the sense that  $\kappa(\Jb(\thetab)) > \tau^{-1} \gg 1$.
This means that when time stepping  \eqref{eq:discrete_dynamics_setup}, even small (numerical) perturbations to $\Jb(\thetab)$ and $\fb(\thetab)$ can be amplified by large factors in the relative errors of the increments given by the least-squares solutions of \eqref{eq:increment_function}, which can accumulate over time.

\subsection{Regularizing dynamics}\label{sec:Problem:RegularizingDynamics}
Regularization can help to circumvent the poor conditioning of the least-squares problems in \eqref{eq:discrete_dynamics_setup}. Time stepping with Tikhonov regularization leads to the system 
\begin{align}\label{eq:discrete_time_det_tikhonov}
    \wh\thetab_{k+1} = \wh\thetab_k + \delta t\ \wh\etab_k,\quad 
    \wh\etab_k = \argmin_{\etab \in \R^p} \nbr{\Jb(\wh\thetab_k)\etab - \fb(\wh\thetab_k)}_2^2 + \alpha \nbr{\etab}_2^2\,,
\end{align}
for time steps $k = 0, 1, \dots, K-1$, $\wh{\thetab}_0 = \thetab_0^*$ and regularization parameter $\alpha \geq 0$. 
The increment $\wh\etab_k$ in \eqref{eq:discrete_time_det_tikhonov} has the analytical representation
\begin{align}\label{eq:discrete_time_det_tikhonov_solution}
    \wh\etab_k = \rbr{\Jb(\wh\thetab_k)^\top \Jb(\wh\thetab_k) + \alpha \Ib_p}^{-1} \Jb(\wh\thetab_k)^\top \fb(\wh\thetab_k).
\end{align}
The key numerical step to obtain $\wh\etab_k$ is the linear solve with the matrix  $\Jb(\wh\thetab_k)^\top \Jb(\wh\thetab_k) + \alpha \Ib_p$ whose condition number is controlled by the regularization parameter $\alpha$. See, e.g., \cite{feischl2024regularized,lubich2025regularizeddynamicalparametricapproximation}, where Tikhonov regularization is introduced and rigorously analyzed in the context of the Dirac-Frenkel variational principle.

Time stepping with truncated SVD regularization takes the form,
\begin{align}\label{eq:discrete_time_det_tsvd}
    \wh\thetab_{k+1} = \wh\thetab_k + \delta t\ \wh\etab_k,\quad 
    \wh\etab_k = \tsvd{\Jb(\wh\thetab_k)}{r}^{\dagger} \fb(\wh\thetab_k),\quad 
    k = 0, 1, \dots, K-1,
\end{align}
with initial condition  $\wh\thetab_0 = \thetab_0^*$. The rank at which the batch gradient $\Jb(\wh\thetab_k)$ is truncated is denoted as $r$. See, e.g., \cite{bruna2024neural} where regularization with truncated SVD is used in the context of the Dirac-Frenkel variational principle.

We stress that regularized systems such as the one obtained with Tikhonov regularization \eqref{eq:discrete_time_det_tikhonov} and truncated SVD \eqref{eq:discrete_time_det_tsvd} solve different dynamics. In particular, there is the trajectory $\wh\thetab_1, \dots, \wh\thetab_K$ corresponding to the regularized discretized system, the time-discrete trajectory $\thetab_1^*, \dots, \thetab_K^*$ of the unregularized but discrete system \eqref{eq:discrete_dynamics_setup}, and the time-continuous trajectory $\thetab(t_1), \dots, \bftheta(t_K)$ of the time-continuous system \eqref{eq:dirac_frenkel_least_squares_dynamics} at the discrete time points $t_1, \dots, t_K$. Using the triangle inequality and the standard error bound \eqref{eq:discrete_error_bound}, we obtain a bound for the global error of the regularized and discretized dynamics compared to the time-continuous trajectory:
\begin{equation}\label{eq:stable_discrete_error_bound}
\|\wh\thetab_k - \thetab(k \delta t)\|_2 \leq \|\wh\thetab_k - \thetab_k^*\|_2 + \Cdisc  \delta t (\exp(L_F k  \delta t)-1)/L_F\,,\quad k = 1, \dots, K\,.
\end{equation}
We refer to the first term on the right-hand side of \eqref{eq:stable_discrete_error_bound} as the regularization error because it is the error introduced by solving a regularized time-discrete system rather than directly the time-discrete system. Recall that solving directly the time-discrete system is challenging in our case because of the poor conditioning of the increment function. The second term in the bound \eqref{eq:stable_discrete_error_bound} is the error due to the time discretization given in \eqref{eq:discrete_error_bound}.

\section{Randomized time stepping}\label{sec:ng_randomized_time_integration}
We now introduce a randomized time stepping scheme that provides estimates of the parameter vectors over time. 
We stress that the main result in this section (\Cref{thm:randomized_constraints_convergence_biased}) applies to general increment functions $\Fb$ that are not necessarily given via least-squares regression problems as the increment function given in \eqref{eq:increment_function}, which motivated this work.

Consider a discrete dynamical system \eqref{eq:discrete_dynamics_setup} with increment function $\Fb$. 
Building on the increment function $\Fb$, we propose randomized time stepping that is initialized with $\wt\thetab_0 = \thetab^*_0$ and evolves a trajectory as
\begin{align}\label{eq:discrete_time_random}
    \wt\thetab_{k+1} = \wt\thetab_k + \delta t\ \wt\etab_k,\quad \wt\etab_k = \frac{1}{q} \sum_{i=1}^{q} \wt{\Fb}(\wt\thetab_k; \Gammab_{k,i}), \quad k = 0, 1, \dots, K-1,
\end{align}
where $\wt{\Fb}: \R^p \times \Qcal \to \R^p$ is a randomized increment function. The randomized increment function takes two inputs. First, the current estimator $\wt\thetab_k$ and, second, an independent random variable $\Gammab_{k,i} \in \Qcal$ that induces the randomness into $\wt{\Fb}$ and that is supported on $\Qcal$. 
The local sample size $q \in \N$ is the number of independent evaluations of $\wt{\Fb}$ over which each increment $\wt\etab_k$ is averaged at each time step $k$. 
We denote the set of random parameters associated with the $k$-th randomized increment $\wt\etab_k$ as $\Gcal_k = \csepp{\Gammab_{k,i}}{i \in [q]}$. 
All random parameters in $\Gcal = \bigcup_{k=0}^{K-1} \Gcal_k$ are drawn $\iid$ from some distribution $P_{\Qcal}$ supported on $\Qcal$. We make the following assumption on the randomized increment function:

\begin{assumption}[$\wt{\Fb}$ with bounded variance and bias]\label{asm:randomized_constraints_biased}
    Given a randomized increment function $\wt{\Fb}: \R^p \times \Qcal \to \R^p$, let 
    \begin{equation}\label{eq:Rand:MuDef}
    \mub(\thetab) = \E_{\Gammab \sim P_{\Qcal}}[\wt{\Fb}(\thetab; \Gammab)]\,,\qquad \thetab \in \R^p\,,
    \end{equation}
   be the mean increment function. Then there exist constants $\Cvar, \Cbias > 0$ such that 
    \begin{align}
        (i)\quad & \nbr{\mub(\thetab) - \Fb(\thetab)}_2^2 \le \Cbias \nbr{\Fb(\thetab)}_2^2,\text{ and} & \qquad\qquad \label{asm:i}\\
        (ii)\quad & \E_{\Gammab \sim P_{\Qcal}}\sbr{\nbr{\wt{\Fb}(\thetab; \Gammab) - \mub(\thetab)}_2^2} \le \Cvar \nbr{\Fb(\thetab)}_2^2, & \qquad\qquad\label{asm:ii}
    \end{align}
    for all $\thetab \in \R^p$.
\end{assumption}

Given a randomized increment function $\wt{\Fb}$ with bounded bias and variance as in \Cref{asm:randomized_constraints_biased}, we obtain the following error bounds for the solutions obtained with the randomized time stepping:
\begin{theorem}\label{thm:randomized_constraints_convergence_biased}
    Let the trajectory $\wt\thetab_1, \dots, \wt\thetab_K$ be given by the randomized time stepping  \eqref{eq:discrete_time_random} and let it be bounded as $\max_{0 \le k \le K}\|\wt\thetab_k\|_2 \leq B_\theta$ almost surely. 
    Under Assumptions~\ref{asm:lipschitzness_Ainvf} and \ref{asm:randomized_constraints_biased}, when taking $0 < \delta t \le 1/(4L_F)$, the error is bounded as 
    \begin{align}\label{eq:rand_convergence_step_biased}
    \begin{split}
        &\E_{\bigcup_{j=0}^{K-1} \Gcal_j}\sbr{\nbr{\wt\thetab_{K} - \thetab^*_{K}}_2} \le \rbr{\E_{\bigcup_{j=0}^{K-1} \Gcal_j}\sbr{\nbr{\wt\thetab_{K} - \thetab^*_{K}}_2^2}}^{1/2} \\
        &\le \sqrt{2}B_F B_\theta \Bigg(\underbrace{\sqrt{\frac{\Cvar}{q} T \rbr{1 + 3 L_F T  \mathrm e^{4 L_F T}} \delta t}}_{\t{vanishing variance}} + \underbrace{\sqrt{\Cbias T \mathrm e^{4 L_F T} \rbr{\frac{2}{L_F} + 3 \delta t}}}_{\t{cumulative + vanishing bias}} \Bigg) \,.
    \end{split}
    \end{align}
    
\end{theorem}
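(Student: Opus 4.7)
The plan is to bound $u_k \dfeq \E\nbr{\ebd_k}_2^2$ with $\ebd_k \dfeq \wt\thetab_k - \thetab^*_k$ by deriving a one-step recursion and then unrolling it by a discrete Gronwall argument; the first inequality in \eqref{eq:rand_convergence_step_biased} is then just Jensen's inequality. Introduce the filtration $\Fcal_k \dfeq \sigma(\Gcal_0,\ldots,\Gcal_{k-1})$, so that $\wt\thetab_k$ is $\Fcal_k$-measurable while $\Gcal_k$ is independent of $\Fcal_k$. The key decomposition is $\wt\etab_k - \etab^*_k = V_k + B_k + D_k$, where $V_k \dfeq \wt\etab_k - \mub(\wt\thetab_k)$ is the centered random part, $B_k \dfeq \mub(\wt\thetab_k) - \Fb(\wt\thetab_k)$ is the bias, and $D_k \dfeq \Fb(\wt\thetab_k) - \Fb(\thetab^*_k)$ is the Lipschitz drift.

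Expanding $\nbr{\ebd_{k+1}}_2^2 = \nbr{\ebd_k + \delta t(V_k+B_k+D_k)}_2^2$ and taking $\E[\,\cdot\mid\Fcal_k]$ eliminates every cross term involving $V_k$ because $\E[V_k\mid\Fcal_k]=0$. Independence of the $q$ \iid samples $\cbr{\Gammab_{k,i}}_{i=1}^q$ combined with \Cref{asm:randomized_constraints_biased}(ii) yields the variance reduction $\E[\nbr{V_k}_2^2\mid\Fcal_k] \le (\Cvar/q)\nbr{\Fb(\wt\thetab_k)}_2^2$. Using \Cref{asm:lipschitzness_Ainvf}(i) on $D_k$, \Cref{asm:randomized_constraints_biased}(i) on $B_k$, and \Cref{asm:lipschitzness_Ainvf}(ii) together with $\nbr{\wt\thetab_k}_2\le B_\theta$ to get $\nbr{\Fb(\wt\thetab_k)}_2 \le B_F B_\theta$, a Cauchy--Schwarz plus weighted Young inequality on $\inpr{\ebd_k}{B_k}$ (with the Young weight tuned against the step-size restriction $\delta t \le 1/(4L_F)$, so the $\delta t^2 L_F^2 \nbr{\ebd_k}_2^2$ contribution from $\nbr{D_k}_2^2$ is absorbed into an $O(\delta t L_F)$ term) produces a recursion of the form
\begin{align*}
u_{k+1} \le (1 + 4L_F \delta t)\, u_k + c_1 \Cbias B_F^2 B_\theta^2\, \delta t + c_2 \rbr{\Cbias + \Cvar/q} B_F^2 B_\theta^2\, \delta t^2
\end{align*}
with explicit numerical constants $c_1, c_2$ set by the Young weight.

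Unrolling from $u_0=0$, bounding $(1+4L_F\delta t)^{K-1-j} \le \mathrm e^{4L_F T}$ pointwise, and using $K\delta t = T$ to sum the $K$ per-step residuals produces a bound of the form
\begin{align*}
u_K \le B_F^2 B_\theta^2\, \mathrm e^{4L_F T}\sbr{\wt c_1\,\frac{\Cbias T}{L_F} + \wt c_2\,\Cbias T \delta t + \wt c_3\,\frac{\Cvar}{q} T \delta t}\,,
\end{align*}
and $\sqrt{a+b}\le\sqrt a+\sqrt b$ together with pulling out the shared $\sqrt{2}B_F B_\theta$ factor recovers the grouped variance and bias square roots in \eqref{eq:rand_convergence_step_biased}. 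The main obstacle I expect is pure constant bookkeeping, specifically choosing the Young weight together with $\delta t \le 1/(4L_F)$ so that the amplification factor is exactly $(1+4L_F\delta t)$, and so that the non-vanishing $2/L_F$ piece of the bias (produced by the $\delta t$-order Young residual over $K$ steps) separates cleanly from the $3\delta t$ discretization correction (produced by the $\delta t^2$-order terms $\nbr{B_k}_2^2$ and $\inpr{\ebd_k}{B_k}$ corrections after a final application of $K\delta t = T$). The structural ingredients---decomposition, conditional centering of $V_k$, $1/q$ variance reduction, and the Gronwall sum---are standard.
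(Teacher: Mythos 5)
Your approach is structurally sound but genuinely different from the paper's, and the difference matters for recovering the exact bound \eqref{eq:rand_convergence_step_biased}. You run a direct one-step stochastic-Gronwall recursion on $u_k = \E\|\wt\thetab_k - \thetab_k^*\|_2^2$, absorbing all three contributions---variance $V_k$, bias $B_k$, and Lipschitz drift $D_k$---into a single recursion $u_{k+1} \le (1+4L_F\delta t)u_k + \text{(residuals)}$ and then unrolling. The paper instead writes $\wt\thetab_K - \thetab_K^* = \delta t \sum_{j<K}(\wt\etab_j - \etab_j^*)$ \emph{globally}, splits once into $\sum_j(\wt\etab_j - \mub(\wt\thetab_j))$ and $\sum_j(\mub(\wt\thetab_j) - \etab_j^*)$, and bounds the first sum as a \emph{martingale}: the cross terms vanish by orthogonality, so the variance accumulates linearly as $\frac{C_v}{q}B_F^2 B_\theta^2 K$ with \emph{no} exponential amplification. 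Only the bias sum is passed through a Gronwall-type recursion (via an intermediate trajectory $\wt\zetab_k$). This is what produces the factor $1 + 3L_F T\,\mathrm e^{4L_F T}$ on the variance in the theorem, where the ``$1$'' is the un-amplified martingale piece and the $3L_F T\,\mathrm e^{4L_F T}$ is variance that leaks into the bias Gronwall loop through the intermediate trajectory.

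Your one-step recursion cannot separate these contributions: the $\delta t^2\E[\|V_k\|_2^2\mid\Fcal_k]$ term is added at every step and then multiplied by $(1+4L_F\delta t)^{K-1-j}$ when unrolling, so the variance piece of $u_K$ comes out as $\wt c_3\,\mathrm e^{4L_F T}\frac{C_v}{q}T\delta t$ rather than the paper's $\frac{C_v}{q}T(1 + 3L_F T\,\mathrm e^{4L_F T})\delta t$. No choice of absolute constant $\wt c_3$ makes these coincide, since they have different $L_F T$-dependence ($\mathrm e^{4L_F T}$ versus $1 + 3L_F T\,\mathrm e^{4L_F T}$, and neither function dominates the other across the whole range of $L_F T$). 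So the final step of your plan---claiming $\sqrt{a+b}\le\sqrt a + \sqrt b$ recovers the grouped square roots in \eqref{eq:rand_convergence_step_biased}---does not quite go through: you would instead prove a bound of the same qualitative shape (variance $\propto \sqrt{\delta t/q}$, non-vanishing bias $\propto \sqrt{C_b}$, exponential Gronwall prefactor) but with a different variance prefactor. Everything else you describe---the filtration argument, the vanishing cross terms under $\E[\cdot\mid\Fcal_k]$, the $1/q$ variance reduction from the $q$-sample average, the Young-weight tuning against $\delta t \le 1/(4L_F)$---is correct and would produce a clean alternative bound. To recover the theorem's exact constants you would need to adopt the paper's global decomposition and treat the variance sum as a martingale rather than folding it into the per-step recursion.
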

The first term in the brackets in  \eqref{eq:rand_convergence_step_biased} comes from the variance of the randomized increment function $\wt{\Fb}$ in \Cref{asm:randomized_constraints_biased}(ii), while the second term is the result of a combination of the bias of $\wt{\Fb}$ compared to the true increment function $\Fb$ in \Cref{asm:randomized_constraints_biased}(i) and the variance. The variance terms  vanish as $\delta t \to 0$, the second term, representing the accumulated error from the biased increment function $\wt\Fb$ over the entire trajectory, can remain positive regardless of the time-step size $\delta t$. This is analogous to the non-vanishing error introduced by the deterministic regularization approaches discussed in \Cref{sec:Problem:RegularizingDynamics}. 

\begin{proof}[Proof of \Cref{thm:randomized_constraints_convergence_biased}]
    First, recall from \eqref{eq:discrete_time_random} that 
    each randomized increment $\wt\etab_k$ satisfies
    \begin{align}\label{eq:pf_mean_random_increment}
        \E_{\Gcal_k}[\wt\etab_k \mid \wt\thetab_k] = \E_{\Gcal_k}\ssepp{\frac{1}{q} \sum_{i=1}^{q} \wt{\Fb}(\wt\thetab_k; \Gammab_{k,i})}{\wt\thetab_k}
        = \frac{1}{q} \sum_{i=1}^{q} \E_{\Gammab_{k,i}\sim P_{\Qcal}}[\wt{\Fb}(\wt\thetab_k; \Gammab_{k,i}) \mid \wt\thetab_k]
        = \mub(\wt\thetab_k)
    \end{align}
    where $\mub(\thetab)$ defined in \eqref{eq:Rand:MuDef} denotes the mean of the increment function for a fixed $\thetab$ and 
    $\Gcal_k$ denotes the set of $q$ \iid random variables $\Gammab_{k,i} \sim P_{\Qcal}$ at the $k$-th step. 
    We now make the following variance-bias decomposition through the mean increment function $\mub(\wt\thetab_k)$:
    \begin{align}\label{eq:variance_bias_decomposition}
    \begin{split}
        &\E_{\bigcup_{j=0}^{k-1} \Gcal_j}\sbr{\nbr{\sum_{j=0}^{k-1}\wt\etab_j - \etab^*_j}_2^2} 
        = \E_{\bigcup_{j=0}^{k-1} \Gcal_j}\sbr{\nbr{\sum_{j=0}^{k-1}\wt\etab_j - \mub(\wt\thetab_j) + \mub(\wt\thetab_j) - \etab^*_j}_2^2} \\
        \leq &2\underbrace{\E_{\bigcup_{j=0}^{k-1} \Gcal_j}\sbr{\nbr{\sum_{j=0}^{k-1}\wt\etab_j - \mub(\wt\thetab_j)}_2^2}}_{\t{variance}} + 2\underbrace{\E_{\bigcup_{j=0}^{k-2} \Gcal_j}\sbr{\nbr{\sum_{j=0}^{k-1}\mub(\wt\thetab_j) - \etab^*_j}_2^2}}_{\t{bias}}.
    \end{split}
    \end{align} 
    Notice that the bias term for index $k - 1$ in \eqref{eq:variance_bias_decomposition} is independent of $\Gcal_{k - 1}$ and thus it was dropped in the subscript of the expectation.

    \emph{Bounding the variance}: 
    By \Cref{asm:randomized_constraints_biased}, for each $k = 0, 1, \dots, K-1$, we have
    \begin{align*}
        \E_{\Gcal_k}\sbr{\nbr{\wt\etab_k - \mub(\wt\thetab_k)}_2^2}
        = &\E_{\Gcal_k}\sbr{\nbr{\frac{1}{q} \sum_{i=1}^{q} \wt{\Fb}(\wt\thetab_k; \Gammab_{k,i}) - \mub(\wt\thetab_k)}_2^2} \\
        = &\frac{1}{q} \E_{\Gammab_{k,i} \sim P_{\Qcal}}\sbr{\nbr{\wt{\Fb}(\wt\thetab_k; \Gammab_{k,i}) - \mub(\wt\thetab_k)}_2^2} \\
        \le &\frac{\Cvar}{q} \nbr{\Fb(\wt\thetab_k)}_2^2\,,
    \end{align*}
    where the second equality holds because $\wt{\Fb}(\wt\thetab_k; \Gammab_{k,i}) - \mub(\wt\thetab_k)$ for $i = 1, \dots q$ are independent and have zero mean. 
    Therefore, the linear growth of $\Fb$ (see \Cref{asm:lipschitzness_Ainvf}) and $\|\wt\thetab_k\|_2$ together implies that 
    \begin{align}\label{eq:pf_variance_one_step}
        \E_{\Gcal_k}\sbr{\nbr{\wt\etab_k - \mub(\wt\thetab_k)}_2^2} \le \frac{\Cvar}{q} \nbr{\Fb(\wt\thetab_k)}_2^2 \le \frac{\Cvar}{q} B_F^2 \nbr{\wt\thetab_k}_2^2 \le \frac{\Cvar}{q} B_F^2 B_\theta^2.
    \end{align}
    Then, we can bound the variance by induction. For the base case, \eqref{eq:pf_variance_one_step} implies that
    \begin{align*}
        \E_{\Gcal_0}\sbr{\nbr{\wt\etab_0 - \mub(\wt\thetab_0)}_2^2} \le \frac{\Cvar}{q} B_F^2 B_\theta^2.
    \end{align*}
    Now suppose $\E_{\bigcup_{j=0}^{k-2} \Gcal_j}\sbr{\nbr{\sum_{j=0}^{k-2}\wt\etab_j - \mub(\wt\thetab_j)}_2^2} \le \frac{\Cvar}{q} B_F^2 B_\theta^2 (k-1)$. For any $k=2,\cdots,K$,
    \begin{align*}
        &\E_{\bigcup_{j=0}^{k-1} \Gcal_j}\sbr{\nbr{\sum_{j=0}^{k-1}\wt\etab_j - \mub(\wt\thetab_j)}_2^2} \\
        = &\E_{\bigcup_{j=0}^{k-1} \Gcal_j}\sbr{\nbr{\rbr{\wt\etab_{k-1} - \mub(\wt\thetab_{k-1})} + \rbr{\sum_{j=0}^{k-2}\wt\etab_j - \mub(\wt\thetab_j)}}_2^2} \\
        = &\E_{\bigcup_{j=0}^{k-1} \Gcal_j}\sbr{\nbr{\wt\etab_{k-1} - \mub(\wt\thetab_{k-1})}_2^2} + \E_{\bigcup_{j=0}^{k-2} \Gcal_j}\sbr{\nbr{\sum_{j=0}^{k-2}\wt\etab_j - \mub(\wt\thetab_j)}_2^2},
    \end{align*}
    where the cross term vanishes because according to \eqref{eq:pf_mean_random_increment},
    \begin{align*}
        \E_{\bigcup_{j=0}^{k-1} \Gcal_j}\sbr{\wt\etab_{k-1} - \mub(\wt\thetab_{k-1})} = \E_{\bigcup_{j=0}^{k-2} \Gcal_j}\sbr{\E_{\Gcal_{k-1}}\ssepp{\wt\etab_{k-1}}{\wt\thetab_{k-1}} - \mub(\wt\thetab_{k-1})} = 0.
    \end{align*}
    Combining the induction hypothesis and \eqref{eq:pf_variance_one_step} then leads to an upper bound for the variance term in \eqref{eq:variance_bias_decomposition}:
    \begin{align}\label{eq:pf_variance_bound}
        \E_{\bigcup_{j=0}^{k-1} \Gcal_j}\sbr{\nbr{\sum_{j=0}^{k-1}\wt\etab_j - \mub(\wt\thetab_j)}_2^2} \le \frac{\Cvar}{q} B_F^2 B_\theta^2 + \frac{\Cvar}{q} B_F^2 B_\theta^2 (k-1) = \frac{\Cvar}{q} B_F^2 B_\theta^2 k.
    \end{align}

    \emph{Bounding the bias}:
    For all $k = 1,\cdots,K$, let $\wt\zetab_k = \thetab^*_0 + \delta t \sum_{j=0}^{k-1} \mub(\wt\thetab_j)$.
    We denote
    \begin{align}\label{eq:pf_bias_decomposition_2}
    \begin{split}
        \wb_{k-1} = &\mub(\wt\thetab_{k-1}) - \etab^*_{k-1} \\
        = &\rbr{\mub(\wt\thetab_{k-1}) - \Fb(\wt\thetab_{k-1})} + \rbr{\Fb(\wt\thetab_{k-1}) - \Fb(\wt\zetab_{k-1})} + \rbr{\Fb(\wt\zetab_{k-1}) - \Fb(\thetab^*_{k-1})}
    \end{split}
    \end{align}
    and decompose the bias as follows:
    \begin{align}\label{eq:pf_bias_decomposition}
    \begin{split}
        &\E_{\bigcup_{j=0}^{k-2} \Gcal_j}\sbr{\nbr{\sum_{j=0}^{k-1}\mub(\wt\thetab_j) - \etab^*_j}_2^2} \\
        = &\E_{\bigcup_{j=0}^{k-2} \Gcal_j}\sbr{\nbr{\wb_{k-1} + \rbr{\sum_{j=0}^{k-2}\mub(\wt\thetab_j) - \etab^*_j}}_2^2} \\
        \le &\E_{\bigcup_{j=0}^{k-2} \Gcal_j}\sbr{\nbr{\wb_{k-1}}_2^2} + 2 \E_{\bigcup_{j=0}^{k-2} \Gcal_j}\sbr{\nbr{\wb_{k-1}}_2} \E_{\bigcup_{j=0}^{k-3} \Gcal_j}\sbr{\nbr{\sum_{j=0}^{k-2}\mub(\wt\thetab_j) - \etab^*_j}_2} \\
        &+ \E_{\bigcup_{j=0}^{k-3} \Gcal_j}\sbr{\nbr{\sum_{j=0}^{k-2}\mub(\wt\thetab_j) - \etab^*_j}_2^2} \\
        \le &E_{\bigcup_{j=0}^{k-2} \Gcal_j}\sbr{\nbr{\wb_{k-1}}_2^2} + 2 \E_{\bigcup_{j=0}^{k-2} \Gcal_j}\sbr{\nbr{\wb_{k-1}}_2} \sqrt{\E_{\bigcup_{j=0}^{k-3} \Gcal_j}\sbr{\nbr{\sum_{j=0}^{k-2}\mub(\wt\thetab_j) - \etab^*_j}_2^2}} \\
        &+ \E_{\bigcup_{j=0}^{k-3} \Gcal_j}\sbr{\nbr{\sum_{j=0}^{k-2}\mub(\wt\thetab_j) - \etab^*_j}_2^2},
    \end{split}
    \end{align}
    where the second last inequality follows from the Cauchy-Schwarz inequality, and the last inequality follows from Jensen's inequality.
    Now, we first leverage the decomposition of $\wb_{k-1}$ in \eqref{eq:pf_bias_decomposition_2} to bound 
    \begin{align*}
        &\E_{\bigcup_{j=0}^{k-2} \Gcal_j}\sbr{\nbr{\wb_{k-1}}_2^2} \\
        \le &3 \E_{\bigcup_{j=0}^{k-2} \Gcal_j}\sbr{\nbr{\mub(\wt\thetab_{k-1}) - \Fb(\wt\thetab_{k-1})}_2^2 + \nbr{\Fb(\wt\thetab_{k-1}) - \Fb(\wt\zetab_{k-1})}_2^2 + \nbr{\Fb(\wt\zetab_{k-1}) - \Fb(\thetab^*_{k-1})}_2^2}.
    \end{align*}
    For the first term, the assumption on the bias in \Cref{asm:randomized_constraints_biased}, along with the boundedness of $\Fb$ (\Cref{asm:lipschitzness_Ainvf}(ii)) and the boundedness of $\wt\thetab_{k-1}$ as stated in the theorem, implies that
        \begin{align}\label{eq:pf_bias_bound_1}
            \E_{\bigcup_{j=0}^{k-2} \Gcal_j}\sbr{\nbr{\mub(\wt\thetab_{k-1}) - \Fb(\wt\thetab_{k-1})}_2^2} \le \Cbias \E_{\bigcup_{j=0}^{k-2} \Gcal_j}\sbr{\nbr{\Fb(\wt\thetab_{k-1})}_2^2} \le \Cbias B_F^2 B_\theta^2.
        \end{align}
       For the second term, we can apply the Lipschitzness of $\Fb$ in \Cref{asm:lipschitzness_Ainvf} to obtain
        \begin{align}\label{eq:pf_bias_bound_2}
        \begin{split}
            \E_{\bigcup_{j=0}^{k-2} \Gcal_j}\sbr{\nbr{\Fb(\wt\thetab_{k-1}) - \Fb(\wt\zetab_{k-1})}_2^2} 
            \le &\E_{\bigcup_{j=0}^{k-2} \Gcal_j}\sbr{L_F^2 \nbr{\wt\thetab_{k-1} - \wt\zetab_{k-1}}_2^2} \\
            = &\E_{\bigcup_{j=0}^{k-2} \Gcal_j}\sbr{L_F^2 \delta t^2 \nbr{\sum_{j=0}^{k-2} \wt\etab_j - \mub(\wt\thetab_j)}_2^2} \\
            \le &L_F^2 \delta t^2 \frac{\Cvar}{q} B_F^2 B_\theta^2 (k-1),
        \end{split}
        \end{align}
        where the last inequality follows from \eqref{eq:pf_variance_bound}.
        For the third term, we again apply the Lipschitzness of $\Fb$ in \Cref{asm:lipschitzness_Ainvf}:
        \begin{align}\label{eq:pf_bias_bound_3}
        \begin{split}
            \E_{\bigcup_{j=0}^{k-2} \Gcal_j}\sbr{\nbr{\Fb(\wt\zetab_{k-1}) - \Fb(\thetab^*_{k-1})}_2^2}
            \le &\E_{\bigcup_{j=0}^{k-2} \Gcal_j}\sbr{L_F^2 \nbr{\wt\zetab_{k-1} - \thetab^*_{k-1}}_2^2} \\
            = &L_F^2 \delta t^2 \E_{\bigcup_{j=0}^{k-3} \Gcal_j}\sbr{\nbr{\sum_{j=0}^{k-2} \mub(\wt\thetab_j) - \etab^*_j}_2^2}\,,
        \end{split}
        \end{align}
where the last equality shows that the expectation actually depends only $\bigcup_{j=0}^{k-3} \Gcal_j$ up to index $k - 3$ rather than $k - 2$. 
   Overall, we have 
    \begin{align}\label{eq:pf_bias_bound_4}
    \begin{split}
        \E_{\bigcup_{j=0}^{k-2} \Gcal_j}\sbr{\nbr{\wb_{k-1}}_2^2} 
        \le &3 B_F^2 B_\theta^2 \rbr{\Cbias + L_F^2 \delta t^2 \frac{\Cvar}{q} (k-1)} \\
        &+ 3 L_F^2 \delta t^2 \E_{\bigcup_{j=0}^{k-3} \Gcal_j}\sbr{\nbr{\sum_{j=0}^{k-2} \mub(\wt\thetab_j) - \etab^*_j}_2^2}\,.
    \end{split}
    \end{align}

    By Jensen's inequality, we can also bound $\E_{\bigcup_{j=0}^{k-2} \Gcal_j}\sbr{\nbr{\wb_{k-1}}_2}$ based on the above:
    \begin{align}\label{eq:pf_bias_bound_5}
    \begin{split}
        &\E_{\bigcup_{j=0}^{k-2} \Gcal_j}\sbr{\nbr{\wb_{k-1}}_2} \\
        \le &\E_{\bigcup_{j=0}^{k-2} \Gcal_j}\sbr{\nbr{\mub(\wt\thetab_{k-1}) - \Fb(\wt\thetab_{k-1})}_2 + \nbr{\Fb(\wt\thetab_{k-1}) - \Fb(\wt\zetab_{k-1})}_2 + \nbr{\Fb(\wt\zetab_{k-1}) - \Fb(\thetab^*_{k-1})}_2} \\
        \le &\sqrt{\Cbias} B_F B_\theta + L_F \delta t B_F B_\theta \sqrt{\frac{\Cvar}{q}(k-1)} + L_F \delta t \sqrt{\E_{\bigcup_{j=0}^{k-3} \Gcal_j}\sbr{\nbr{\sum_{j=0}^{k-2} \mub(\wt\thetab_j) - \etab^*_j}_2^2}}.
    \end{split}
    \end{align}
    Then, plugging in \eqref{eq:pf_bias_bound_4} and \eqref{eq:pf_bias_bound_5} to \eqref{eq:pf_bias_decomposition} gives
    \begin{align*}
        &\E_{\bigcup_{j=0}^{k-2} \Gcal_j}\sbr{\nbr{\sum_{j=0}^{k-1}\mub(\wt\thetab_j) - \etab^*_j}_2^2} 
        \le 3 B_F^2 B_\theta^2 \rbr{\Cbias + L_F^2 \delta t^2 \frac{\Cvar}{q} (k-1)} \\
        &+ 2 \rbr{\sqrt{\Cbias} B_F B_\theta + L_F \delta t B_F B_\theta \sqrt{\frac{\Cvar}{q}(k-1)}} \sqrt{\E_{\bigcup_{j=0}^{k-3} \Gcal_j}\sbr{\nbr{\sum_{j=0}^{k-2} \mub(\wt\thetab_j) - \etab^*_j}_2^2}} \\
        &+ \rbr{1 + 2 L_F \delta t + 3 L_F^2 \delta t^2} \E_{\bigcup_{j=0}^{k-3} \Gcal_j}\sbr{\nbr{\sum_{j=0}^{k-2} \mub(\wt\thetab_j) - \etab^*_j}_2^2}\,.
    \end{align*}
    We further obtain because $2ab \le \lambda a^2 + b^2/\lambda$ holds for any $a,b \in \R, \lambda > 0$ that
    \begin{align*}
        &2 \rbr{\sqrt{\Cbias} B_F B_\theta + L_F \delta t B_F B_\theta \sqrt{\frac{\Cvar}{q}(k-1)}} \sqrt{\E_{\bigcup_{j=0}^{k-3} \Gcal_j}\sbr{\nbr{\sum_{j=0}^{k-2} \mub(\wt\thetab_j) - \etab^*_j}_2^2}} \\
        \le &L_F \delta t \E_{\bigcup_{j=0}^{k-3} \Gcal_j}\sbr{\nbr{\sum_{j=0}^{k-2} \mub(\wt\thetab_j) - \etab^*_j}_2^2} + B_F^2 B_\theta^2 \rbr{\sqrt{\frac{\Cbias}{L_F \delta t}} + \sqrt{L_F \delta t \frac{\Cvar}{q}(k-1)}}^2
    \end{align*}
    holds for $\lambda = L_F \delta t$. 
    Let $E_{k} = \mathbb{E}_{\bigcup_{j=0}^{k-2} \Gcal_j}\sbr{\nbr{\sum_{j=0}^{k-1}\mub(\wt\thetab_j) - \etab^*_j}_2^2}$. When taking $0 < \delta t \le 1/4L_F$, the above then leads to $E_{k} \le \alpha + \beta E_{k-1}$ where
    \begin{align}
    \begin{split}
        \alpha = &B_F^2 B_\theta^2 \rbr{\frac{\Cbias}{L_F \delta t} + 2 L_F T \frac{\Cvar}{q} + 3 \Cbias + 2\sqrt{\Cbias \frac{\Cvar}{q} K}} \\
        \ge &B_F^2 B_\theta^2 \rbr{\frac{\Cbias}{L_F \delta t} + L_F \delta t (1 + 3 L_F \delta t) \frac{\Cvar}{q} (k-1) + 3 \Cbias + 2\sqrt{\Cbias \frac{\Cvar}{q} (k-1)}}, \\
        \beta = &1 + 4 L_F \delta t \ge 1 + 3 L_F \delta t (1 + L_F \delta t)\,.
    \end{split}
    \end{align}
    Since $E_1 = \E_{\Gcal_0}[\|\mub(\wt\thetab_0) - \etab^*_0\|_2^2] \le \Cbias B_F^2 B_\theta^2 \le \alpha$ by \Cref{asm:randomized_constraints_biased}, along with the boundedness of $F(\wt\thetab_0)$, we have by induction that 
    \begin{align}\label{eq:Proof:Kalphabetabound}
    \begin{split}
        E_k \le &\alpha + \beta E_{k-1} \le \alpha \sum_{j=0}^{k-1} \beta^j = \alpha \frac{\beta^k - 1}{\beta - 1} 
        \le \alpha \frac{k \beta^{k-1} (\beta-1)}{\beta -1}
        \le k \alpha \beta^{k-1}.
    \end{split}
    \end{align}
    Note that the constant $\beta$ depends on the time-step size $\delta t$ and thus the last inequality in \eqref{eq:Proof:Kalphabetabound} brings that dependence to the numerator. 
    Overall we obtain that when $0 < \delta t \le 1/4L_F$, the bias term in \eqref{eq:variance_bias_decomposition} can be bounded as
    \begin{align}\label{eq:pf_bias_bound}
    \begin{split}
        &E_{\bigcup_{j=0}^{k-2} \Gcal_j}\sbr{\nbr{\sum_{j=0}^{k-1}\mub(\wt\thetab_j) - \etab^*_j}_2^2} \le k \alpha \rbr{1 + 4 L_F \delta t}^{k-1} \le k \alpha e^{4 L_F k \delta t}.
    \end{split}
    \end{align}

    \emph{Bounding global error}: 
    For any $k=1, \cdots, K$, we observe that 
    \begin{align*}
        \wt\thetab_{k} - \thetab^*_{k} = \rbr{\thetab^*_0 + \delta t \sum_{j=0}^{k-1} \wt\etab_j} - \rbr{\thetab^*_0 + \delta t \sum_{j=0}^{k-1} \etab^*_j} = \delta t \sum_{j=0}^{k-1} \wt\etab_j - \etab^*_j.
    \end{align*}
    Then, plugging the variance and bias bounds, \eqref{eq:pf_variance_bound} and \eqref{eq:pf_bias_bound}, in \eqref{eq:variance_bias_decomposition} implies that
    \begin{align}\label{eq:pf_stepwise_error}
    \begin{split}
        &\E_{\bigcup_{j=0}^{k-1} \Gcal_j}\sbr{\nbr{\wt\thetab_{k} - \thetab^*_{k}}_2^2} 
        = \delta t^2 \E_{\bigcup_{j=0}^{k-1} \Gcal_j}\sbr{\nbr{\sum_{j=0}^{k-1}\wt\etab_j - \etab^*_j}_2^2} 
        \le 2 k \delta t^2 \rbr{\frac{\Cvar}{q} B_F^2 B_\theta^2 + \alpha e^{4 L_F k \delta t}} \\
        &\le 2 k \delta t^2 B_F^2 B_\theta^2 \rbr{\frac{\Cvar}{q} + e^{4 L_F k \delta t} \rbr{\frac{\Cbias}{L_F \delta t} + 2 L_F T \frac{\Cvar}{q} + 3 \Cbias + 2\sqrt{\Cbias \frac{\Cvar}{q} K}}} \\
        &\le 2 k \delta t^2 B_F^2 B_\theta^2 \rbr{\frac{\Cvar}{q} + e^{4 L_F k \delta t} \rbr{\frac{2 \Cbias}{L_F \delta t} + 3 L_F T \frac{\Cvar}{q} + 3 \Cbias}},
    \end{split}
    \end{align}
    where the last inequality holds because $2\sqrt{\Cbias \frac{\Cvar}{q} K} \le \frac{\Cbias}{L_F \delta t} + \frac{\Cvar}{q} L_F T$. 
\end{proof}

\begin{corollary}\label{cor:UnbiasedCase}
Let us consider the same setup as in \Cref{thm:randomized_constraints_convergence_biased} except that the increment function $\wt\Fb$ is unbiased in the sense that $C_b = 0$ in \Cref{asm:randomized_constraints_biased}. Then, the error is bounded as 
\[
\E_{\bigcup_{j=0}^{K-1} \Gcal_j}\sbr{\nbr{\wt\thetab_{K} - \thetab^*_{K}}_2} 
        \le \sqrt{2} B_F B_{\theta}\sqrt{\frac{C_v}{q}T(1 + 3L_F T \mathrm e^{4 L_F T})\delta t}\,.
\]
\end{corollary}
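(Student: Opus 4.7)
The plan is to deduce the corollary as a direct specialization of \Cref{thm:randomized_constraints_convergence_biased} by tracking how the hypothesis $\Cbias = 0$ collapses the bias-driven contributions in the bound \eqref{eq:rand_convergence_step_biased}. Since \Cref{asm:randomized_constraints_biased} with $\Cbias = 0$ forces $\mub(\thetab) = \Fb(\thetab)$ for every $\thetab$, \Cref{thm:randomized_constraints_convergence_biased} applies verbatim to the randomized scheme, and inspecting the right-hand side of \eqref{eq:rand_convergence_step_biased} shows that the second summand inside the bracket is a positive multiple of $\sqrt{\Cbias}$ and therefore vanishes identically. What remains is precisely the claimed variance-only bound, so the entire argument reduces to substitution.

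If I wanted an independent derivation rather than a citation, I would revisit the proof of \Cref{thm:randomized_constraints_convergence_biased} and verify where the bias enters: in the decomposition \eqref{eq:pf_bias_decomposition_2} of $\wb_{k-1}$, the first summand $\mub(\wt\thetab_{k-1}) - \Fb(\wt\thetab_{k-1})$ is zero pointwise, making \eqref{eq:pf_bias_bound_1} trivial. The remaining two summands $\Fb(\wt\thetab_{k-1}) - \Fb(\wt\zetab_{k-1})$ and $\Fb(\wt\zetab_{k-1}) - \Fb(\thetab^*_{k-1})$ are still nonzero and are the mechanism by which the Lipschitz constant $L_F$ propagates the stepwise variance \eqref{eq:pf_variance_bound} through the trajectory. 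Rerunning the induction with $\Cbias = 0$ would yield the same recursion $E_k \le \alpha + \beta E_{k-1}$ with $\alpha = 2 L_F T B_F^2 B_\theta^2 \Cvar/q$ and $\beta = 1 + 4 L_F \delta t$, ultimately reproducing the factor $(1 + 3 L_F T \mathrm e^{4 L_F T})$ in the final estimate.

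There is no real obstacle here: the statement is a specialization rather than a genuinely new result. The only conceptual point worth emphasizing in the writeup is that, even when each randomized increment is unbiased, the variance does not merely appear as a single stepwise $\Cvar/q$ contribution but is amplified by the cumulative Lipschitz propagation through $\Fb$, which is why the bound retains the exponential-in-$T$ factor characteristic of explicit Euler stability estimates.
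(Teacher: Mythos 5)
Your proposal is correct and matches the paper's (implicit) approach: the corollary is obtained by substituting $C_b = 0$ into the bound of \Cref{thm:randomized_constraints_convergence_biased}, which annihilates the second bracketed term and leaves exactly the stated variance-only estimate. Your optional rederivation through the proof is also accurate — with $\Cbias = 0$ the recursion constant reduces to $\alpha = 2 L_F T B_F^2 B_\theta^2 \Cvar/q$, and tracking it through \eqref{eq:pf_stepwise_error} at $k = K$ with $K\delta t = T$ reproduces the factor $(1 + 3L_F T \mathrm e^{4L_F T})$.
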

\Cref{cor:UnbiasedCase} shows that if an unbiased increment function $\wt\Fb$ is available, then the corresponding randomized time stepping is consistent in the sense that the global error can be made arbitrarily small by increasing the number of replicates $q$ and decreasing the time-step size $\delta t$.

\section{Randomized time stepping with least-squares increments}\label{sec:RTSInc}
We now consider the specific case that the increment function is given via a least-squares problem as in \eqref{eq:increment_function}. We propose randomized increment functions based on sketching and show that such randomized increment functions improve the conditioning of least-squares problems.

\subsection{Randomized least-squares increments via sketching}\label{sec:randomized_state_estimates}
We now develop a randomized increment function for increment functions given via a least-squares problem as in \eqref{eq:increment_function}.
We define the randomized increment function via sketching as 
\begin{align}\label{eq:least_square_randomized_state_estimation}
        \wt{\Fb}(\thetab; \Gammab) 
        = \argmin_{\etab \in \range(\Gammab)} \nbr{\Jb(\thetab) \etab - \fb(\thetab)}_2^2 
        = \Gammab \argmin_{\vb \in \R^m} \nbr{\Jb(\thetab) \Gammab \vb - \fb(\thetab)}_2^2,
    \end{align}
where $\Gammab$ is a random matrix of size $p \times m$ with $m \leq p$. Note that we will choose $\Gammab$ such that $\operatorname{rank}(\Jb(\thetab)\Gammab) = m$ and thus the least-squares problem in \eqref{eq:least_square_randomized_state_estimation} has a unique solution. 
Applying $\Gammab$ to $\Jb(\thetab)$ can be viewed as a random linear embedding of the update $\etab$ onto the column space of $\Jb(\thetab)$ from $\R^p$ into $\R^m$~\cite{halko2011finding,woodruff2014sketching}.
The randomized time stepping scheme associated with \eqref{eq:least_square_randomized_state_estimation} is then given by $\wt\thetab_{k+1} = \wt\thetab_k + \delta t \wt\etab_k$ for $k = 0, 1, \dots, K-1$ with
\begin{align}\label{eq:discrete_time_randomNG}
    \wt\etab_k = \frac{1}{q} \sum_{i=1}^{q} \wt{\Fb}(\wt\thetab_k; \Gammab_{k,i}), \quad 
    \wt{\Fb}(\wt\thetab_k; \Gammab_{k,i}) = \Gammab_{k,i} \argmin_{\vb \in \R^m} \nbr{\Jb(\wt\thetab_k) \Gammab_{k,i} \vb - \fb(\wt\thetab_k)}_2^2.
\end{align}

It is critical to note that we apply sketching from the right, which means that the unknown $\bfeta$ of the least-squares problem is sketched. This is different from many other sketching schemes for least-squares problems where the sketching is applied from the left \cite{sarlos2006improved,rokhlin2008fast,drineas2011faster}. To the best knowledge of the authors, the first randomized time stepping scheme of a similar nature has been proposed in \cite{berman2024randomized}. However, the work \cite{berman2024randomized} provides empirical results only. Our scheme given in \eqref{eq:discrete_time_randomNG} is a different formulation than the randomized time stepping proposed in \cite{lam2024randomizedlowrankrungekuttamethods,lindsey2025mne} that sketch the least-squares problem underlying the increment function from the left and so still solve for a $p$-dimensional parameter vector rather than an $m$-dimensional vector as \eqref{eq:discrete_time_randomNG}. 

Some common examples of random embeddings $\Gammab$ include Gaussian and random unitary embeddings~\cite[\S 3.3]{vershynin2018high}. A Gaussian embedding $\Gb \sim \Ncal(0,1/m)^{p \times m}$ is a $p \times m$ Gaussian random matrix with $\iid$ entries drawn from a normal distribution $\Ncal(0,1/m)$. 
A $\Gammab \sim \mathrm{Haar}(\mathrm{Stiefel}(p,m))$ with orthonormal columns is referred to as a random unitary embedding, which is drawn from the Haar measure of the Stiefel manifold of $p \times m$ matrices with orthonormal columns. It can be generated by orthonormalizing a Gaussian embedding $\Gb$.

In terms of efficiency, sketching reduces the least-squares problem dimension from $p$ to $m$ and leads to lower computational costs: computing the sketched matrix $\Jb(\thetab) \Gammab$ in \eqref{eq:least_square_randomized_state_estimation} scales as  $O(npm)$ and the costs of solving the randomized least-squares problem in \eqref{eq:discrete_time_randomNG} scale as $O(nm^2)$. In contrast, solving the original least-squares problem in \eqref{eq:increment_function} or its regularized versions in \eqref{eq:discrete_time_det_tikhonov} or \eqref{eq:discrete_time_det_tsvd} incurs computational costs that scale as $O(np^2)$. 
It is also worth highlighting that the $q$ local samples at each time step $k = 0, 1, \dots, K - 1$ can be computed fully in parallel. 

\subsection{Random sketching can improve conditioning} 
\label{sec:sketching_conditioning}
In this section, we quantify the improvement in conditioning achieved by randomization via sketching.
We show that for a matrix $\Jb(\thetab)\in \R^{n \times p}$ with full column rank, the condition number of the sketched matrix $\kappa(\Jb(\thetab)\Gammab)$ with a random embedding $\Gammab \in \R^{p \times m}$ can be lower than that of the original matrix $\kappa(\Jb(\thetab))$ when the sketching dimension $m$ is small enough and the spectrum of $\Jb(\thetab)$ decays. We show analogous results via matrix concentration and random matrix theory.

\begin{proposition}\label{prop:CondBound}
Let $\Jb(\thetab) \in \mathbb{R}^{n \times p}$ with $n \geq p$ and full rank $p$. Let further $\Gammab \sim \operatorname{Haar}(\operatorname{Stiefel}(p, m))$ with $1 \leq m \leq p$. Fix $0 \leq \epsilon < 1$ and $0 < \delta < 1$ and let $m \leq \ell \leq p$ be such that 
\begin{equation}\label{eq:PropCondBoundMEllCondition}
m \gtrsim \log\left(\frac{p}{\delta}\right)\,,\qquad \ell \gtrsim \frac{m}{\epsilon^2}\log\left(\frac{m}{\delta}\right)\,,
\end{equation}
holds, then the condition number can be bounded as
\begin{equation}\label{eq:CondAGammaStatement}
\kappa(\Jb(\thetab)\Gammab) \leq \frac{\sigma_{\text{max}}(\Jb(\thetab))}{\sigma_{\text{min}}(\Jb(\thetab)\Gammab)} \leq \sqrt{\frac{1}{1 - \epsilon}\frac{p}{\ell}} \frac{\sigma_1(\Jb(\thetab))}{\sigma_{\ell}(\Jb(\thetab))}\,,
\end{equation}
with probability at least $1 - \delta$. The constants hidden in \eqref{eq:PropCondBoundMEllCondition} are independent of $n, p, m, \ell, \epsilon, \delta$.
\end{proposition}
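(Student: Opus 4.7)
The plan is to bound the numerator and denominator of the condition number separately. Because $\Gammab$ has orthonormal columns, $\|\Gammab\|_2 = 1$ and hence $\sigma_{\max}(\Jb(\thetab)\Gammab) \le \sigma_{\max}(\Jb(\thetab))$ deterministically; this already yields the first inequality in \eqref{eq:CondAGammaStatement}. All of the randomness therefore enters through the lower bound on $\sigma_{\min}(\Jb(\thetab)\Gammab) = \sigma_m(\Jb(\thetab)\Gammab)$, which is the main task.

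To bound $\sigma_m(\Jb(\thetab)\Gammab)$ from below I would take the SVD $\Jb(\thetab) = \Ub\Sigmab\Vb^\top$ and let $\Vb_\ell \in \R^{p \times \ell}$ collect the top $\ell$ right singular vectors. For any unit vector $\xb \in \R^m$,
\begin{align*}
\|\Jb(\thetab)\Gammab\xb\|_2^2 \,=\, \sum_{i=1}^{p} \sigma_i(\Jb(\thetab))^2 (\Vb^\top\Gammab\xb)_i^2 \,\ge\, \sigma_\ell(\Jb(\thetab))^2 \, \|\Vb_\ell^\top\Gammab\xb\|_2^2\,,
\end{align*}
and minimizing over $\xb$ yields $\sigma_m(\Jb(\thetab)\Gammab) \ge \sigma_\ell(\Jb(\thetab)) \cdot \sigma_m(\Vb_\ell^\top\Gammab)$. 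Consequently the proposition reduces to the distributional claim that, with probability at least $1-\delta$,
\begin{align*}
\sigma_m(\Vb_\ell^\top\Gammab)^2 \,\ge\, (1-\epsilon)\,\ell/p\,.
\end{align*}

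To prove this claim I would exploit the rotational invariance of the Haar measure on $\mathrm{Stiefel}(p,m)$: because $\Vb_\ell$ has orthonormal columns, $\Vb_\ell^\top\Gammab$ is distributionally equivalent to the first $\ell$ rows of a Haar-distributed $p \times m$ Stiefel matrix, equivalently an $\ell \times m$ submatrix of a Haar-distributed $p \times p$ orthogonal matrix. For any fixed unit vector $\xb \in \R^m$ the vector $\Gammab\xb$ is uniformly distributed on $S^{p-1}$, so $\|\Vb_\ell^\top\Gammab\xb\|_2^2$ is the squared projection of a uniform unit vector onto a fixed $\ell$-dimensional subspace. This quantity has mean $\ell/p$ and, by L\'evy's concentration of measure on the sphere, admits a Gaussian lower tail of the form $\Pr[\|\Vb_\ell^\top\Gammab\xb\|_2^2 \le (1-\epsilon)\ell/p] \le \exp(-c\ell\epsilon^2)$ for a universal constant $c$. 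A standard $\gamma$-net on $S^{m-1}$ of cardinality at most $(3/\gamma)^m$, combined with a union bound and the usual net-to-sphere transfer, upgrades this pointwise bound to a uniform one on $\sigma_m(\Vb_\ell^\top\Gammab)$. Matching the union-bound failure probability against $\delta$ then produces the condition $\ell \gtrsim (m/\epsilon^2)\log(m/\delta)$, while the companion condition $m \gtrsim \log(p/\delta)$ arises from controlling the accompanying upper tail of $\sigma_1(\Vb_\ell^\top\Gammab)$ that enters the net-to-sphere transfer and ensures that the leading factor $(1-\epsilon)$ is preserved rather than degraded.

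The principal obstacle is the careful bookkeeping of constants in the net argument: both the lower and upper singular-value tails of $\Vb_\ell^\top\Gammab$ must be controlled simultaneously so that the net-to-sphere transfer yields exactly the leading factor $(1-\epsilon)\,\ell/p$ rather than a weaker constant, and the two probability budgets must split in a way that recovers the precise asymmetric conditions \eqref{eq:PropCondBoundMEllCondition}. The remaining steps are standard linear algebra together with classical concentration inequalities for the uniform distribution on the sphere.
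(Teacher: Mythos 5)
Your opening reduction matches the paper exactly: $\sigma_{\max}(\Jb(\thetab)\Gammab)\le\sigma_{\max}(\Jb(\thetab))$ from $\|\Gammab\|_2=1$, and $\sigma_{\min}(\Jb(\thetab)\Gammab)\ge\sigma_\ell(\Jb(\thetab))\,\sigma_{\min}(\Vb_\ell^\top\Gammab)$ via the SVD-partition argument. The genuine divergence is in how $\sigma_{\min}(\Vb_\ell^\top\Gammab)$ is bounded from below. You propose fixing $\xb\in S^{m-1}$, observing $\Gammab\xb\sim\mathrm{Unif}(S^{p-1})$, invoking sphere/Beta concentration for the squared projection onto a fixed $\ell$-dimensional subspace, and then running an $\epsilon$-net over $S^{m-1}$. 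The paper instead studies the $p$ rows of $\Gammab$ directly, controls all their norms simultaneously (this is where the condition $m\gtrsim\log(p/\delta)$ enters), and then applies a matrix Chernoff bound for sampling $\ell$ rank-one terms without replacement. These are genuinely different techniques, and your route is a standard and reasonable one.

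However, as written the proposal claims more than the $\epsilon$-net argument actually delivers, and two of those claims are where the gap lies. First, to make the net-to-sphere transfer preserve the factor $(1-\epsilon)$ you must take the net resolution $\gamma\sim\epsilon$ (and you must first upper-bound $\sigma_{\max}(\Vb_\ell^\top\Gammab)\lesssim\sqrt{\ell/p}$, since the trivial bound $\le1$ would force $\gamma\sim\sqrt{\ell/p}$ and introduce an unwanted $\log(p/\ell)$). A $\gamma$-net on $S^{m-1}$ then has cardinality $(C/\epsilon)^m$, and matching the resulting union bound against $\exp(-c\ell\epsilon^2)$ produces the condition $\ell\epsilon^2\gtrsim m\log(1/\epsilon)+\log(1/\delta)$, not the stated $\ell\gtrsim(m/\epsilon^2)\log(m/\delta)$. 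These are not equivalent: for $\epsilon$ small (say $\epsilon\ll\delta/m$), the $m\log(1/\epsilon)/\epsilon^2$ term exceeds $(m/\epsilon^2)\log(m/\delta)$, so the hypothesis \eqref{eq:PropCondBoundMEllCondition} does not imply your condition, and your proof would not establish the proposition as stated. In the paper the $\log m$ arises as the dimension factor of the matrix Chernoff bound, a mechanism your net argument does not reproduce. Second, your attribution of the condition $m\gtrsim\log(p/\delta)$ to controlling the upper tail of $\sigma_1(\Vb_\ell^\top\Gammab)$ is not correct: a net-based upper bound on $\sigma_1(\Vb_\ell^\top\Gammab)$ again trades the net cardinality $\sim C^m$ against $\exp(-c\ell\epsilon^2)$ and yields a further condition on $\ell$, not on $m$, with no explicit $p$-dependence. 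In the paper's proof the $\log(p/\delta)$ arises from a union bound over the $p$ rows of $\Gammab$ when establishing the event $\mathcal E$; there is no step in your argument that takes a union over $p$ objects, so this condition would simply not appear in your conditions, and the sentence asserting otherwise needs to be removed or replaced with an argument that actually produces it.
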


Before we provide a proof, let us remark on the difference of our result to randomized SVD. 
The random subspace constraints in \eqref{eq:least_square_randomized_state_estimation} can be viewed as a form of randomized regularization for the poorly conditioned least-squares problems in \eqref{eq:increment_function}. 
Borrowing the intuition from randomized SVD~\cite{halko2011finding}, the sketched matrix $\Jb(\thetab)\Gammab$ approximately preserves the top $m$ singular values of $\Jb(\thetab)$~\cite{halko2011finding,gu2015subspace}, as well as their corresponding singular vectors~\cite{saibaba2019randomized,dong2024efficient}. 
Therefore, the random subspace constraints in \eqref{eq:least_square_randomized_state_estimation} is effectively a form of randomized truncation, analogous to the truncated SVD in \eqref{eq:discrete_time_det_tsvd}. 
Despite the shared intuition between randomized SVD and random subspace constraints, the rich analyses for randomized SVD~\cite{halko2011finding} have no obvious extensions to \eqref{eq:least_square_randomized_state_estimation} due to a critical nuance---randomized SVD has at least one built-in power iteration, \ie the analyses are based on $\Jb(\thetab)^\top \Jb(\thetab) \Gammab$ or $(\Jb(\thetab) \Jb(\thetab)^\top)^q \Jb(\thetab) \Gammab$ for some $q \ge 1$, instead of $\Jb(\thetab) \Gammab$~\cite{halko2011finding,gu2015subspace,martinsson2020randomized}. 
Conditioning of the plain sketched matrix $\Jb(\thetab)\Gammab$ (without power iterations) is less studied. To the best of our knowledge, the most relevant result in this setting is \cite[Lemma 22]{derezinski2024fine} that establishes a tail bound for the smallest singular value of $\Jb(\thetab)\Gammab$ with a special sparse embedding $\Gammab$ named LESS embedding~\cite{derezinski2021sparse}.

We now show two lemmata, which will help with the proof of \Cref{prop:CondBound}. \nocite{doi:10.1142/S1793536911000787}

\begin{lemma}\label{lm:BetaConcentration}
Let $\ell, p \in \mathbb{N}$ with $1 \leq \ell < p$ and $Z \sim \operatorname{Beta}(\frac{\ell}{2}, \frac{p - \ell}{2})$. For $0 < \epsilon \leq \min\{1, \frac{p - \ell}{\ell}\}$, the bound
\begin{equation}\label{eq:LemmaBetaConcentration}
\mathbb{P}\left[\left|Z - \frac{\ell}{p}\right| > \epsilon \frac{\ell}{p}\right] \leq 4 \exp\left(-\frac{\epsilon^2 \ell}{64}\right)
\end{equation}
holds. 
\end{lemma}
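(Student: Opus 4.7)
The plan is to exploit the stochastic representation $Z \stackrel{d}{=} X/(X+Y)$ with $X \sim \chi^2_\ell$ and $Y \sim \chi^2_{p-\ell}$ independent, which recovers the stated Beta distribution. This reduces the concentration of $Z$ around its mean $\ell/p$ to the concentration of two chi-squared variables around their degrees of freedom, for which sharp sub-exponential tail bounds are available via the Laurent--Massart inequality.

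The key algebraic identity is
\[
Z - \frac{\ell}{p} \;=\; \frac{(p-\ell)(X - \ell) \;-\; \ell\bigl(Y - (p-\ell)\bigr)}{p(X+Y)}\,,
\]
so that $\bigl|Z - \ell/p\bigr| \le \bigl[(p-\ell)|X-\ell| + \ell|Y-(p-\ell)|\bigr]/\bigl[p(X+Y)\bigr]$. I would then introduce a good event $\mathcal{E}$ on which $|X - \ell| \le \ell\epsilon/4$ and $|Y - (p-\ell)| \le p\epsilon/4$; an elementary calculation using $\ell \le p$ shows that on $\mathcal{E}$ the numerator is at most $\ell p\epsilon/2$ while the denominator is at least $p^2/2$ whenever $\epsilon \le 1$, giving the deterministic bound $|Z - \ell/p| \le \ell\epsilon/p$ on $\mathcal{E}$.

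It remains to bound $\mathbb{P}[\mathcal{E}^c]$, for which I would union-bound over the four one-sided chi-squared tails. For $X$, the Laurent--Massart lower tail gives $\mathbb{P}[X \le \ell(1-\epsilon/4)] \le \exp(-\ell\epsilon^2/64)$ directly (taking $t = \ell\epsilon^2/64$ so that $2\sqrt{\ell t} = \ell\epsilon/4$), and the upper tail gives a matching bound up to a quadratic correction $2t$ in the threshold, which is absorbed using $\epsilon \le 1$. For $Y$, the target tolerance $p\epsilon/4$ corresponds to a relative perturbation $p\epsilon/[4(p-\ell)]$; the hypothesis $\epsilon \le (p-\ell)/\ell$ ensures this ratio stays below $1$ so that Laurent--Massart remains applicable to $Y$, and the inequality $\ell(p-\ell) \le p^2/4$ lets the resulting exponent be transferred to the form $\ell\epsilon^2/64$. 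Summing the four contributions yields the claimed $4\exp(-\epsilon^2\ell/64)$ bound.

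The main obstacle I anticipate is the book-keeping required to extract exactly the constant $64$: the lower chi-squared tail produces it cleanly (as $\ell\cdot(\epsilon/4)^2/4$), but the upper tail carries the quadratic correction $2t$ in the Laurent--Massart threshold $2\sqrt{\ell t} + 2t$, so verifying that $t = \ell\epsilon^2/64$ remains admissible at the target threshold $\ell(1+\epsilon/4)$ requires a short but careful calculus step. Any slight mismatch can be absorbed either into the prefactor $4$ or into the regime restriction $\epsilon \le \min\{1,(p-\ell)/\ell\}$, which is precisely the range over which both the $X$-tail and $Y$-tail arguments deliver the stated constant.
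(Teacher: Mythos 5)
Your plan rests on the same foundation as the paper's proof: represent $Z \stackrel{d}{=} X/(X+Y)$ with independent $X \sim \chi^2(\ell)$, $Y \sim \chi^2(p-\ell)$, then apply the Laurent--Massart chi-squared tail bounds with $\tau = \epsilon^2\ell/64$. Your reorganization into a single ``good event'' $\mathcal{E}$ followed by a deterministic verification $|Z - \ell/p| \le \epsilon\ell/p$ on $\mathcal{E}$ is cleaner than the paper's argument, and in particular it avoids the paper's case split $\ell \le p/2$ versus $\ell > p/2$ (the paper handles the latter by passing to $Z' = 1 - Z$ and rescaling $\epsilon$, which is exactly where the hypothesis $\epsilon \le (p-\ell)/\ell$ is consumed). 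That is a genuine structural simplification.

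However, the sketch contains a real gap in the upper tail of $X$, and your proposed fixes do not repair it. The Laurent--Massart upper-tail inequality gives $\mathbb{P}[X \ge \ell + 2\sqrt{\ell\tau} + 2\tau] \le e^{-\tau}$, and with $\tau = \epsilon^2\ell/64$ the threshold is $\ell + \ell\epsilon/4 + \ell\epsilon^2/32$, which strictly exceeds $\ell(1 + \epsilon/4)$ for every $\epsilon > 0$. So the claim that ``$t = \ell\epsilon^2/64$ remains admissible at the target threshold $\ell(1+\epsilon/4)$'' is false; one cannot conclude $\mathbb{P}[X > \ell(1+\epsilon/4)] \le e^{-\epsilon^2\ell/64}$, no matter how $\epsilon \le 1$ is invoked. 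Absorbing the discrepancy into the prefactor $4$ cannot help either, since the issue is in the exponent, not the constant out front. The correct remedy is to build the good event $\mathcal{E}$ from the \emph{exact} Laurent--Massart thresholds (including the $2\tau$ correction in both upper tails), and then verify that on the resulting box one still has $|Z - \ell/p| \le \epsilon\ell/p$; because $Z = X/(X+Y)$ is monotone increasing in $X$ and decreasing in $Y$, it suffices to check the two opposite corners. This verification does go through, with slack supplied by the factor $\sqrt{\ell(p-\ell)} \le p/2$ from AM--GM, but it is a genuine multi-line algebraic computation rather than an absorption---it is essentially the same kind of verification the paper performs in equation \eqref{eq:CondProofIntermediateFirstXXYBound3} and its lower-tail analogue. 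Once you do that computation with the enlarged thresholds, the case split becomes unnecessary and the rest of your argument is sound.
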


One approach to prove \Cref{lm:BetaConcentration} is to use that ratios of independent random variables with $\chi^2$ distributions have Beta distributions; see, e.g., \cite[Section 2, p.~212]{JohnsonRV}. One can then apply standard concentration bounds for $\chi^2$ random variables \cite[Lemma~1]{10.1214/aos/1015957395} to derive the bound \eqref{eq:LemmaBetaConcentration}. We leave the proof for the appendix.

\begin{lemma}\label{lm:W1SVDBoundNEW}
Let $\Vb \in \mathbb{R}^{p \times \ell}$ be a matrix with orthonormal columns and $\Gammab \sim \operatorname{Haar}(\operatorname{Stiefel}(p,m))$ be a random unitary embedding of size $p \times m$ with $1 \leq m \leq \ell \leq p$. Fix $0 < \epsilon \leq 1$ and $0 < \delta < 1$. If
\begin{equation}\label{eq:W1SVDBoundNEW:CondMEll}
m \geq 64 \log \left(\frac{8p}{\delta}\right)\,,\qquad \ell \geq \frac{6}{\epsilon^2} m \log\left(\frac{4m}{\delta}\right)\,,
\end{equation}
then, with probability at least $1 - \delta$,
\begin{equation}\label{eq:W1SVDBoundResultNew}
\sqrt{\frac{\ell}{p}(1 - \epsilon)} \leq \sigma_{\text{min}}(\Vb^{\top}\Gammab) \leq \sigma_{\text{max}}(\Vb^{\top}\Gammab) \leq \sqrt{\frac{\ell}{p}(1 + \epsilon)}\,.
\end{equation}
\end{lemma}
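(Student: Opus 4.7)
The strategy is to combine the pointwise Beta concentration of Lemma~\ref{lm:BetaConcentration} with a covering-net argument on $S^{m-1}$. The starting observation is that right-invariance of the Haar measure on $\operatorname{Stiefel}(p,m)$ under $O(m)$ implies $\Gammab u \sim \mathrm{Unif}(S^{p-1})$ for any fixed unit vector $u \in \mathbb{R}^m$; consequently $\|\Vb^\top \Gammab u\|^2$ is the squared length of the projection of a uniform unit vector in $\mathbb{R}^p$ onto the $\ell$-dimensional column space of $\Vb$, which has the $\mathrm{Beta}(\ell/2,(p-\ell)/2)$ distribution. Lemma~\ref{lm:BetaConcentration} then yields the pointwise tail
\[
\Pr\bigl[\,\bigl|\,\|\Vb^\top \Gammab u\|^2 - \ell/p\,\bigr| > \epsilon'\ell/p\,\bigr] \leq 4\exp(-(\epsilon')^2\ell/64),
\]
for any $\epsilon' \in (0,1]$, which will be the workhorse estimate of the proof.

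\textbf{Main steps.} The proof proceeds in four parts. (i) Fix an $\epsilon_0$-net $N \subset S^{m-1}$ of cardinality at most $(3/\epsilon_0)^m$ and union-bound the pointwise Beta tail over $u \in N$. (ii) Separately, apply Lemma~\ref{lm:BetaConcentration} with $\epsilon = 1$ to the squared row norms $\|\Gammab_{i,:}\|^2 \sim \mathrm{Beta}(m/2,(p-m)/2)$ and union-bound over $i \in [p]$ to obtain $\max_{i\in[p]}\|\Gammab_{i,:}\|^2 \leq 2m/p$; requiring this event to hold with probability at least $1-\delta/2$ is exactly the role of the hypothesis $m \geq 64\log(8p/\delta)$. (iii) Promote the pointwise bound on $N$ to a uniform bound on $S^{m-1}$ by a standard perturbation: for $u \in S^{m-1}$ with nearest $u_0 \in N$, expand $\|\Vb^\top \Gammab u\|^2 - \|\Vb^\top \Gammab u_0\|^2$ coordinate-wise and control it by the row-norm bound from (ii), which is sharper than the crude bound $\sigma_{\max}(\Vb^\top \Gammab)\leq 1$. (iv) Calibrate $\epsilon'$ and $\epsilon_0$ as suitable fractions of $\epsilon$ so that pointwise fluctuation and discretization error together satisfy $|\|\Vb^\top \Gammab u\|^2 - \ell/p| \leq \epsilon\ell/p$ for every $u \in S^{m-1}$; the union-bound budget $4(3/\epsilon_0)^m \exp(-(\epsilon')^2\ell/64) \leq \delta/2$ then produces the hypothesis $\ell \geq (6/\epsilon^2) m \log(4m/\delta)$. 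Combining the two good events via a union bound and taking square roots delivers \eqref{eq:W1SVDBoundResultNew}.

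\textbf{Main obstacle.} The delicate part lies in Steps (iii)--(iv): achieving a \emph{relative} accuracy of $\epsilon$ on $\sigma_{\min}(\Vb^\top\Gammab)$ requires the net-discretization error to be small compared to $\sqrt{\ell/p}$, not merely small in absolute terms, which forces $\epsilon_0$ to scale with $\sqrt{\ell/p}$ and inflates the net cardinality $(3/\epsilon_0)^m$. The row-norm preprocessing in Step~(ii) is exactly what keeps the resulting overhead in $\ell$ down to only a $\log(4m/\delta)$ factor above the unavoidable $m/\epsilon^2$ scaling; without that preprocessing, one would pick up an extra $\log(p)$ inside the $\ell$-threshold. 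Matching the precise constants $64$, $6$, $4$, and $8$ in the stated thresholds is a careful but otherwise mechanical optimization of $\epsilon_0$ and $\epsilon'$ in terms of $\epsilon$ and $\delta$.
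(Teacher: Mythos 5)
Your proposal takes a genuinely different route from the paper. The paper does not use an $\epsilon$-net. After reducing WLOG to $\Vb\Vb^{\top} = \operatorname{diag}(\Ib_{\ell}, \boldsymbol{0}_{p-\ell})$, it writes $\Gammab^{\top}\Vb\Vb^{\top}\Gammab = \sum_{i=1}^{\ell}\rb_i\rb_i^{\top}$, uses the exchangeability of the rows of $\Gammab$ to identify this sum in distribution with an $\ell$-fold uniform sample without replacement from $\{\rb_1\rb_1^{\top},\dots,\rb_p\rb_p^{\top}\}$, and then applies a matrix Chernoff bound (Tropp's Theorem~2.2) conditionally on the row-norm event $\mathcal{E}=\{\max_i\|\rb_i\|_2^2 \le 2m/p\}$. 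The row-norm cap is consumed as the almost-sure upper bound $R = (1+\alpha)m/p$ on each summand, and the Chernoff tail $2m\exp(-\epsilon^2\ell/(3(1+\alpha)m))$ yields \emph{exactly} the stated threshold $\ell \ge \frac{6}{\epsilon^2}m\log(4m/\delta)$. Your pointwise observation that $\|\Vb^{\top}\Gammab u\|_2^2 \sim \operatorname{Beta}(\ell/2,(p-\ell)/2)$ for fixed $u\in S^{m-1}$ is correct and is a legitimate alternative starting point.

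However, your closing claim that matching the constants $64,6,4,8$ is ``a careful but otherwise mechanical optimization'' does not hold up, and this is a genuine gap in the proposal. With the fine net you describe, the discretization error at a point $u$ with nearest $u_0\in N$ is (after Cauchy--Schwarz with the row-norm cap) of order $\epsilon_0\,\ell m/p$, which forces $\epsilon_0 \lesssim \epsilon/m$ and hence $|N| \lesssim (Cm/\epsilon)^m$. The union-bound budget then requires $\ell \gtrsim \epsilon^{-2}\big(m\log m + m\log(1/\epsilon) + \log(1/\delta)\big)$, with an extra $m\log(1/\epsilon)$ term that is absent from \eqref{eq:W1SVDBoundNEW:CondMEll}. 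Any calibration of $\epsilon'$ and $\epsilon_0$ as fractions of $\epsilon$ leaves this $\log(1/\epsilon)$ behind. Alternatively, you could use a \emph{constant} net parameter $\epsilon_0 = 1/4$ together with the standard centered-quadratic-form lemma $\|M\|_2 \le \frac{1}{1-2\epsilon_0}\max_{u\in N}|u^{\top}Mu|$ applied to $M = \Gammab^{\top}\Vb\Vb^{\top}\Gammab - \frac{\ell}{p}\Ib_m$; this avoids the $\log(1/\epsilon)$, makes the row-norm preprocessing (and therefore the $m\ge 64\log(8p/\delta)$ hypothesis) superfluous, and actually improves the $m$-dependence to $\ell\gtrsim \epsilon^{-2}(m+\log(1/\delta))$ without the paper's $\log m$ factor --- but the leading constant then sits near $256\log 9 \approx 560$, far above the paper's $6$. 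In short: your net argument can establish a result of the same \emph{form}, and each flavor trades something against the matrix-Chernoff route (no $\log m$ for the constant-net version, but a much larger constant; exact $m\log m$ for the fine-net version, but an extra $\log(1/\epsilon)$), yet neither variant reproduces the precise hypothesis \eqref{eq:W1SVDBoundNEW:CondMEll} as stated, and you should not claim it does.

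Two smaller points worth flagging. First, your application of Lemma~\ref{lm:BetaConcentration} with $\epsilon=1$ to the row norms silently assumes $(p-m)/m \ge 1$; when $m > p/2$ the lemma's hypothesis $\epsilon \le \min\{1,(p-m)/m\}$ fails, and one must note (as the paper does) that the cap $\|\rb_i\|_2^2 \le 2m/p$ then holds trivially because $\|\rb_i\|_2^2 \le 1$. Second, the pointwise Beta tail for $\|\Vb^{\top}\Gammab u\|_2^2$ likewise requires $\epsilon' \le (p-\ell)/\ell$, which silently restricts $\ell/p$; the paper sidesteps this entirely because it only ever invokes Lemma~\ref{lm:BetaConcentration} for the $\operatorname{Beta}(m/2,(p-m)/2)$ row norms, not for the $\ell$-dimensional projection.
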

\begin{proof}
First, notice that the singular values of $\Vb^{\top}\Gammab$ are the square roots of the eigenvalues of $\Gammab^{\top}\Vb\Vb^{\top}\Gammab$. Because the Haar distribution is invariant under rotations, we can introduce a matrix $\Qb$ of size $p \times p$ such that $\Qb^{\top}(\Vb\Vb^{\top})\Qb = \operatorname{diag}(\Ib_{\ell}, \boldsymbol{0}_{p - \ell})$ while
\[
\Gammab^{\top}\Qb^{\top}(\Vb\Vb^{\top})\Qb\Gammab \stackrel{d}{=} \Gammab^{\top}\Vb\Vb^{\top}\Gammab\,,
\]
holds, where $\stackrel{d}{=}$ denotes equality in distribution. Thus, we can assume without loss of generality that $\Vb\Vb^{\top} = \operatorname{diag}(\Ib_{\ell}, \boldsymbol{0}_{p - \ell})$, which means that
\[
\Gammab^{\top}\Vb\Vb^{\top}\Gammab = \sum_{i = 1}^{\ell} \rb_i\rb_i^{\top}\,,
\]
where the $m$-dimensional vectors $\rb_1, \dots, \rb_{p}$ denote the rows of $\Gammab = [\rb_1^{\top}; \dots; \rb_p^{\top}]$. Recall that $\Gammab$ has orthonormal columns so that
\[
\sum_{i = 1}^p \rb_i \rb_i^{\top} = \Gammab^{\top}\Gammab = \Ib_m\,.
\]
We now relate the rows $\rb_1, \dots, \rb_p$ to uniformly distributed samples on the sphere $\mathbb{S}^{p - 1}$ in $p$ dimensions: Extend $\Gammab$ to $\Ub \sim \operatorname{Haar}(p)$ of size $p \times p$ so that $\rb_i^{\top} = \eb_i^{\top}\Gammab = [\eb_i^{\top}\Ub]_{1:m}$, which means the $i$-th row of $\Gammab$ corresponds to the first $m$ elements of the $i$-th row of $\Ub$. Now recall that the rows of $\Ub$ are uniformly distributed on $\mathbb{S}^{p-1}$. Let the $i$-th row of $\Ub$ be denoted as $\ub_i$. Let now $\gb = [g_1, \dots, g_p]$ with \emph{i.i.d.} $g_1, \dots, g_p \sim \mathcal{N}(0, 1)$ so that $\ub_i \stackrel{d}{=} \gb/\|\gb\|_2$. Now notice that $\|\rb_i\|_2^2 \stackrel{d}{=} \frac{\sum_{i = 1}^m g_i^2}{\sum_{i = 1}^p g_i^2} \stackrel{d}{=} X/(X + Y)$ with independent $\chi^2$ random variables $X, Y$ with $m$ and $p-m$ degrees of freedom, respectively.  
Thus, for each $i = 1, \dots, p$, the squared norm $\|\rb_i\|_2^2$ is distributed as a Beta random variable with $\operatorname{Beta}(m/2, (p - m)/2)$ with mean $m/p$; see, e.g., \cite[Section 2, p.~212]{JohnsonRV}. 

We now define the event
\[
\mathcal{E} = \left\{\max_{1 \leq i \leq p}  \|\rb_i\|_2^2 - \frac{m}{p} \leq \alpha \frac{m}{p}\right\}\,,
\]
where 
\[
\alpha = \sqrt{\frac{64}{m}\log\left(\frac{8p}{\delta}\right)}\,.
\]
Now notice that $\alpha \leq 1$ because of the condition on $m$ given in \eqref{eq:W1SVDBoundNEW:CondMEll}. Let us now distinguish between two cases. First, $\alpha > (p - m)/m$, then $(1 + \alpha)m/p \geq 1$ and thus $\mathcal{E}$ holds with probability 1 because $\|\rb_i\|_2^2 \leq 1$ by definition. If $\alpha \leq (p-m)/m$, then $\alpha \leq \min\{1,(p-m)/m\}$ because $\alpha \leq 1$ always holds because of \eqref{eq:W1SVDBoundNEW:CondMEll}. 
Thus, we can invoke Lemma~\ref{lm:BetaConcentration} and obtain
\[
\mathbb{P}\left[\|\rb_i\|_2^2 - \frac{m}{p} > \alpha \frac{m}{p}\right] \leq 4 \exp\left(-\frac{\alpha^2 m}{64}\right)\,.
\]
With a union bound over $i = 1, \dots, p$ we obtain
\begin{equation}\label{eq:W1SVDBoundNEW:BoundOnMathcalE}
\mathbb{P}\left[\mathcal{E}\right] \geq 1- 4 p \exp\left(- \frac{\alpha^2 m}{64}\right) = 1 - 4p \exp\left(-\log\left(\frac{8p}{\delta}\right)\right) = 1 - \frac{\delta}{2}\,
\end{equation}
holds. Thus, as long as \eqref{eq:W1SVDBoundNEW:CondMEll} is satisfied, we have $\mathbb{P}[\mathcal{E}] \geq 1 - \delta/2$. 

We now work on the event $\mathcal{E}$ and condition on a fixed realization $\Gammab$ corresponding to an element in $\mathcal{E}$. For $i = 1, \dots, p$, we define $\Rb_i = r_i r_i^{\top}$ and recall that all $\Rb_i$ are symmetric positive-semidefinite matrices. Furthermore, because of $\mathcal{E}$, we have 
\begin{equation}\label{eq:W1SVDBoundNEW:UpperB}
\lambda_{\max}(\Rb_i) = \|\rb_i\|_2^2 \leq (1 + \alpha)\frac{m}{p}\,.
\end{equation}
Given $\Gammab$, we now sample $\Zb_1, \dots, \Zb_{\ell}$ from $\mathcal{R} = \{\Rb_1, \dots, \Rb_p\}$ uniformly without replacement. For a single sample $\Zb_1$ from $\mathcal{R}$, this means that $\Zb_1$ is uniformly distributed on $\mathcal{R}$ and thus 
\[
\mathbb{E}[\Zb_1 | \Gammab] = \frac{1}{p}\sum_{i = 1}^p \Rb_i = \frac{1}{p}\Gammab^{\top}\Gammab = \frac{1}{p}\Ib_m\,.
\]
We set 
\[
\mu_{\text{min}} = \ell \lambda_{\min}(\mathbb{E}[\Zb_1 | \Gammab]) = \ell/p\,,\qquad \mu_{\max} = \ell \lambda_{\max}(\mathbb{E}[\Zb_1 | \Gammab]) = \ell/p\,, 
\]
and are now ready to apply \cite[Theorem~2.2]{doi:10.1142/S1793536911000787}: For $\bar{\Zb} = \sum_{i = 1}^{\ell} \Zb_i$ and $0 \leq \epsilon < 1$, it holds
\begin{align}
\mathbb{P}\left[\lambda_{\min}(\bar{\Zb}) \leq (1 - \epsilon)\frac{\ell}{p}\, \big|\, \Gammab\right] & \leq m \left(\frac{\mathrm e^{-\epsilon}}{(1 - \epsilon)^{1 - \epsilon}}\right)^{\ell/((1 + \alpha) m)}\label{eq:W1SVDBoundNEW:UpperTailA}\,,\\
\mathbb{P}\left[\lambda_{\max}(\bar{\Zb}) \geq (1 + \epsilon)\frac{\ell}{p}\, \big|\, \Gammab\right] & \leq m\left(\frac{\mathrm e^{\epsilon}}{(1 + \epsilon)^{1 + \epsilon}}\right)^{\ell/((1 + \alpha)m)}\label{eq:W1SVDBoundNEW:UpperTailB}\,.
\end{align}
Because
\[
\max\left\{\frac{\mathrm e^{\epsilon}}{(1 + \epsilon)^{1 + \epsilon}}, \frac{\mathrm e^{-\epsilon}}{(1 - \epsilon)^{1-\epsilon}}\right\} \leq \mathrm \exp\left(-\frac{\epsilon^2}{3}\right)\,,\qquad \text{ for } 0 \leq \epsilon < 1\,,
\]
and $\bar{\Zb} - \frac{\ell}{p} \Ib_m$ is symmetric, we obtain with a union bound
\begin{equation}\label{eq:W1SVDBoundNEW:GammaVVGammaBoundConditioned}
\mathbb{P}\left[\|\bar{\Zb} - \frac{\ell}{p} \Ib_m \|_2 \geq \epsilon \frac{\ell}{p} \,\big|\, \Gammab\right] \leq 2m \exp\left(-\frac{\epsilon^2}{3} \frac{\ell}{(1 + \alpha)m}\right)\,.
\end{equation}
If we set $\ell$  as in \eqref{eq:W1SVDBoundNEW:CondMEll} and because the choice of $\alpha$ gives $1/(1 + \alpha) \geq 1/2$, the right-hand side of \eqref{eq:W1SVDBoundNEW:GammaVVGammaBoundConditioned} is at most $\delta/2$. 

For making the following tower rule argument work, it is important to note that $\mu_{\text{min}}, \mu_{\text{max}}, \mathbb{E}[\Zb_1 | \Gammab]$, and the upper bound \eqref{eq:W1SVDBoundNEW:UpperB}, and thus the right-hand side of \eqref{eq:W1SVDBoundNEW:UpperTailA}--\eqref{eq:W1SVDBoundNEW:UpperTailB}, are the same for all $\Gammab$ corresponding to the event $\mathcal{E}$. 
Thus, using the tower rule,  we combine \eqref{eq:W1SVDBoundNEW:GammaVVGammaBoundConditioned} with \eqref{eq:W1SVDBoundNEW:BoundOnMathcalE} to obtain
\[
\mathbb{P}\left[\|\bar{\Zb} - \frac{\ell}{p} \Ib_m \|_2 \geq \epsilon \frac{\ell}{p}\right] \leq \mathbb{P}\left[\|\bar{\Zb} - \frac{\ell}{p} \Ib_m \|_2 \geq \epsilon \frac{\ell}{p} \,\big|\, \Gammab\right] \mathbb{P}[\mathcal{E}] +  \mathbb{P}[\mathcal{E}^c] \leq \delta\,,
\]
for $\alpha$ and $\ell$ as given in \eqref{eq:W1SVDBoundNEW:CondMEll}and $\mathcal{E}^c$ denoting the complement of $\mathcal{E}$. Note that we used the trivial inequality $\mathbb{P}[\|\bar{\Zb} - \frac{\ell}{p} \Ib_m \|_2 \geq \epsilon \frac{\ell}{p} | \mathcal{E}^c] \leq 1$.

Now we use that $\bar{\Zb} \stackrel{d}{=} \Gammab^{\top}\Vb\Vb^{\top}\Gammab$ because the Haar law is left-invariant under permutation matrices and thus the rows are exchangeable (i.e., $\sum_{i = 1}^{\ell} \rb_i \rb_i^{\top} \stackrel{d}{=} \sum_{i \in S}\rb_i\rb_i^{\top}$ for any subset $S \subset \{1, \dots, p\}$ of size $|S| = \ell$; in particular, for the $\ell$ samples $\Zb_1, \dots, \Zb_{\ell}$ from $\mathcal{R}$). Furthermore, the square roots of the non-zero eigenvalues of $\Gammab^{\top} \Vb \Vb^{\top}\Gammab$ are the singular values of $\Vb^{\top}\Gammab$, which shows \eqref{eq:W1SVDBoundResultNew}.

\end{proof}

Building on Lemmata~\ref{lm:BetaConcentration} and \ref{lm:W1SVDBoundNEW}, we can now state the proof of Proposition~\ref{prop:CondBound}.

\begin{proof}[Proof of Proposition~\ref{prop:CondBound}]
Consider the SVD of $\Jb(\thetab) = \Jb = \Ub\Sigmab \Vb^{\top}$ with $\Sigmab = \operatorname{diag}(\sigma_1, \dots, \sigma_p)$ and partition $\Vb = [\Vb_1, \Vb_2]$ with $\Vb_1 \in \mathbb{R}^{p \times \ell}$ containing the first $\ell$ columns of $\Vb$ and $\Vb_2 \in \mathbb{R}^{p \times (p - \ell)}$ containing the other $p - \ell$ columns. Note that we dropped the $\thetab$ dependence in the notation for convenience. We first upper bound the largest singular value of $\Jb\Gammab$ by noting that 
\[
\sigma_{\text{max}}^2(\Jb\Gammab) = \lambda_{\text{max}}(\Gammab^{\top}\Jb^{\top}\Jb\Gammab) = \max_{\|\xb\|_2 = 1} \xb^{\top}\Gammab^{\top}\Jb^{\top}\Jb\Gammab \xb\,.
\]
Using that $\|\Jb\yb\|_2^2 \leq \sigma_1^2(\Jb)\|\yb\|_2^2$ for  $\yb \in \mathbb{R}^{p}$ and setting $\yb = \Gammab \xb$ gives
\[
\max_{\|\xb\|_2 = 1} \xb^{\top}\Gammab^{\top}\Jb^{\top}\Jb\Gammab \xb \leq \sigma_1^2(\Jb) \max_{\|\xb\|_2 = 1} \xb^{\top}\Gammab^{\top}\Gammab \xb = \sigma_1^2(\Jb)\lambda_{\text{max}}(\Gammab^{\top}\Gammab)
\]
and thus 
\begin{equation}\label{eq:UpperBoundSigmaCondProof}
\sigma_{\text{max}}(\Jb\Gammab) \leq \sigma_1(\Jb)\,,
\end{equation}
where we used that $\Gammab$ has orthogonal columns and thus $\lambda_{\text{max}}(\Gammab^{\top}\Gammab) = 1$.

Let us now turn to deriving a lower bound for the smallest singular value of $\Jb\Gammab$. For this, we consider the decomposition
\[
\Gammab^{\top}\Jb^{\top}\Jb\Gammab = \Gammab^{\top} \Vb \Sigmab^2 \Vb^{\top}\Gammab = \Gammab^{\top} [\Vb_1, \Vb_2] \begin{bmatrix}\Sigmab_1^2 & 0\\ 0 & \Sigmab_2^2\end{bmatrix} \begin{bmatrix}\Vb_1^{\top}\\\Vb_2^{\top}\end{bmatrix}\Gammab = \Gammab^{\top}\Vb_1\Sigmab_1^2\Vb_1^{\top}\Gammab + \Gammab^{\top}\Vb_2\Sigmab_2^2\Vb_2^{\top}\Gammab\,,
\]
where $\Sigmab_1^2 = \operatorname{diag}(\sigma_1^2, \dots, \sigma_{\ell}^2)$ and $\Sigmab_2^2 = \operatorname{diag}(\sigma_{\ell + 1}^2, \dots, \sigma_p^2)$ are diagonal matrices. We define the random matrix $\Gammab_1$ of size $\ell \times m$ and $\Gammab_2$ of size $(p - \ell) \times m$ as
\[
\Gammab_1 = \Vb_1^{\top}\Gammab\,,\qquad \Gammab_2 = \Vb_2^{\top}\Gammab\,,
\]
so that
\[
\Gammab^{\top}\Jb^{\top}\Jb\Gammab = \Gammab_1^{\top}\Sigmab_1^2\Gammab_1 + \Gammab_2^{\top}\Sigmab_2^2\Gammab_2\,.
\]
Because $\Vb_1$ and $\Vb_2$ have orthogonal columns, we obtain that for all $\xb \in \mathbb{R}^m$ we have 
\[
\|\Jb\Gammab \xb\|_2^2 = \|\Sigmab_1\Gammab_1 \xb\|_2^2 + \|\Sigmab_2\Gammab_2 \xb\|_2^2 \geq \|\Sigmab_1 \Gammab_1 \xb\|_2^2\,.
\]
Using that $\sigma_{\ell}(\Jb)$ is the smallest singular value of $\Sigmab_1$, we have $\|\Jb\Gammab \xb\|_2^2 \geq \sigma_{\ell}^2(\Jb)\|\Gammab_1 \xb\|_2^2$, which we can minimize over $\|\xb\|_2 = 1$ to obtain almost surely
\[
\sigma_{\text{min}}^2(\Jb\Gammab) = \min_{\|\xb\|_2 = 1} \|\Jb\Gammab \xb\|_2^2 \geq \sigma_{\ell}^2(\Jb) \min_{\|\xb\|_2 = 1}\|\Gammab_1 \xb\|_2^2 = \sigma_{\ell}^2(\Jb)\sigma_{\text{min}}^2(\Gammab_1)\,.
\]
Thus, we now need to bound the smallest singular value $\sigma_{\text{min}}(\Gammab_1)$ of $\Gammab_1$. Recall that $\Gammab_1 = \Vb_1^{\top}\Gammab$ and so Lemma~\ref{lm:W1SVDBoundNEW} is applicable and the proposition follows. 
\end{proof}

Random matrix theory offers a different approach to obtaining a result analogous to \eqref{eq:CondAGammaStatement}, which we state in the following proposition.

\begin{proposition}\label{prop:RMTBound}
Let $\Jb(\thetab) \in \mathbb{R}^{n \times p}$ with full rank $p$. Consider now integers $m, \ell \in \mathbb{N}$ with  $1 \leq m < \ell \leq p$ and fixed aspect ratios $\gamma = \frac{m}{p} \in (0, 1), \nu = \frac{\ell}{p} \in (\gamma, 1)$. For $\Gammab \sim \operatorname{Haar}(\operatorname{Stiefel}(p, m))$, there exists a random variable $\alpha(p)$ depending on $p$ and the randomness from $\Gammab$ such that
\begin{align}\label{eq:cond_asymp_haar}
    \kappa(\Jb(\thetab)\Gammab) \leq \frac{\sigma_{\text{max}}(\Jb(\thetab))}{\sigma_{\text{min}}(\Jb(\thetab)\Gammab)} \leq \alpha(p)\frac{\sigma_1(\Jb(\thetab))}{\sigma_{\ell}(\Jb(\thetab))}\,, \quad \alpha(p) \overset{\mathbb{P}}{\longrightarrow} \alpha = \frac{\sqrt{\nu(1-\gamma)} + \sqrt{\gamma(1-\nu)}}{\nu - \gamma} 
\end{align}
for $p \to \infty$. 
Furthermore, for $\epsilon > 0$, there exists $P \in \mathbb{N}$ and a constant $C > 0$ depending on $\epsilon, \gamma, \nu$ only such that for all $p \geq P$,
\begin{equation}\label{eq:RMTProofCond:FinalRateResult}
    \PP\sbr{\frac{\sigma_{\text{max}}(\Jb(\thetab))}{\sigma_{\text{min}}(\Jb(\thetab)\Gammab)} \geq \kappa(\Jb(\thetab)\Gammab) > \rbr{1 + C p^{-2/3}} \alpha \frac{\sigma_1(\Jb(\thetab))}{\sigma_\ell(\Jb(\thetab))}} < \epsilon\,.
\end{equation}
\end{proposition}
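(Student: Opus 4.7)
The plan is to use the same SVD-based decomposition as in the proof of \Cref{prop:CondBound} to reduce the problem to an asymptotic question about the smallest singular value of a Jacobi/MANOVA random matrix, then to invoke classical random matrix theory to evaluate the limit and the edge fluctuation rate. Writing $\Jb(\thetab) = \Ub\Sigmab\Vb^{\top}$ and partitioning $\Vb = [\Vb_1, \Vb_2]$ with $\Vb_1 \in \R^{p \times \ell}$ collecting the top-$\ell$ right singular vectors, the same computation as in the proof of \Cref{prop:CondBound} yields almost surely
\[
\sigma_{\max}(\Jb(\thetab)\Gammab) \le \sigma_1(\Jb(\thetab))\,,\qquad \sigma_{\min}(\Jb(\thetab)\Gammab) \ge \sigma_{\ell}(\Jb(\thetab))\,\sigma_{\min}(\Vb_1^{\top}\Gammab)\,.
\]
Setting $\alpha(p) := 1/\sigma_{\min}(\Vb_1^{\top}\Gammab)$ then makes the chain of inequalities in \eqref{eq:cond_asymp_haar} hold deterministically for every $p$, so the only remaining task is to analyze $\alpha(p)$ as $p \to \infty$.

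Next, I would identify the distribution of $\Vb_1^{\top}\Gammab$ explicitly. By rotational invariance of the Haar law and because $\Vb_1$ has orthonormal columns, $\Vb_1^{\top}\Gammab$ is equal in distribution to the top $\ell \times m$ sub-block of a Haar matrix on $\operatorname{Stiefel}(p,m)$. Its $m$ squared singular values are the squared cosines of the principal angles between the fixed subspace $\range(\Vb_1)$ and the random subspace $\range(\Gammab)$. Parametrizing $\Gammab \stackrel{d}{=} \Wb(\Wb^{\top}\Wb)^{-1/2}$ with $\Wb \in \R^{p \times m}$ an $\iid$ Gaussian matrix and splitting $\Wb$ into an $\ell \times m$ block $\Wb_1$ and a $(p-\ell) \times m$ block $\Wb_2$, a short computation shows these squared singular values are the eigenvalues of the MANOVA matrix $(\Wb_1^{\top}\Wb_1 + \Wb_2^{\top}\Wb_2)^{-1}\Wb_1^{\top}\Wb_1$ formed from two independent Wishart summands of dimensions $\ell$ and $p-\ell$, \ie, the classical Jacobi ensemble on $[0,1]$ with parameters determined by $(\ell, m, p)$.

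The heart of the argument is then to invoke classical random matrix theory for the Jacobi ensemble in the proportional regime $m/p \to \gamma$, $\ell/p \to \nu$ with $0 < \gamma < \nu < 1$. The empirical spectrum converges weakly to the Wachter distribution, whose support has lower edge $\lambda_- = \rbr{\sqrt{\nu(1-\gamma)} - \sqrt{\gamma(1-\nu)}}^{2}$, and the identity $\rbr{\sqrt{\nu(1-\gamma)} - \sqrt{\gamma(1-\nu)}}\rbr{\sqrt{\nu(1-\gamma)} + \sqrt{\gamma(1-\nu)}} = \nu - \gamma$ immediately verifies $\lambda_-^{-1/2} = \alpha$. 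The standard edge-convergence companion to Wachter's theorem gives convergence in probability of the smallest squared singular value to $\lambda_-$, hence $\alpha(p) \overset{\PP}{\longrightarrow} \alpha$. For the quantitative bound \eqref{eq:RMTProofCond:FinalRateResult}, I would invoke the Tracy--Widom edge theorem for the Jacobi ensemble at the (soft) lower edge, which gives fluctuations on the $p^{-2/3}$ scale, and push this rate through the smooth map $x \mapsto x^{-1/2}$ by a first-order Taylor expansion around $\lambda_-$: given $\epsilon > 0$, picking a tail cutoff $t_\epsilon$ with $F_{\mathrm{TW}}(-t_\epsilon) < \epsilon$ and absorbing all $\gamma,\nu$-dependent factors into a constant $C$ then delivers the claimed tail.

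The hard part will be bookkeeping rather than any genuinely new analysis: (i) matching the normalization conventions used in the various random matrix theory references for the Jacobi/MANOVA ensemble, and (ii) invoking the Tracy--Widom theorem at the \emph{lower} soft edge (as opposed to the much more commonly stated upper edge). The strict inequality $\gamma < \nu$ keeps $\lambda_- > 0$, placing us squarely in the soft-edge regime and avoiding the hard-edge (Bessel kernel) complications that would arise when $\gamma = \nu$.
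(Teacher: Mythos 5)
Your proposal is correct and follows essentially the same route as the paper: both reduce to $\alpha(p) = 1/\sigma_{\min}(\Vb_1^{\top}\Gammab)$ via the decomposition from \Cref{prop:CondBound}, identify the squared singular values with eigenvalues of a Jacobi/MANOVA ensemble (the paper via a QR argument with $\Gb = \Qb\Rb$, you via the equivalent $\Gammab \stackrel{d}{=} \Wb(\Wb^{\top}\Wb)^{-1/2}$ parametrization), invoke Wachter's law for the limit $\lambda_-$, and use Tracy--Widom soft-edge fluctuations (the paper cites \cite{holcomb2012edge}) pushed through $x \mapsto x^{-1/2}$ for the $p^{-2/3}$ rate.
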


\begin{proof}[Proof of \Cref{prop:RMTBound}]
Building on the proof of \Cref{prop:CondBound}, we need to lower bound $\sigma_{\text{min}}(\Gammab_1) = \lambda_{\text{min}}(\Gammab_1^{\top}\Gammab_1)^{1/2}$. 

We first show that the law of the eigenvalues of $\Gammab_1^\top \Gammab_1$ is the same as the law of the eigenvalues of Jacobi matrices, which are well studied; see, e.g., \cite[Theorem 3.1]{wachter1980limiting} and \cite[Proposition 1.1]{dumitriu2012global}.
Recall that for two independent Gaussian random matrices, $\Gb_1 \sim \Ncal(0,1)^{\ell \times m}$ and $\Gb_2 \sim \Ncal(0,1)^{(p-\ell) \times m}$, the $m \times m$ matrix $\Wb_1 = \Gb_1^\top \Gb_1 \sim \mathrm{Wishart}(\Ib_m, \ell)$ and the $m \times m$ matrix $\Wb_2 = \Gb_2^\top \Gb_2 \sim \mathrm{Wishart}(\Ib_m, p-\ell)$ follow the Wishart distribution, and for $\ell > m$, the $m \times m$ matrix $\Wb_1(\Wb_1 + \Wb_2)^{-1}$ follows the Jacobi ensemble (more generally, $\beta$-Jacobi ensemble with $\beta=1$, corresponding to real matrices).
Now, we observe that $\Gb = [\Gb_1^\top,\Gb_2^\top]^\top \sim \Ncal(0,1)^{p \times m}$ and let $\Gb = \Qb \Rb$ be the reduced QR decomposition of $\Gb$ where $\Qb \in \mathrm{Stiefel}(p,m)$ and $\Rb \in \R^{m \times m}$ is an upper triangular matrix that is invertible almost surely. 
Let $\Qb = [\Qb_1^\top,\Qb_2^\top]^\top$ where $\Qb_1 \in \R^{\ell \times m}$ contains the first $\ell$ rows in $\Qb$. Because $\Qb$ is Haar-distributed on $\operatorname{Stiefel}(p, m)$ and independent of $\Rb$, the marginal law of $\Qb_1$ coincides with $\Gammab_1$ so that $\Qb_1 \stackrel{d}{=} \Gammab_1$. 
Because $\Gammab_1^{\top}\Gammab_1$ and $\Qb_1^{\top}\Qb_1$ are real and symmetric, the map to their sorted eigenvalues is measurable and thus preserved in equality in distribution. Furthermore,  $\Qb_1^\top \Qb_1$ and the Jacobi matrix $\Wb_1(\Wb_1 + \Wb_2)^{-1}$  are similar
\begin{align*}
        \Wb_1(\Wb_1 + \Wb_2)^{-1} = \Gb_1^\top \Gb_1 (\Gb^\top \Gb)^{-1} = \Rb^\top \Qb_1^\top \Qb_1 \Rb(\Rb^\top \Rb)^{-1} = \Rb^\top \Qb_1^\top \Qb_1 (\Rb^\top)^{-1}.
    \end{align*}
Thus, the eigenvalues of $\Gammab_1^{\top}\Gammab_1$ are in distribution equal to the eigenvalues of matrices that are distributed as $\Wb_1(\Wb_1 + \Wb_2)^{-1}$. 

To study the smallest eigenvalue 
$\lambda_{\text{min}}(\Gammab_1^{\top}\Gammab_1)$, we invoke \cite[Theorem 4]{holcomb2012edge}, which studies the eigenvalues of a matrix $\Bb = \Ib_m - \Wb_1(\Wb_1 + \Wb_2)^{-1}$. Thus, bounding the largest eigenvalue of $\Bb$ leads to a lower bound on the smallest eigenvalue of  $\Wb_1(\Wb_1 + \Wb_2)^{-1}$; see also \cite[Remark~5]{holcomb2012edge}. We summarize the result of \cite[Theorem~4, Remark~5]{holcomb2012edge} as follows: For matrices distributed as $\Wb_1(\Wb_1 + \Wb_2)^{-1}$ (and thus as $\Gammab_1^{\top}\Gammab_1$), there exists a constant $C_\tau(\gamma,\nu) > 0$ independent of $p$ so that $(\lambda_{\min}(\Gammab_1^\top \Gammab_1) - \lambda_-) / \tau_p$ converges in distribution to $\Lambda$ as $p \to \infty$, where $\lambda_{\pm} = (\sqrt{\nu(1-\gamma)} \pm \sqrt{\gamma(1-\nu)})^2$, $\tau_p = C_\tau(\gamma,\nu) p^{-2/3}$ and $\Lambda$ is a random variable that follows the normalized orthogonal Tracy-Widom law \citep{tracy1996orthogonal}, which is universal; only the centering $\lambda_-$ and scaling $C_{\tau}(\gamma, \nu)$ depend on the aspect ratios $\gamma$ and $\nu$. Notice that $\ell > m$, which is required for this statement to hold, and that the aspect ratios are fixed so that \cite[Remark~6]{holcomb2012edge} is applicable.

For $\epsilon > 0$, pick now the quantile $t_{\epsilon} > 0$ so that $\mathbb{P}[\Lambda \leq -t_{\epsilon}] < \epsilon/4$ and $\mathbb{P}[\Lambda \geq t_{\epsilon}] < \epsilon/4$. Such a $t_{\epsilon}$ exists because $\Lambda$ has a continuous distribution with tails decaying to zero. Because of the convergence in distribution to $\Lambda$, there exist $P_\epsilon \in \N$ so  that for all $p \ge P_\epsilon$
\[
|\mathbb{P}[(\lambda_{\min}(\Gammab_1^\top \Gammab_1) - \lambda_-) / \tau_p \leq -t_{\epsilon}] - \mathbb{P}[ \Lambda \leq -t_{\epsilon}]| < \epsilon/4\,,
\]
holds, as well as analogously $|\mathbb{P}[(\lambda_{\min}(\Gammab_1^\top \Gammab_1) - \lambda_-) / \tau_p \geq t_{\epsilon}] - \mathbb{P}[\Lambda \geq t_{\epsilon}]| < \epsilon/4$.  
Then, for $p \geq P_{\epsilon}$, we have $\mathbb{P}[|(\lambda_{\min}(\Gammab_1^\top \Gammab_1) - \lambda_-) / \tau_p| \geq t_{\epsilon}] \leq \epsilon$. 

To obtain a bound for the singular values, notice that
\[
|\sqrt{x} - \sqrt{\lambda_-}| = \frac{|x - \lambda_-|}{\sqrt{x} + \sqrt{\lambda_-}} \leq \frac{|x - \lambda_-|}{\sqrt{\lambda_-}}
\]
holds for $x \geq 0$. Thus, we can set $C(\epsilon,\gamma,\nu) = t_{\epsilon}C_{\tau}(\gamma, \nu)/\sqrt{\lambda_-}$ and obtain 
\begin{align}\label{eq:pf_tw_sval}
        \PP\sbr{|\sigma_{\min}(\Gammab_1) - \sqrt{\lambda_-}| \geq C(\epsilon,\gamma,\nu)\,p^{-2/3}} \leq \epsilon
    \end{align}
for all $p \ge P_\epsilon$.
In particular, for the fixed aspect ratios $0 < \gamma < \nu < 1$, equation \eqref{eq:pf_tw_sval} implies that $\sigma_{\min}(\Gammab_1)$ converges in probability, 
    \begin{align*}
        \sigma_{\min}(\Gammab_1) \overset{\PP}{\longrightarrow} \sqrt{\lambda_-} = \sqrt{\nu(1-\gamma)} - \sqrt{\gamma(1-\nu)},
    \end{align*}
   as $p \to \infty$. 
    
For the rate of convergence, we again leverage \eqref{eq:pf_tw_sval} and pick $P_{\epsilon}$ so large that additionally $C (\epsilon, \gamma, \nu) p^{-2/3}/\sqrt{\lambda_-} \in (0, 1/2]$. Such a $P_{\epsilon}$ has to exist because $\lambda_{-} > 0$. For $p \geq P_{\epsilon}$, we then have $\sqrt{\lambda_-} - C (\epsilon, \gamma, \nu) p^{-2/3} > 0$ and thus with probability at least $1 - \epsilon$ that
    \[
    \frac{1}{\sigma_{\text{min}}(\Gammab_1)} \leq \frac{1}{\sqrt{\lambda_{-}} - C(\epsilon, \gamma, \nu) p^{-2/3}} = \alpha\left(1 - \frac{C(\epsilon, \gamma, \nu)}{\sqrt{\lambda_-}}p^{-2/3}\right)^{-1}\,,
    \]
    where we set $\alpha = 1/\sqrt{\lambda_-}$. For $p \geq P_{\epsilon}$, we further have $C (\epsilon, \gamma, \nu) p^{-2/3}/\sqrt{\lambda_-} \leq 1/2$. Using $(1 - x)^{-1} \leq 1 + 2x$ for $x \in [0, 1/2]$, we obtain
    \[
    \alpha\left(1 - \frac{C(\epsilon, \gamma, \nu)}{\sqrt{\lambda_-}}p^{-2/3}\right)^{-1} \leq \alpha\left(1 + 2 \frac{C(\epsilon, \gamma, \nu)}{\sqrt{\lambda_-}}p^{-2/3}\right) = \alpha\left(1 + C_{\alpha}(\epsilon, \gamma, \nu)p^{-2/3}\right)\,,
    \]
    where $C_{\alpha}(\epsilon, \gamma, \nu) = 2 C(\epsilon, \gamma, \nu)/\sqrt{\lambda_-}$. 
    Thus, setting $\alpha(p) = 1/\sigma_{\text{min}}(\Gammab_1)$ we have
    \[
\mathbb{P}\left[\alpha(p) > \alpha(1 + C_{\alpha}(\epsilon, \gamma, \nu)p^{-2/3})\right] \leq \mathbb{P}\left[\sigma_{\min}(\Gammab_1) < \sqrt{\lambda_{-}} - C(\epsilon, \gamma, \nu)p^{-2/3}\right] < \epsilon\,,
    \]
    for any $p \geq P_{\epsilon}$, which implies \eqref{eq:RMTProofCond:FinalRateResult}.
\end{proof}

\begin{remark}\label{rm:GaussianVsUnitary}
Let us compare the scaling $\alpha$ for a unitary random embedding $\Gammab \sim \operatorname{Haar}(\operatorname{Stiefel}(p, m))$ derived in \Cref{prop:RMTBound} to the scaling for a Gaussian random embedding $\Gb \sim \mathcal{N}(0, 1/m)^{p \times m}$. Again building on the proof of \Cref{prop:CondBound}, we need to bound
\[
\kappa(\Jb(\thetab)\Gb)  \leq \frac{\sigma_{\max}(\Vb(\thetab)^\top \Gb)}{\sigma_{\min}(\Gb_1(\thetab))}\frac{\sigma_{1}({\Jb(\thetab))}}{\sigma_{\ell}(\Jb(\thetab))} \,,
\]
where $\Gb_1(\thetab)  = \Vb_1(\thetab)^{\top}\Gb$ with $\Vb_1(\thetab) \in \mathbb{R}^{p \times \ell}$ containing the first $\ell$ right-singular vectors of $\Jb(\thetab)$.   
By the rotation invariance of Gaussian random matrices, for any orthogonal matrix $\Vb \in \R^{p \times p}$, $\Vb^\top \Gb$ and $\Gb$ follow the same distribution. Therefore, $\sigma_{\max}(\Vb(\thetab)^\top \Gb) \overset{d}{=} \sigma_{\max}(\Gb)$. 
Then, \cite{geman1980limit} shows that for fixed $\gamma = \frac{m}{p} \in (0,1)$, as $p \to \infty$, $\sigma_{\max}(\Gb) \overset{a.s.}{\longrightarrow} 1 + \gamma^{-1/2}$ converges almost surely.
At the same time, the rotation invariance also suggests that $\Gb_1(\thetab) = \Vb_1(\thetab)^\top \Gb \sim \Ncal(0,1/m)^{\ell \times m}$, and thus \cite{silverstein1985smallest} implies that with the fixed aspect ratios $0 < \gamma < \nu < 1$, as $p \to \infty$, $\sigma_{\min}(\Gb_1(\thetab)) \overset{a.s.}{\longrightarrow} \sqrt{\nu/\gamma} - 1$ converges almost surely, and thus overall 
\begin{align}\label{eq:cond_asymp_gaussian}
    \frac{\sigma_{\max}(\Vb(\thetab)^\top \Gb)}{\sigma_{\min}(\Gb_1(\thetab))} \overset{a.s.}{\longrightarrow} \varrho = \frac{1 + \gamma^{-1/2}}{\sqrt{\nu/\gamma} - 1} = \frac{1 + \sqrt{\gamma}}{\sqrt{\nu} - \sqrt{\gamma}} \quad \t{as}\quad p \to \infty.
\end{align}
Now notice that for any fixed aspect ratios $0 < \gamma < \nu < 1$, we have
\begin{align}\label{eq:haar_lowerrho_gauss}
    \alpha = \frac{\sqrt{\nu(1-\gamma)} + \sqrt{\gamma(1-\nu)}}{\nu - \gamma} < \frac{\sqrt{\nu} + \sqrt{\gamma}}{\nu - \gamma} < \frac{1 + \sqrt{\gamma}}{\sqrt{\nu} - \sqrt{\gamma}} = \varrho.
\end{align}
The advantage of random unitary embeddings compared to Gaussian embeddings extends beyond the tighter upper bound of the condition number. In particular, \eqref{eq:haar_lowerrho_gauss} coincides with the numerical experiments in \Cref{sec:NumExp:Sketching}, which indicate that random unitary embeddings typically lead to smaller condition numbers than Gaussian embeddings in practice.\end{remark}

\subsection{Variance and bias of sketched increments}

We now discuss \Cref{asm:randomized_constraints_biased} for  
random embeddings $\Gammab \sim \operatorname{Haar}(\operatorname{Stiefel}(p, m))$.  

\begin{proposition}\label{prop:least_square_randomized_state_estimation}
    Let $\Gammab \sim \operatorname{Haar}(\operatorname{Stiefel}(p, m))$, then the randomized increment function \eqref{eq:least_square_randomized_state_estimation} satisfies \Cref{asm:randomized_constraints_biased}(i) with $\Cbias \le 1$.
\end{proposition}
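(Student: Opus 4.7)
The plan is to identify $\wt\Fb(\thetab;\Gammab)$ as an oblique projection of $\Fb(\thetab)$, use the left-invariance of the Haar measure to reduce to the case of a diagonal Gram matrix $\Ab = \Jb(\thetab)^\top \Jb(\thetab)$, and then exploit a sign-flip symmetry to show that the averaged projection $\Mb = \E[\Pb_\Gammab]$ is diagonal with entries in $[0,1]$.

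Concretely, I would first observe that $\Ab = \Jb(\thetab)^\top \Jb(\thetab)$ is positive definite under the full-rank assumption on $\Jb$, that $\Fb(\thetab) = \Ab^{-1}\Jb(\thetab)^\top \fb(\thetab)$, and from the normal equations for \eqref{eq:least_square_randomized_state_estimation} that $\wt\Fb(\thetab;\Gammab) = \Pb_\Gammab \Fb(\thetab)$, where $\Pb_\Gammab = \Gammab(\Gammab^\top \Ab \Gammab)^{-1}\Gammab^\top \Ab$ is the oblique projection onto $\range(\Gammab)$ that is orthogonal in the $\Ab$-inner product. Consequently $\mub(\thetab) = \Mb\Fb(\thetab)$ with $\Mb = \E[\Pb_\Gammab]$, and the claim reduces to showing $\|(\Mb-\Ib)\Fb(\thetab)\|_2 \le \|\Fb(\thetab)\|_2$.

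Next, I would diagonalize $\Ab = \Ub\Lambdab\Ub^\top$ and set $\wt\Gammab = \Ub^\top\Gammab$, which is again Haar distributed by left-invariance. A short manipulation gives $\Pb_\Gammab = \Ub\wt\Pb_{\wt\Gammab}\Ub^\top$ with $\wt\Pb_{\wt\Gammab}$ the analogous oblique projection for the diagonal matrix $\Lambdab$, so after the change of variable $\vb = \Ub^\top\Fb(\thetab)$ the task is to prove $\|(\wt\Mb - \Ib)\vb\|_2 \le \|\vb\|_2$ with $\wt\Mb = \E[\wt\Pb_{\wt\Gammab}]$. For any sign diagonal $\Db$, the identities $\Db\Lambdab\Db = \Lambdab$ and $\Db\wt\Gammab \overset{d}{=} \wt\Gammab$ yield $\wt\Pb_{\Db\wt\Gammab} = \Db\wt\Pb_{\wt\Gammab}\Db$ after a direct computation, hence $\Db\wt\Mb\Db = \wt\Mb$ for every such $\Db$, which forces $\wt\Mb = \diag(\pi_1,\dots,\pi_p)$.

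To conclude, I would relate $\wt\Pb_{\wt\Gammab}$ to the standard orthogonal projector $\Pib_{\wt\Gammab} = \Lambdab^{1/2}\wt\Gammab(\wt\Gammab^\top\Lambdab\wt\Gammab)^{-1}\wt\Gammab^\top\Lambdab^{1/2}$ via $\wt\Pb_{\wt\Gammab} = \Lambdab^{-1/2}\Pib_{\wt\Gammab}\Lambdab^{1/2}$; since $\Lambdab^{\pm 1/2}$ is diagonal, the diagonals of $\wt\Pb_{\wt\Gammab}$ and $\Pib_{\wt\Gammab}$ coincide entrywise, and those of the orthogonal projector $\Pib_{\wt\Gammab}$ lie in $[0,1]$ because $\mathbf{0} \preceq \Pib_{\wt\Gammab} \preceq \Ib$. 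Taking expectations gives $\pi_i \in [0,1]$, so $\|(\wt\Mb - \Ib)\vb\|_2^2 = \sum_{i=1}^p (1-\pi_i)^2 v_i^2 \le \|\vb\|_2^2$, proving $\Cbias \le 1$. The main obstacle is that the pointwise quantity $\|\wt\Fb(\thetab;\Gammab) - \Fb(\thetab)\|_2$ can exceed $\|\Fb(\thetab)\|_2$ by a factor as large as $\sqrt{\kappa(\Ab)}$ because $\Pb_\Gammab$ is oblique, so a naive Jensen-type bound is insufficient; averaging via Haar invariance together with the diagonal reduction is essential.
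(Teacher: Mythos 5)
Your proof is correct and is essentially the same as the paper's: both diagonalize the Gram matrix (equivalently, take the SVD of $\Jb(\thetab)$), use left-invariance of the Haar measure to reduce to the diagonal case, invoke the sign-flip symmetry $\Db\Gammab \stackrel{d}{=} \Gammab$ to force the averaged projection to be diagonal, and then use $\boldsymbol 0 \preceq \Pib \preceq \Ib_p$ for the associated orthogonal projector. The only cosmetic difference is that you phrase the argument via the oblique projector $\Pb_\Gammab$ and its diagonal-conjugate orthogonal version, while the paper works directly with $\Vb\Sigmab^{-1}\Pib_\Qb\bb$ and exploits that the diagonal $\Ib_p-\E[\Pib_\Qb]$ commutes with $\Sigmab$.
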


\begin{proof}[Proof of \Cref{prop:least_square_randomized_state_estimation}]
    Because $\Gammab$ has orthonormal columns, we obtain  $\rank(\Jb(\thetab)\Gammab)=m$ for any matrix $\Jb(\thetab)$ with $\rank(\Jb(\thetab))=p$, so that the least-squares problem in \eqref{eq:least_square_randomized_state_estimation} has a unique solution, and therefore the randomized increment function is well-defined. Recall \Cref{asm:randomized_constraints_biased} and in particular note that we can write 
    $\Fb(\thetab) = \Jb(\thetab)^\dagger \fb(\thetab)$ and $\wt{\Fb}(\thetab; \Gammab) = \Gammab (\Jb(\thetab)\Gammab)^\dagger \fb(\thetab)$. For a given $\thetab \in \R^p$, we define the (reduced) SVD $\Jb(\thetab) = \Ub(\thetab)\Sigmab(\thetab)\Vb(\thetab)^\top$ where $\Ub(\thetab) \in \R^{n \times p}$ and $\Vb(\thetab) \in \R^{p \times p}$ contain orthonormal columns, and $\Sigmab(\thetab) \in \R^{p \times p}$ is a diagonal matrix consisting of positive singular values in descending order. We now drop the $\thetab$ dependence in the notation to ease exposition.
    
    Let $\Vb^\top \Gammab = \Qb\Rb$ be a (reduced) QR decomposition of $\Vb^\top \Gammab$ where $\Qb \in \R^{p \times m}$ contains orthonormal columns and the upper triangular matrix $\Rb \in \R^{m \times m}$ is invertible. Notice that since $\Gammab$ is rotation-invariant, $\Qb \stackrel{d}{=} \Gammab$ holds. Denoting $\bb = \Ub^\top \fb \in \R^p$, write $\Fb = \Vb \Sigmab^{-1} \bb$ and
    \begin{align*}
        \wt{\Fb}(\thetab;\Gammab) = \Vb (\Vb^\top \Gammab) (\Sigmab \Vb^\top \Gammab)^\dagger \bb = \Vb \Qb (\Sigmab \Qb)^\dagger \bb = \Vb \Qb (\Qb^\top \Sigmab^2 \Qb)^{-1} \Qb^\top \Sigmab \bb,
    \end{align*}
    where the first equality holds because $\Ub$ has full column rank and orthonormal columns, the second equality because $\Rb$ is invertible, and the third equality follows from the definition of the Moore-Penrose pseudo inverse.

    Denote with $\Pib_{\Qb} = \Sigmab \Qb (\Qb^\top \Sigmab^2 \Qb)^{-1} \Qb^\top \Sigmab$ the orthogonal projection onto the range of $\Sigmab \Qb$ and notice that $\wt{\Fb}(\thetab;\Gammab) = \Vb \Sigmab^{-1} \Pib_{\Qb} \bb$.
    Leveraging the definition of $\mub(\thetab)$ given in \eqref{eq:Rand:MuDef}, 
    we observe that
    \begin{align*}
        &\nbr{\mub(\thetab) - \Fb(\thetab)}_2^2
        = \nbr{\Vb \Sigmab^{-1} (\E_{\Qb}\sbr{\Pib_{\Qb}} \bb - \bb)}_2^2 
        = \nbr{\Sigmab^{-1} (\Ib_p - \E_{\Qb}\sbr{\Pib_{\Qb}}) \Sigmab \Sigmab^{-1} \bb}_2^2 \\
        \le &\nbr{\Sigmab^{-1} (\Ib_p - \E_{\Qb}\sbr{\Pib_{\Qb}}) \Sigmab}_2^2 \nbr{\Sigmab^{-1} \bb}_2^2
        = \nbr{\Sigmab^{-1} (\Ib_p - \E_{\Qb}\sbr{\Pib_{\Qb}}) \Sigmab}_2^2 \nbr{\Fb(\thetab)}_2^2,
    \end{align*}
    and therefore $\Cbias \le \|\Sigmab^{-1} (\Ib_p - \E_{\Qb}\sbr{\Pib_{\Qb}}) \Sigmab\|_2^2$.

    Now, we make a key claim that $\E_{\Qb}\sbr{\Pib_{\Qb}}$ is diagonal. To see this, first notice that for any diagonal matrix $\Db = \diag(d_1,\cdots,d_p)$ with each $d_i \in \{-1,1\}$, we have $\Db \Qb \overset{d}{=} \Qb$, since $\Qb \sim \mathrm{Haar}(\mathrm{Stiefel}(p,m))$ is rotation-invariant, and $\Db$ is orthogonal. Following the definition of $\Pib_{\Qb}$, we define $\Pib_{\Db\Qb}$ and observe that the diagonal matrices $\Db, \Sigmab, \Sigmab^2$ commute. Therefore
    \begin{align*}
        \Pib_{\Db\Qb} 
        = &\Sigmab \Db \Qb (\Qb^\top \Db \Sigmab^2 \Db \Qb)^{-1} \Qb^\top \Db \Sigmab
        = \Db \Sigmab \Qb (\Qb^\top \Sigmab^2 \Db^2 \Qb)^{-1} \Qb^\top \Sigmab \Db \\
        = &\Db (\Sigmab \Qb (\Qb^\top \Sigmab^2 \Qb)^{-1} \Qb^\top \Sigmab) \Db
        = \Db \Pib_{\Qb} \Db,
    \end{align*}
    where the third equality follows from $\Db^2 = \Ib_p$.
    Thus, with $\Db \Qb \overset{d}{=} \Qb$, we obtain
    \begin{align*}
        \E_{\Qb}[\Pib_{\Qb}] = \E_{\Qb} [\Pib_{\Db\Qb}] = \Db \E_{\Qb}[\Pib_{\Qb}] \Db.
    \end{align*}
    Then, $\E_{\Qb}[\Pib_{\Qb}] = \Db \E_{\Qb}[\Pib_{\Qb}] \Db$ for all $\Db$ enforces $\E_{\Qb}[\Pib_{\Qb}]$ to be diagonal. To see this, let $M_{ij}$ be the $(i,j)$-th entry of $\E_{\Qb}[\Pib_{\Qb}]$ such that $M_{ij} \neq 0$ and $i \neq j$. For any $\Db$ such that $d_i d_j = -1$, the $(i,j)$-th entry of $\Db \E_{\Qb}[\Pib_{\Qb}] \Db$ leads to a contradiction: $d_i M_{ij} d_j = - M_{ij} \neq M_{ij}$.

    Because $\E_{\Qb}\sbr{\Pib_{\Qb}}$ is diagonal, we also have that  $(\Ib_p - \E_{\Qb}\sbr{\Pib_{\Qb}})$ is diagonal. Therefore, $(\Ib_p - \E_{\Qb}\sbr{\Pib_{\Qb}})$ commutes with the diagonal matrix $\Sigmab$. Therefore, we have 
    \begin{align*}
        \Cbias \le \nbr{\Sigmab^{-1} (\Ib_p - \E_{\Qb}\sbr{\Pib_{\Qb}}) \Sigmab}_2^2 = \nbr{(\Ib_p - \E_{\Qb}\sbr{\Pib_{\Qb}}) \Sigmab^{-1} \Sigmab}_2^2 = \nbr{\Ib_p - \E_{\Qb}\sbr{\Pib_{\Qb}}}_2^2.
    \end{align*}
    Since $\Pib_{\Qb}$ is an orthogonal projection, we have $0 \preceq \Pib_{\Qb} \preceq \Ib_p$, which implies $0 \preceq \E_{\Qb}\sbr{\Pib_{\Qb}} \preceq \Ib_p$, and therefore, $\Cbias \le \nbr{\Ib_p - \E_{\Qb}\sbr{\Pib_{\Qb}}}_2^2 \le 1$.
\end{proof}

Let us now consider the constant $C_v$ controlling the variance in \Cref{asm:randomized_constraints_biased}(ii). Note that the variance corresponding to $C_v$ can be controlled with the number of replicates $q$ and the time-step size $\delta t$ (see \Cref{thm:randomized_constraints_convergence_biased}) so that the absolute value of the upper bounds for $C_v$ is less critical than for the constant $C_b$ corresponding to the bias, which cannot be directly controlled with $q$ and $\delta t$. In particular, as our numerical experiments in \Cref{sec:coeff_bias_var_powlaw} will indicate, the following bounds are loose. We now show bounds of $C_v$ that scale with the condition number of the sketched $\Jb(\thetab)\Gammab$ rather than the unsketched matrix $\Jb(\thetab)$. The bounds depend on the $\Jb(\thetab)$ at hand, which is different from the uniform bound assumed in \Cref{asm:randomized_constraints_biased}(ii). If the set of all possible $\thetab$ is compact, one can derive a uniform bound, albeit a crude one.

\begin{proposition}\label{prop:CvCbboundwrtCond}
Let $\Gammab \sim \operatorname{Haar}(\operatorname{Stiefel}(p, m))$,  then \eqref{asm:ii} holds with a constant $C_v(\Jb(\thetab)) > 0$ depending on $\Jb(\thetab)$ that satisfies:
\begin{itemize}
\item[(i)] Under the same assumptions as \Cref{prop:CondBound}, for $\epsilon, \delta \in (0, 1)$ and suitable $m \leq \ell$, the constant $C_v(\Jb(\thetab))$ is bounded as
\begin{equation}\label{eq:CVProof:Bound1}
C_v(\Jb(\thetab)) \leq (1 - \delta)\frac{1}{1 - \epsilon} \frac{p}{\ell} \frac{\sigma_1^2(\Jb(\thetab))}{\sigma_{\ell}^2(\Jb(\thetab))} + \delta \kappa^2(\Jb(\thetab))\,.
\end{equation}
\item [(ii)] Under the same assumptions as \Cref{prop:RMTBound}, for $\epsilon > 0$ with $C > 0$ and  $p$ sufficiently large, the constant $C_v(\Jb(\thetab))$ is bounded as
\begin{equation}\label{eq:CVProof:Bound2}
C_v(\Jb(\thetab)) \leq (1 - \epsilon) \left(1 + C p^{-2/3}\right)^2\alpha^2 \frac{\sigma_1^2(\Jb(\thetab))}{\sigma_{\ell}^2(\Jb(\thetab))} + \epsilon \kappa^2(\Jb(\thetab))\,.
\end{equation}
\end{itemize}
\end{proposition}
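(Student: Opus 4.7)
The plan is to reduce the problem to controlling $\E[1/\sigma_{\min}^2(\Jb(\thetab)\Gammab)]$ and then apply the high-probability sketched-conditioning bounds already proved in Propositions \ref{prop:CondBound} and \ref{prop:RMTBound}. Since $\mub(\thetab) = \E_\Gammab[\wt{\Fb}(\thetab;\Gammab)]$, the variance-second-moment inequality yields $\E\|\wt{\Fb}(\thetab;\Gammab) - \mub(\thetab)\|_2^2 \leq \E\|\wt{\Fb}(\thetab;\Gammab)\|_2^2$, so it suffices to bound the second moment.

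The key algebraic step is to rewrite the randomized increment as $\wt{\Fb}(\thetab;\Gammab) = \Gammab(\Jb(\thetab)\Gammab)^{\dagger} \Jb(\thetab) \Fb(\thetab)$. This follows from the normal equations $\Jb(\thetab)^\top \fb(\thetab) = \Jb(\thetab)^\top \Jb(\thetab) \Fb(\thetab)$, which express that the component of $\fb(\thetab)$ orthogonal to the range of $\Jb(\thetab)$ is automatically annihilated by $(\Jb(\thetab)\Gammab)^\top$. Combining this identity with submultiplicativity, $\|\Gammab\|_2 = 1$ (orthonormal columns), and $\|(\Jb(\thetab)\Gammab)^{\dagger}\|_2 = 1/\sigma_{\min}(\Jb(\thetab)\Gammab)$ yields the deterministic bound
\[
\|\wt{\Fb}(\thetab;\Gammab)\|_2 \leq \frac{\sigma_1(\Jb(\thetab))}{\sigma_{\min}(\Jb(\thetab)\Gammab)} \|\Fb(\thetab)\|_2,
\]
so that $C_v(\Jb(\thetab)) \leq \sigma_1^2(\Jb(\thetab))\,\E_\Gammab[1/\sigma_{\min}^2(\Jb(\thetab)\Gammab)]$.

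Next, I would split the expectation across a good event $\mathcal{E}$ on which the sketched lower bound on $\sigma_{\min}(\Jb(\thetab)\Gammab)$ from the relevant proposition holds, and its complement $\mathcal{E}^c$. For part (i), on $\mathcal{E}$ (probability $\geq 1-\delta$) Proposition \ref{prop:CondBound} gives $\sigma_{\min}(\Jb(\thetab)\Gammab) \geq \sqrt{(1-\epsilon)\ell/p}\,\sigma_\ell(\Jb(\thetab))$; for part (ii), on $\mathcal{E}$ (probability $\geq 1-\epsilon$) Proposition \ref{prop:RMTBound} gives $\sigma_{\min}(\Jb(\thetab)\Gammab) \geq \sigma_\ell(\Jb(\thetab))/\bigl((1 + Cp^{-2/3})\alpha\bigr)$. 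On $\mathcal{E}^c$, I would use the deterministic fallback $\sigma_{\min}(\Jb(\thetab)\Gammab) \geq \sigma_p(\Jb(\thetab))$, which holds almost surely because any unit $\xb \in \R^m$ satisfies $\|\Gammab \xb\|_2 = 1$ and hence $\|\Jb(\thetab)\Gammab \xb\|_2 \geq \sigma_p(\Jb(\thetab))$. Weighting each piece by its respective probability and multiplying by $\sigma_1^2(\Jb(\thetab))$ then produces the $\kappa^2(\Jb(\thetab))$ contribution of \eqref{eq:CVProof:Bound1} and \eqref{eq:CVProof:Bound2} exactly.

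The main obstacle is the rewriting $\wt{\Fb}(\thetab;\Gammab) = \Gammab(\Jb(\thetab)\Gammab)^{\dagger} \Jb(\thetab)\Fb(\thetab)$: without this identity, the natural bound on $\|\wt{\Fb}\|_2$ would be expressed in terms of $\|\fb(\thetab)\|_2$ rather than $\|\Fb(\thetab)\|_2$, which is incompatible with the form of \Cref{asm:randomized_constraints_biased}(ii). Once the reformulation is in place, everything else amounts to invoking the existing sketched-conditioning estimates on the good event and using the crude deterministic inequality $\sigma_{\min}(\Jb(\thetab)\Gammab) \geq \sigma_p(\Jb(\thetab))$ on the bad event.
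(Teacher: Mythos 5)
Your argument is correct and matches the paper's proof essentially step for step: the same second-moment reduction $\E\|\wt{\Fb}-\mub\|_2^2\leq\E\|\wt{\Fb}\|_2^2$, the same normal-equations identity $\wt{\Fb}=\Gammab(\Jb\Gammab)^{\dagger}\Jb\Fb$ yielding $\|\wt{\Fb}\|_2\leq\sigma_1(\Jb)\sigma_{\min}(\Jb\Gammab)^{-1}\|\Fb\|_2$, and the same split of the expectation over the high-probability event from \Cref{prop:CondBound} or \Cref{prop:RMTBound} versus its complement, where $\sigma_{\min}(\Jb\Gammab)\geq\sigma_p(\Jb)$ is used as a deterministic fallback. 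You state that last fallback inequality more explicitly than the paper does (which invokes it only via ``$\|\wt{\Fb}\|_2\leq\kappa(\Jb)\|\Fb\|_2$ also follows''), but it is the same argument.
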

\begin{proof}
We drop $\thetab$ as argument in $\Jb(\thetab)$ and just write $\Jb$ for brevity in this proof. Furthermore, note that $\Jb$ having full column rank $p$ and $\Gammab$ having full column rank $m$ and orthonormal columns implies $\operatorname{rank}(\Jb\Gammab) = m$

Because $\rank(\Jb\Gammab) = m$, we can write $(\Jb\Gammab)^{\dagger} = (\Gammab^{\top}\Jb^{\top}\Jb\Gammab)^{-1}\Gammab^{\top}\Jb^{\top}$. Using $\Fb = \Fb(\thetab) =  \Jb^{\dagger}(\thetab)\fb(\thetab)$ we obtain $\Jb^{\top}\Jb \Fb = \Jb^{\top} \fb$ and thus we can write $\tilde{\Fb}$ defined in \eqref{eq:least_square_randomized_state_estimation} as 
\begin{equation}\label{eq:Prop:CvCbBoundsIntermed0}
\tilde{\Fb} =  \Gammab(\Jb\Gammab)^{\dagger}\fb = \Gammab (\Gammab^{\top}\Jb^{\top}\Jb\Gammab)^{-1}\Gammab^{\top}\Jb^{\top}\Jb \Fb\,.
\end{equation}
Now notice that $\Jb^{\top} (\fb - \Jb \Fb) = \boldsymbol 0$ because $\Jb^{\top}\Jb\Fb = \Jb^{\top}\fb$ and thus also $(\Jb\Gammab)^{\top}(\fb - \Jb \Fb) = 0$ because the column space of $\Jb\Gammab$ is a subspace of the column space of $\Jb$. 
With \eqref{eq:Prop:CvCbBoundsIntermed0}, we have
\begin{equation}\label{eq:Prop:CvCbBoundsIntermed3}
\|\tilde{\Fb}\|_2 = \|\Gammab (\Jb\Gammab)^{\dagger}(\Jb \Fb + (\fb - \Jb\Fb))\|_2 \leq \|\Gammab\|_2 \|(\Jb\Gammab)^{\dagger}\|_2 \|\Jb\|_2 \|\Fb\|_2 = \frac{\sigma_{\text{max}}(\Jb)}{\sigma_{\text{min}}(\Jb\Gammab)} \|\Fb\|_2\,,
\end{equation}
where we used that $\|\Gammab\|_2 = 1$ because $\Gammab$ has orthonormal columns. 
We now use that $\mathbb{E}[\|X - \mathbb{E}[X]\|^2] = \mathbb{E}[\|X\|^2] - \|\mathbb{E}[X]\|^2 \leq \mathbb{E}[\|X\|^2]$ to obtain
\[
\mathbb{E}_{\Gammab}\left[\|\tilde{\Fb} - \mathbb{E}_{\Gammab}[\tilde{\Fb}]\|_2^2\right] \leq \mathbb{E}_{\Gammab}\left[\|\tilde{\Fb}\|_2^2\right]\,.
\]
Building on \Cref{prop:CondBound}, we obtain \eqref{eq:CVProof:Bound1}, where we used $\|\tilde{\Fb}\|_2 \leq \kappa(\Jb) \|\Fb\|_2$ that also follows from \eqref{eq:Prop:CvCbBoundsIntermed3}. 
Analogously, building on \Cref{prop:RMTBound}, we obtain for $\epsilon > 0$ with $C > 0$ that \eqref{eq:CVProof:Bound2} holds for sufficiently large $p$. 
\end{proof}

We numerically study the behavior of $C_b$ and $C_v$ in \Cref{sec:coeff_bias_var_powlaw} and demonstrate that the constant associated with the bias $\Cbias$ decreases monotonically as the sketching dimension $m$ increases; while $\Cvar$ first increases and then decreases as $m$ grows. The variance associated with $\Cvar$ can be made arbitrarily small by increasing either the number of replicates $q$ or decreasing the time-step size $\delta t$, the bias remains proportional to $\Cbias$.
To reduce the cumulative bias in \eqref{eq:rand_convergence_step_biased} that does not decrease with increasing $q$ or decreasing $\delta t$, we want a larger $m$ that leads to lower $\Cbias$.
However, when $m$ is close to $p$, the sketched least-squares problem in \eqref{eq:least_square_randomized_state_estimation} tends to suffer a similar numerical issue as the original least-squares problem in \eqref{eq:increment_function} since the condition number of $\Jb(\thetab)\Gammab$ would be similar to that of $\Jb(\thetab)$.
Therefore, balancing the condition number of $\Jb(\thetab)\Gammab$ (by choosing a smaller $m$) and the bias associated with $\Cbias$ (by choosing a larger $m$) is the key for choosing a suitable sketching dimension $m$ in the randomized increment function \eqref{eq:least_square_randomized_state_estimation}.

\section{Numerical experiments}\label{sec:NumExp}
In this section, we first empirically demonstrate the effect of the sketching dimension $m$ on the conditioning of randomized least-squares problems and then on the variance in form of $\Cvar$ and the bias in form of $\Cbias$. 
Then, we apply randomized time stepping to the Neural Galerkin scheme \cite{bruna2024neural,BERMAN2024389} and compare to deterministic regularization on problems based on the Schr\"odinger and Allen-Cahn equation.

\subsection{Sketching and condition number}\label{sec:NumExp:Sketching}
In this section, we numerically demonstrate \Cref{prop:RMTBound} and investigate the condition number of $\Jb(\thetab)\Gammab$. 
We consider a matrix $\Jb(\thetab) = \Ab \in \R^{n \times p}$ with a power law spectrum, $\sigma_i(\Ab) = i^{-\omega}$ for all $i=1,\cdots,p$ where we take $\omega = 2$. 
For $n \ge p > m$, the proof of \Cref{prop:RMTBound} implies that the condition number $\kappa(\Ab\Gammab)$ is independent of $n$. Therefore, we take $n = p = 1000$ in the numerical experiments without loss of generality.
To generate the matrices, we first draw random unitary matrices $\Ub \in \R^{n \times p}$ and $\Vb \in \R^{p \times p}$ as the left and right singular vectors by applying reduced QR decomposition to $n \times p$ and $p \times p$ Gaussian random matrices, respectively. Then, with the given singular values $\Sigmab = \diag(\sigma_1(\Ab),\cdots,\sigma_p(\Ab))$, we form a matrix as $\Ab = \Ub \Sigmab \Vb^\top$. 

\begin{figure}
        \centering
        \resizebox{0.99\textwidth}{!}{\input{PlotSources/upper_bounds}}
    \caption{Sketching via either random unitary or Gaussian embeddings considerably reduce the condition number of $\Ab\Gammab$ compared to $\Ab$. In agreement with \Cref{prop:RMTBound}, $\kappa(\Ab\Gammab)$ decreases as $m$ decreases and lies close to $\sigma_{1}(\Ab)/\sigma_{m}(\Ab)$. The blue lines show the averaged condition numbers $\kappa(\Ab\Gammab)$ over $10$ $\iid$ drawn $\Gammab$, with the shading indicating the maxima and minima among the $10$ trials.}
    \label{fig:cond}
\end{figure}

\begin{figure}
    \centering
        \resizebox{0.99\textwidth}{!}{\input{PlotSources/rho_estimates}}
    \caption{The random variables $1/\sigma_{\min}(\Gammab_1)^{-1}$ for $\Gammab \sim \mathrm{Haar}(\mathrm{Stiefel}(p,m))$ and $\sigma_{\max}(\Gammab)/\sigma_{\min}(\Gammab_1)$ for $\Gammab \sim \Ncal(0,1/m)^{p \times m}$ in \Cref{prop:RMTBound} and \Cref{rm:GaussianVsUnitary} that characterize the randomness of $\Gammab$ in the upper bounds concentrate tightly around their respective asymptotic limits. Here, the blue lines show averages over $10$ $\iid$ drawn $\Gammab$, with the shading indicating the maxima and minima among the $10$ trials.}
    \label{fig:rho_concentrate}
\end{figure}

In \Cref{fig:cond}, we plot the averaged condition numbers $\kappa(\Ab\Gammab)$ over $10$ $\iid$ drawn Gaussian and unitary embeddings for the embedding dimensions $m \in \{100, \dots, 500\}$ and compare to the ratio of singular values $\sigma_1(\Ab)/\sigma_m(\Ab)$ at the respective $m$. 
\Cref{prop:RMTBound} also shows the asymptotic upper bounds in \eqref{eq:cond_asymp_haar} and \eqref{eq:cond_asymp_gaussian} for unitary and Gaussian embeddings, respectively, at two different aspect ratios $\nu/\gamma = 1.2, 2.0$. 
We observe that $\kappa(\Ab\Gammab)$ is slightly larger than but close to $\sigma_1(\Ab)/\sigma_m(\Ab)$, which agrees with \Cref{prop:RMTBound}. 
For the same $\gamma = m/p$, random unitary embeddings tend to enjoy smaller $\kappa(\Ab\Gammab)$ compared to Gaussian embeddings. 
Meanwhile, $\kappa(\Ab\Gammab)$ is highly concentrated in practice with both random unitary and Gaussian embeddings.

\Cref{fig:rho_concentrate} further investigates the concentration of random variables $1/\sigma_{\text{min}}(\Gammab_1)$ for $\Gammab \sim \mathrm{Haar}(\mathrm{Stiefel}(p,m))$ and  $\sigma_{\text{max}}(\Gammab)/\sigma_{\text{min}}(\Gammab_1)$ for $\Gammab \sim \Ncal(0,1/m)^{p \times m}$ that characterize the randomness of $\Gammab$ in the upper bounds in \Cref{prop:RMTBound}; see the proof of \Cref{prop:CondBound} for a definition of $\Gammab_1$ with respect to $\Ab$ and $\Gammab$.  
We observe that both concentrate tightly around their asymptotic limits $\alpha$ and $\varrho$, respectively, while the random unitary embeddings tend to lead to stronger concentration.
Furthermore, the ratio $\sigma_{\text{max}}(\Gammab)/\sigma_{\text{min}}(\Gammab_1)$ tends to be slightly larger for Gaussian embeddings than $1/\sigma_{\text{min}}(\Gammab_1)$ for unitary embeddings at the same ratios $\gamma,\nu$, echoing the message in \eqref{eq:haar_lowerrho_gauss} and Remark~\ref{rm:GaussianVsUnitary}.

\subsection{Bias and variance of randomized least-squares increments}\label{sec:coeff_bias_var_powlaw}

For the sake of demonstration, we consider a specific scenario where $\Jb(\thetab) = \Ab \in \R^{n \times p}$ has a power law singular value decay, $\sigma_i(\Ab) = i^{-\omega}$, for some exponent $\omega > 0$ and set $n=1200$ and  $p=1000$. For $\Gammab \sim \operatorname{Haar}(\operatorname{Stiefel}(p, m))$ and $\fb \in \R^{n}$, we then consider 
\[
\nbr{(\E[\Gammab (\Ab \Gammab)^\dagger] - \Ab^\dagger) \fb}_2^2 \le \Cbias \nbr{\Ab^\dagger \fb}_2^2\,, \E[\|(\Gammab (\Ab \Gammab)^\dagger - \E[\Gammab (\Ab \Gammab)^\dagger]) \fb\|_2^2] \le \Cvar \nbr{\Ab^\dagger \fb}_2^2\,.
\]
We numerically estimate $\Cbias$ and $\Cvar$ by 
approximating the expectation over $\Gammab$ via the average over $100$ evaluations with $\iid$ random unitary embeddings $\Gammab$ and reporting the maximum ratio $\nbr{(\E[\Gammab (\Ab \Gammab)^\dagger] - \Ab^\dagger) \fb}_2^2/\nbr{\Ab^\dagger \fb}_2^2$ for an estimate of $C_b$ and $\E[\|(\Gammab (\Ab \Gammab)^\dagger - \E[\Gammab (\Ab \Gammab)^\dagger]) \fb\|_2^2]/\nbr{\Ab^\dagger \fb}_2^2$ for an estimate of $C_v$ over $4n$ independently drawn right-hand sides $\fb \sim \Ncal(\b0_n, \Ib_n)$.

The scaling of the maximum ratios estimating $\Cbias$ and $\Cvar$, respectively, with respect to $m$ is visualized in \Cref{fig:bias_var_powlaw}.
\begin{figure}
    \centering
    \resizebox{0.99\textwidth}{!}{\input{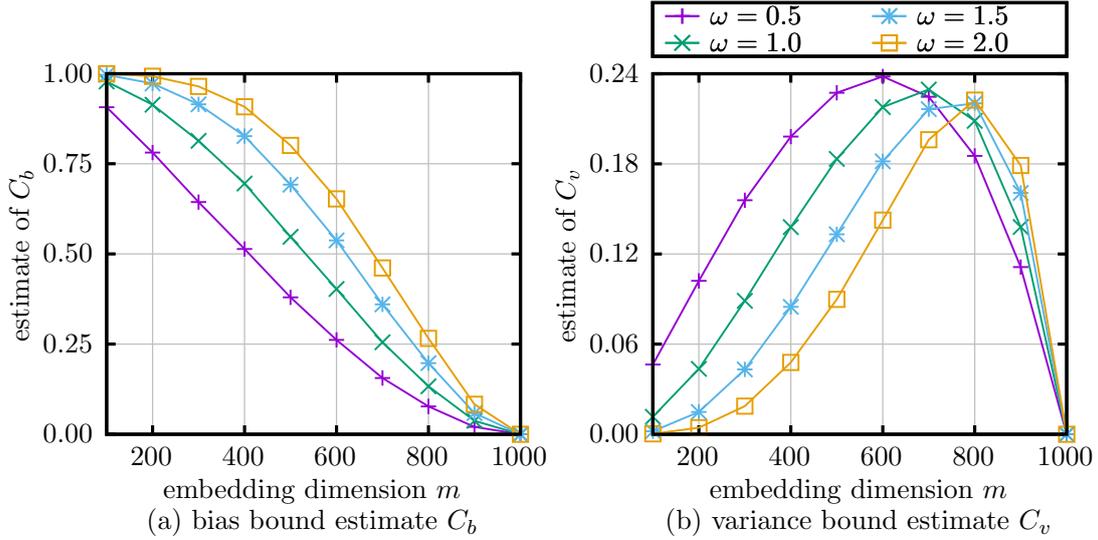}}
    \caption{Scaling of the estimates of $\Cbias$ and $\Cvar$ for random unitary embeddings $\Gammab$ of dimension $p \times m$ with respect to the sketching dimension $m$, where singular values of $\Ab \in \R^{n \times p}$ admits a power law decay, $\sigma_i(\Ab) = i^{-\omega}$ for all $i \in [p]$.}\label{fig:bias_var_powlaw}
\end{figure}
We observe that the bias characterized by the estimate of $\Cbias$ decreases monotonically as $m$ increases; whereas the estimate of $\Cvar$ corresponding to the variance first increases and then decreases as $m$ grows.
Intuitively, when $m$ is much smaller than $p$, the mean $\E[\Gammab (\Ab \Gammab)^\dagger \fb]$ contains little information. Therefore, the bias is large with $\Cbias$ close to $1$, and the variance $\Cvar$ is close to zero due to the lack of information. 
Toward the other extreme case where $m$ is close to $p$, $\E[\Gammab (\Ab \Gammab)^\dagger \fb]$ provides a good approximation of $\Ab^\dagger \fb$, which pushes both $\Cbias$ and $\Cvar$ to zero.
In addition, $\Ab$ with the faster singular value decay (larger $\omega$) tends to require a larger sketching dimension $m$ to achieve a desired decrease in $\Cbias$.

\subsection{Double-well quantum dynamics}
We demonstrate the proposed randomized time stepping on a double-well quantum dynamics problem described by the Schr\"odinger equation. 

\begin{figure}
    \centering
    \resizebox{\textwidth}{!}{\input{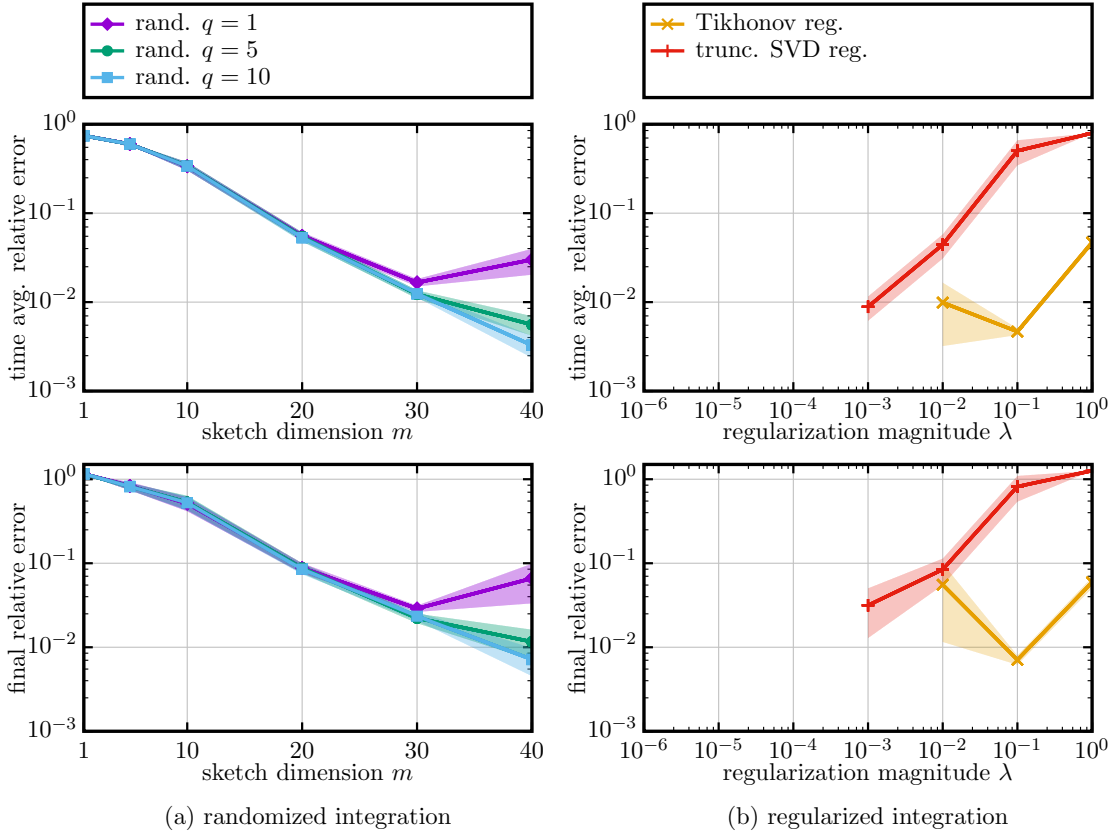}}
    \caption{Double-well quantum dynamics: Randomized time stepping with sketched least-squares problem lead to comparable errors as regularization in this example. This is in agreement with the results presented in Section~\ref{sec:RTSInc} that randomized time stepping with sketching has a regularization effect on Neural Galerkin schemes.}
    \label{fig:schroedinger_collection_rel_err}
\end{figure}

\begin{figure}
    \centering
    \resizebox{\textwidth}{!}{\input{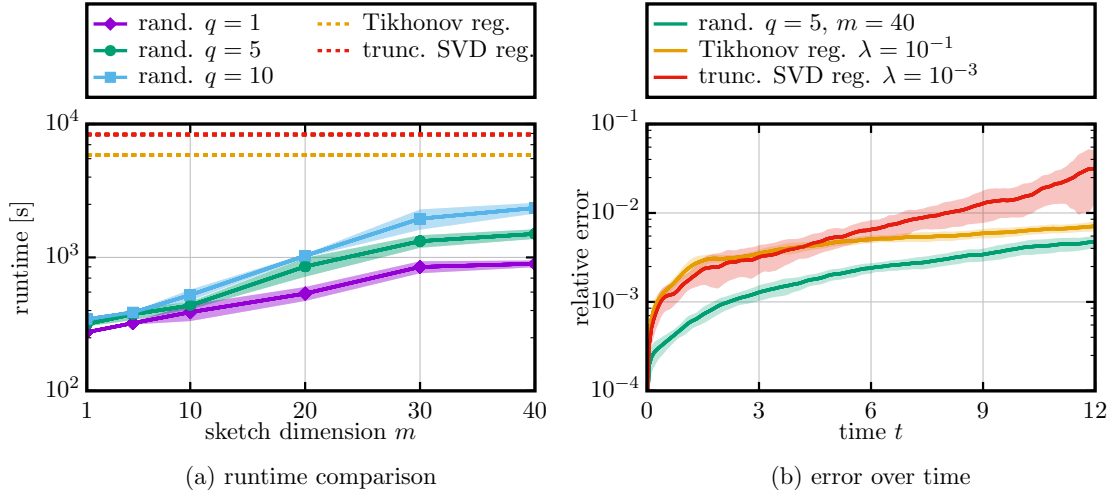}}
    \caption{Double-well quantum dynamics: Randomized time stepping can be faster than regularization because sketching the least-squares problem means that only $m \ll p$ out of all $p$ parameters of the neural network are updated at each time step.}
    \label{fig:schroedinger_collection_runtime}
\end{figure}

\begin{figure}
    \resizebox{\textwidth}{!}{\input{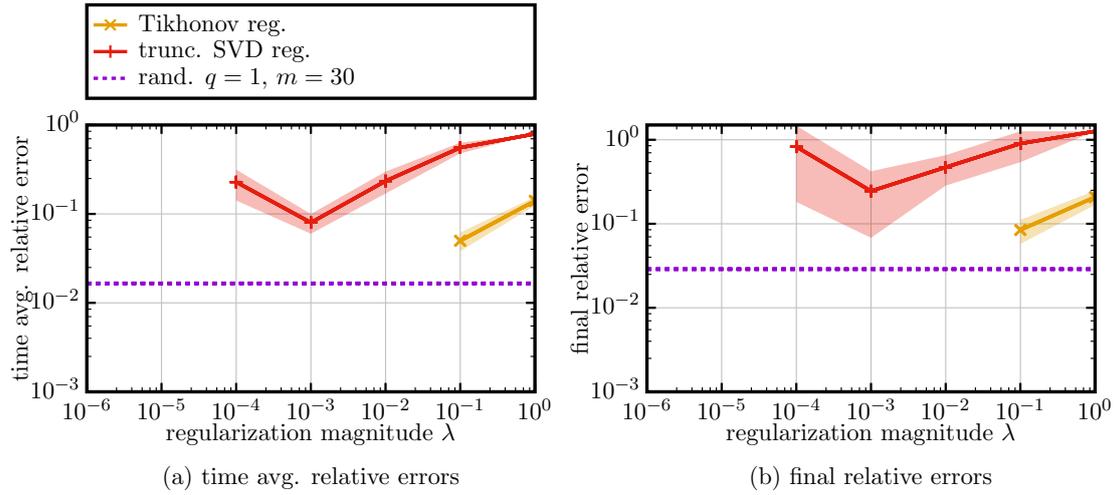}}
\caption{Double-well quantum dynamics: We fix the sketching dimension $m = 30$ so that randomized time stepping updates only $m = 30 \ll p$ out of all $p = 1363$ parameters of the network. We compare to regularization on a smaller network of size $p^{\prime} = 39$ so that about the same number of parameters are updated at each time step. Randomized time stepping achieves up to one order of magnitude lower errors here.}
\label{fig:schroedinger_relerr_smallnetwork}
\end{figure}

\subsubsection{Setup}
Consider the one-dimensional Schr\"odinger equation, 
   \begin{align} 
   \label{eq:schroed}
    \mathrm{i}\dot \psi(t, x) = H\psi(t, x),
    \end{align}
with wave function $\psi$, operator $H=-\frac{1}{2}\partial_{x}^2+\alpha_2 x+\alpha_4 x^4$, and initial condition
$\psi_0(x)=\pi^{-1/4}\mathrm{e}^{-(x+2)^2/2}$. Following~\cite{feischl2024regularized}, we set $\alpha_2 = -0.125$ and $\alpha_4 = 0.125^2$ to create a double-well potential. 
We impose periodic boundary conditions in the large spatial interval $[-6,6)$, which  mimics solving over the real line because the wave function decays fast enough. The time interval is $[0,12]$. To obtain a benchmark solution, we use a spectral method of order 500 in space. For integration in time, we use fourth-order Runge-Kutta with adaptive time stepping guided by a relative and absolute error tolerance of $10^{-5}$. 
We compare the benchmark solution to solutions obtained with  Neural Galerkin \cite{bruna2024neural,BERMAN2024389} when using various regularization schemes as well as the randomized time stepping. In all experiments, we parameterize $\psi$ using a deep neural network, which takes as input the spatial coordinate and outputs the real part and imaginary part of $\psi$ at that spatial coordinate. The neural network parameters are evolved in time using an explicit Euler scheme with fixed time-step size $\delta t = 10^{-3}$. We take a total of $K = 12\times 10^{3}$ time steps. 
We use feedforward neural networks. The input layer is always a  periodic embedding layer as in~\cite{BERMAN2024389}. As nonlinear activation function, we use the swish-function in each layer, which is defined as $\sigma(x)=x/(1+\exp(-x))$. If not otherwise stated, then there are four hidden layers with 20 nodes each, so that the total number of components of $\bftheta(t)$ is $p = 1363$. 
The weight vector $\bftheta_0$ corresponding to the initial condition is fitted via the mean squared error loss to $\psi_0$ at the 2000 equidistant points in the spatial domain. For fitting the initial condition, we use $10^5$ ADAM iterations~\cite{KingmaBa2014} with a learning rate ranging from $10^{-2}$ to $10^{-12}$ according to a cosine decay schedule, which smoothly decreases the learning rate without restarts~\cite{LoshchilovHutter2016}.

Each experiment is repeat 20 times with a different random draw of the initial weights of the network parametrization. The initial weights are drawn i.i.d.~from a standard normal random variable.  
Because several experiments will lead to unstable behavior, we use the best ten out of these 20 replicates to be able to report meaningful statistics. We will also provide the number of unstable replicates that numerically lead to large errors or not-a-number.
We compare a Neural Galerkin solution $\psi_{\text{ng}}$ to the reference solution $\psi_{\text{ref}}$ at the 500 equidistant test points $x_1^{\text{test}}, \dots, x_{500}^{\text{test}}$ at time $t$ via the relative error 
\begin{align}
\newcommand{\refsol}{\psi_{\text{ref}}}
\newcommand{\ngsol}{\psi_{\text{ng}}}
\label{eq:rel_err_t}
    \mathcal{E}^{\text{test}}(t) = \frac{\sum_{i = 1}^{500}\|\refsol(t, x_i^{\text{test}})-\ngsol(t, x_i^{\text{test}})\|_2}{\sum_{i = 1}^{500}\|\refsol(t, x_i^{\text{test}})\|_2}\,.
\end{align}
We also report the mean relative error of \eqref{eq:rel_err_t} over the 200 equidistant test points in the time domain.

\subsubsection{Results} Figure~\ref{fig:schroedinger_collection_rel_err} plots the relative error \eqref{eq:rel_err_t} at end time as well as the mean relative error over the time interval for Neural Galerkin solutions with randomized time stepping, Tikhonov regularization, and regularization via the truncated SVD. We show results for sketching dimensions $m \in \{1, 5, 10, 20, 30, 40\}$ and regularization parameter $\lambda \in [10^{-6}, 10^0]$. The shaded area shows the standard deviation over the ten best replicates in terms of error and the solid line shows the mean over the ten best replicates. The results show that randomized time stepping with sketching has a regularization effect on the Neural Galerkin solution and achieves comparable accuracy as regularization over a wide range of regularization parameters. The number of local samples $q$ has to be increased to $q > 1$ for $m > 30$ to counter the variance that is introduced by the larger sketching dimension $m$.

Let us now consider the runtime of Neural Galerkin schemes with randomized time stepping via sketching versus regularization; see Figure~\ref{fig:schroedinger_collection_runtime}. Because sketching means that at each time step only $m \ll p$ out of all $p$ parameters of the neural network are updated, randomized time stepping achieves lower runtimes for comparable errors than regularization. In fact, the cost complexity of one time step of Neural Galerkin schemes with randomized time stepping via sketching scales $\mathcal{O}(n m^2)$ versus the costs scaling as $\mathcal{O}(n p^2)$ for regularization. In this particular example, this leads to about one order of magnitude runtime speed up with $m = 40$ and $q = 5$ to achieve a comparable error as regularization. We can go a step further and correspondingly compare to the accuracy achieved with regularization when the number of neural-network parameters is comparable to the sketching dimension $m$ so that for about the same number of unknowns is solved at each time step. In Figure~\ref{fig:schroedinger_relerr_smallnetwork}, we show that regularized Neural Galerkin schemes with neural networks with $p^{\prime} = 39$ parameters lead to an error that is about one order of magnitude higher than Neural Galerkin schemes with randomized time stepping and sketching dimension $m = 30$ and $q = 1$, which means a comparable number of parameters that is updated per time step.

\subsection{Reaction-diffusion processes}
In this section, we demonstrate Neural Galerkin schemes with random time stepping on the Allen-Cahn equation to model reaction-diffusion processes. 

\begin{figure}
    \centering
    \resizebox{\textwidth}{!}{\input{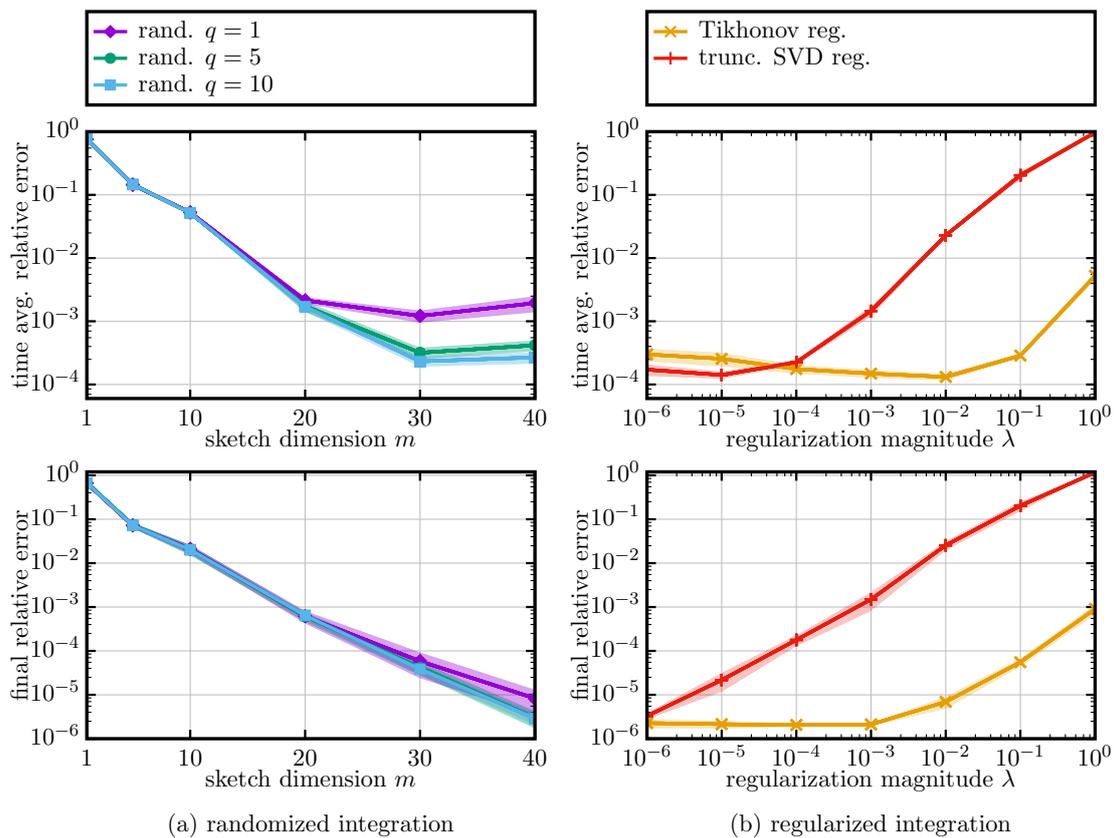}}
    \caption{Reaction-diffusion process: Randomized time stepping with sketching leads to Neural Galerkin solutions that achieve comparable accuracy as solutions obtained with Tikhonov and truncated SVD regularization. 
    }
   
    \label{fig:allen_cahn_1d_collection_rel_err}
\end{figure}

\begin{figure}
    \centering
    \resizebox{\textwidth}{!}{\input{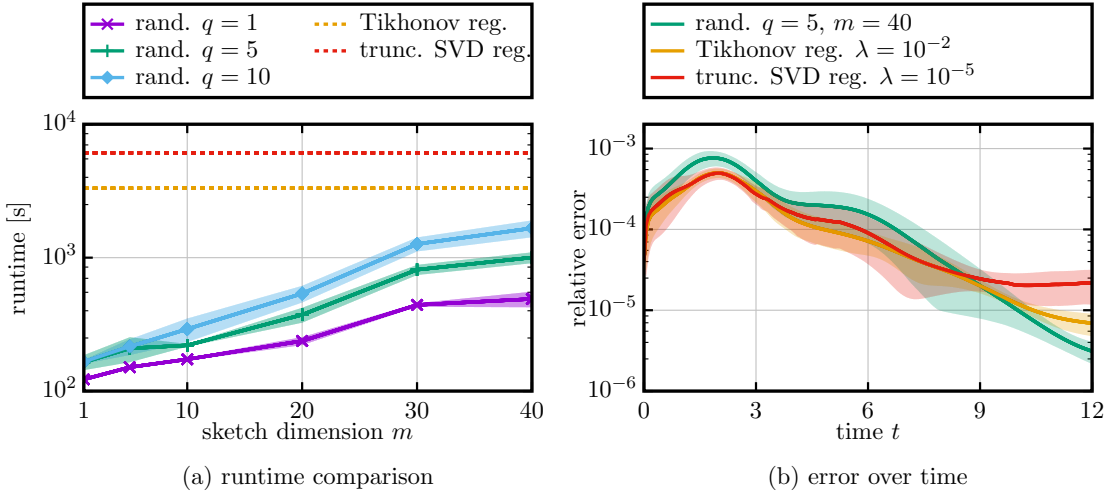}}
    \caption{Reaction-diffusion process: The runtime of the Neural Galerkin scheme with randomized time stepping via sketching is lower than with regularization because only $m \ll p$ parameters out of all $p$ network parameters are updated at each time step. At the same time, a similar accuracy is achieved over the time interval. }
    \label{fig:allen_cahn_1d_collection_runtime}
\end{figure}

\begin{figure}
    \resizebox{\textwidth}{!}{\input{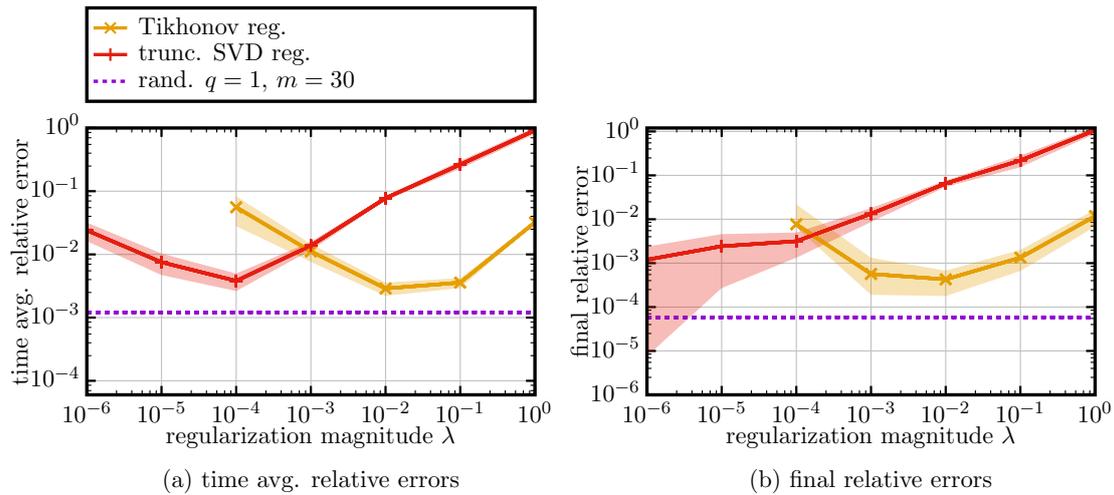}}
\caption{Reaction-diffusion processes: Comparing randomized time stepping with sketching updates only $m = 30$ parameters out of all $p$ parameters of the network at each time step but still achieves one order of magnitude lower errors than the Neural Galerkin solutions obtained with regularization on a smaller network of $p^{\prime} =  34$ parameters that updates all $p^{\prime}$ parameters at each time step.}
\label{fig:allen_cahn_1d_collection_runtime_smallnetwork}
\end{figure}

\subsubsection{Setup}
We consider the Allen-Cahn equation of the form 
    \begin{align}
        \label{eq:allen_cahn_1d}
        \partial_t u(t, x)=\varepsilon \partial_x^2u(t, x)-a(t, x)(u(t, x)-u(t, x)^3),
    \end{align}
which describes reaction-diffusion processes in the one-dimensional spatial domain $[0,1)$ with periodic boundary conditions. We set $\varepsilon=5\times 10^{-4}$ and the coefficient function to $a(t, x)= 1.05 + t  \sin(2 \pi x)$, which is similar to the setup in~\cite{bruna2024neural,BERMAN2024389}. 
The initial condition is 
$u_0(x) = \phi_g(x, 0.03)-\phi_g(x, 0.7)$, where
$\phi_g(x, b) =\exp(-20 \sin(\pi (x - b)))^2)$. 
A benchmark solution is computed with a spectral method of order 500 and fourth-order Runge-Kutta with adaptive time stepping. The time-step adaptation is guided by a relative and absolute error tolerance of $10^{-5}$. For applying Neural Galerkin, we parameterize the solution function $u$ using a deep neural network with the same architecture as in the previous example; the only difference is that we only have one output quantity, as the solution $u$ of \eqref{eq:allen_cahn_1d} has no imaginary part. This means that $\bftheta(t)$ has $p = 1342$ components.

\subsubsection{Results}
Figure~\ref{fig:allen_cahn_1d_collection_rel_err} shows in the first row the mean relative error \eqref{eq:rel_err_t} over the time interval and in the second row the relative error at end time for Neural Galerkin solutions obtained with randomized time stepping, Tikhonov regularization, and regularization via the truncated SVD. Over a wide range of sketching dimensions $m$ and regularization parameters $\lambda$, Neural Galerkin with randomized time stepping achieves comparable accuracy as with regularization. Because only $m \ll p$ parameters are updated at each time step, randomized time stepping leads to a speed up of almost one order of magnitude compared to regularization that updates all $p$ parameters; see Figure~\ref{fig:allen_cahn_1d_collection_runtime}. In Figure~\ref{fig:allen_cahn_1d_collection_runtime_smallnetwork}, we show that using a smaller network with $p^{\prime} = 34$ so that about the same number of parameters are updated at each time step as with randomized time stepping and sketching dimension $m = 30$ and $q = 1$ leads to higher errors in this example. Overall, the results are in close agreement with the results obtained for the double-well quantum dynamics given by the Schr\"odinger equation in the previous section.

\section{Conclusions}\label{sec:Conc}
Random projections via sketching help to improve the conditioning of time-dependent least-squares problems. At the same time, at each time step of the randomized time stepping, only a low-dimensional projection of the parameter vector is updated, which means that the number of unknowns in the least-squares problems scales with the sketching dimension. Thus, the costs of solving the least-squares problem at each time step is lower than when applying regularization that updates the parameter vector in all directions, e.g., in all components. 
The observation that updating only a low-dimensional projection of the parameter is sufficient indicates that locally in time the problems are low dimensional, even though globally in time a rich, nonlinear parametrization can be necessary. This insight is further supported by dynamic low-rank methods with adaptive ranks \cite{Ceruti2022a,hesthaven_pagliantini_rozza_2022,doi:10.1137/22M1534948} as well as the work \cite{berman2024randomized} that updates only a random subset of all components of the parameter vectors. Furthermore, it shows that nonlinear parametrizations that depend on time can be beneficial in this sense compared to approaches that aim to approximate the solution fields globally in time and space, which cannot leverage such local-in-time properties. 
The randomized time stepping scheme proposed in this work explicitly takes the local-in-time low dimensionality into account, which is different from standard regularization schemes that update parameters in all directions at every time step.

\section*{Acknowledgments}
This work was partially supported by the National Science Foundation grant 2046521 and by the Office of Naval Research award N00014-22-1-2728. 

\bibliographystyle{my-sn-mathphys-num}
\bibliography{ref}

\appendix

\section{Beta concentration}

\begin{proof}[Proof of Lemma~\ref{lm:BetaConcentration}]
The random variable $Z$ has the same distribution as the ratio $X/(X + Y)$ with independent random variables $X \sim \chi^2(\ell)$ and $Y \sim \chi^2(p - \ell)$; see, e.g., \cite[Section 2, p.~212]{JohnsonRV}. %
Representing $Z$ in distribution as the ratio of $\chi^2$ random variables allows us to derive a bound on $Z$ from bounds on $\chi^2$ random variables. Let us therefore recall \cite[Lemma~1]{10.1214/aos/1015957395}, which states that for 
$X$ and $Y$ we obtain for $\tau > 0$
\begin{align}
\mathbb{P}\left[X \geq \ell + 2\sqrt{\ell \tau} + 2\tau\right] \leq \exp(-\tau)\,, & \mathbb{P}\left[Y \leq (p - \ell) - 2\sqrt{(p - \ell)\tau}\right] \leq \exp(-\tau)\,,\label{eq:ProofCondAuxLemmaCB1}\\
\mathbb{P}\left[X \leq \ell - 2\sqrt{\ell\tau}\right] \leq \exp(-\tau)\,, & \mathbb{P}\left[Y \geq (p - \ell) + 2\sqrt{(p - \ell)\tau} + 2\tau\right] \leq \exp(-\tau)\label{eq:ProofCondAuxLemmaCB2}\,.
\end{align}
We first consider $\ell \leq p/2$ so that the mean of $Z$ is $\frac{\ell/2}{\ell/2 + (p - \ell)/2} \leq 1/2$. To upper bound $|Z - \frac{\ell}{p}| > \epsilon\frac{\ell}{p}$ in probability, we lower bound the probability of $|Z - \frac{\ell}{p}| \leq \epsilon\frac{\ell}{p}$ and first consider the direction $Z - \frac{\ell}{p} \leq \epsilon \frac{\ell}{p}$. We invoke \eqref{eq:ProofCondAuxLemmaCB1} to obtain the combined bound
\[
\mathbb{P}\left[\{X \leq \ell + 2\sqrt{\ell \tau} + 2\tau\} \cap \{Y \geq (p - \ell) - 2\sqrt{(p - \ell)\tau}\}\right] \geq 1 - 2\exp(-\tau)
\]
Recall that $X, Y \geq 0$ so that the ratio $X/(X + Y)$ increases with increasing $X$ and decreasing $Y$ so that it is sufficient to have an upper bound on $X$ and a lower bound on $Y$ to upper bound the ratio $X/(X + Y)$. Thus, we obtain that
\begin{equation}\label{eq:CondProofIntermediateFirstXXYBound}
\frac{X}{X + Y} \leq \frac{\ell + 2\sqrt{\ell \tau} + 2\tau}{\ell + 2\sqrt{\ell \tau} + 2\tau + (p - \ell) - 2\sqrt{(p - \ell)\tau}}
\end{equation}
holds with probability at least $1 - 2\exp(-\tau)$. Recall that $0 < \epsilon \leq \min\{1, \frac{p - \ell}{\ell}\}$. Because we are in the case of $\ell \leq p/2$, we have $(p - \ell)/\ell \geq 1$ and thus $0 < \epsilon \leq 1$. We set $\tau = \epsilon^2 \ell/64$ and obtain
\begin{equation}\label{eq:CondProofIntermediateEpsDefs}
2 \sqrt{\ell \tau} = \frac{\epsilon \ell}{4}\,, \quad 2\tau = \frac{\epsilon^2 \ell}{32}\,,\quad 2\sqrt{(p - \ell)\tau} = \frac{\epsilon}{4}\sqrt{\ell(p - \ell)}\,.
\end{equation}
Building on \eqref{eq:CondProofIntermediateFirstXXYBound} and plugging in \eqref{eq:CondProofIntermediateEpsDefs}, we obtain
\begin{equation}\label{eq:CondProofIntermediateFirstXXYBound2}
\frac{X}{X + Y} \leq \frac{1 + \epsilon/4 + \epsilon^2/32}{\frac{p}{\ell} + \frac{\epsilon}{4}(1 - \sqrt{\frac{p}{\ell} - 1}) + \epsilon^2/32}
\end{equation}
with probability at least $1 - 2\exp(-\tau)$. Because we currently consider the case $\ell \leq p/2$, we have
\begin{equation}\label{eq:CondProofIntermediateLP2Condition}
1  \leq \sqrt{\frac{p}{\ell} - 1} \leq \frac{p}{2\ell}\,,
\end{equation}
which allows lower bounding the denominator of \eqref{eq:CondProofIntermediateFirstXXYBound2} as
\[
\frac{p}{\ell} + \frac{\epsilon}{4}\left(1 - \sqrt{\frac{p}{\ell} - 1}\right) + \frac{\epsilon^2}{32} \geq \frac{p}{\ell} + \frac{\epsilon}{4}\left(1 - \frac{p}{2\ell}\right) + \frac{\epsilon^2}{32} = \frac{p}{\ell}\left(1 - \frac{\epsilon}{8}\right) + \frac{\epsilon}{4} + \frac{\epsilon^2}{32}\,.
\]
It is now left to show that
\begin{equation}\label{eq:CondProofIntermediateFirstXXYBound3}
\frac{1 + \frac{\epsilon}{4} + \frac{\epsilon^2}{32}}{\frac{p}{\ell}\left(1 - \frac{\epsilon}{8}\right) + \frac{\epsilon}{4} + \frac{\epsilon^2}{32}} \leq (1 + \epsilon)\frac{\ell}{p}
\end{equation}
holds. To see that \eqref{eq:CondProofIntermediateFirstXXYBound3} is indeed true, drop $\epsilon/4 + \epsilon^2/32 \geq 0$ in the denominator on the left-hand side, which increases the fraction and thus proving the resulting stronger inequality is sufficient. Next, cancel $p/\ell$ on the left- and right-hand side, and notice that $0 < \epsilon \leq 1$ implies that the denominator on the left-hand side is positive. Thus, we obtain that \eqref{eq:CondProofIntermediateFirstXXYBound3} holds because
\[
1 + \frac{\epsilon}{4} + \frac{\epsilon^2}{32} \leq (1 + \epsilon)\left(1 - \frac{\epsilon}{8}\right) = 1 + \frac{7}{8}\epsilon - \frac{1}{8}\epsilon^2\,.
\]
holds, which holds because $0 < \epsilon \leq 1$. Thus, we have shown for $\ell \leq p/2$ that
\begin{equation}\label{eq:CondProofIntermediateResult1}
\mathbb{P}\left[Z - \frac{\ell}{p} \leq \epsilon \frac{\ell}{p}\right] \geq 1 - 2\exp\left(-\frac{\epsilon^2 \ell}{64}\right)
\end{equation}
for $0 < \epsilon \leq 1$.

Still for $\ell \leq p/2$, we now consider $-(Z - \frac{\ell}{p}) \leq \epsilon\frac{\ell}{p} \iff Z \geq (1 - \epsilon)\frac{\ell}{p}$. For this, we now lower bound the ratio $X/(X + Y)$. Because $X, Y \geq 0$, the ratio $X/(X + Y)$ decreases with decreasing $X$ and increasing $Y$. Thus, it is sufficient to have a lower bound on $X$ and an upper bound on $Y$ to lower bound the ratio. With  \eqref{eq:ProofCondAuxLemmaCB2}, we obtain that
\begin{equation}\label{eq:ProofCondIntermediateLP2Bound2}
\frac{X}{X + Y} \geq \frac{\ell - 2\sqrt{\ell \tau}}{p - 2\sqrt{\ell\tau}  + 2\sqrt{(p-\ell)\tau} + 2\tau} = 
\frac{1 - \frac{\epsilon}{4}}{\frac{p}{\ell} + \frac{\epsilon}{4}(\sqrt{\frac{p}{\ell} - 1} - 1) + \frac{\epsilon^2}{32}}\,,
\end{equation}
holds with probability at least $1 - 2\exp(-\tau)$, where we set $\tau = \epsilon^2 \ell/64$ again. We now need to show that \eqref{eq:ProofCondIntermediateLP2Bound2} is lower bounded by $(1 - \epsilon)\frac{\ell}{p}$. First notice that the denominator of the last equality in \eqref{eq:ProofCondIntermediateLP2Bound2} is positive because of \eqref{eq:CondProofIntermediateLP2Condition}, and thus 
\[
\frac{1 - \frac{\epsilon}{4}}{\frac{p}{\ell} + \frac{\epsilon}{4}(\sqrt{\frac{p}{\ell} - 1} - 1) + \frac{\epsilon^2}{32}} \geq (1 - \epsilon)\frac{\ell}{p} \iff \frac{3}{4}\frac{p}{\ell} - \frac{1}{4}(1 - \epsilon)\left(\sqrt{\frac{p}{\ell} - 1} - 1\right) - (1 - \epsilon)\frac{\epsilon}{32} \geq 0\,,
\]
where we used that $\epsilon > 0$. We invoke \eqref{eq:CondProofIntermediateLP2Condition} and $0 < \epsilon \leq 1$ again to obtain
\[
\frac{1}{4}(1 - \epsilon)\left(\sqrt{\frac{p}{\ell} - 1} - 1\right) \leq \frac{1}{4}\left(\sqrt{\frac{p}{\ell} - 1} - 1\right) \leq \frac{1}{8}\frac{p}{\ell}\,,\qquad (1 - \epsilon)\frac{\epsilon}{32} \leq \frac{1}{32}\,.
\]
Thus, it is sufficient that
\[
\frac{3}{4}\frac{p}{\ell} - \frac{1}{8}\frac{p}{\ell} - \frac{1}{32} \geq 0
\]
holds, which is true because $\ell \leq p/2$. Together with \eqref{eq:CondProofIntermediateResult1} and union bound, we obtain that
\[
\mathbb{P}\left[\left|Z - \frac{\ell}{p}\right| \leq \epsilon \frac{\ell}{p}\right] \geq 1 - 4\exp\left(-\frac{\epsilon^2 \ell}{64}\right)
\]
holds for $0 < \epsilon \leq 1$ if $\ell \leq p/2$. 

Let us now consider the case $\ell > p/2$, which means that the Beta random variable $Z$ has mean $\ell/p > 1/2$. We introduce the random variable $Z' = 1 - Z$ with mean $\mu' = 1 - \frac{\ell}{p} = (p - \ell)/p \in (0, \frac{1}{2})$. Notice that $|Z - \frac{\ell}{p}| = |Z' - \mu'|$. Thus, if we bound the probability of the event $\{|Z' - \mu'| > \epsilon' \mu'\}$ with $\epsilon' = \frac{\epsilon\ell}{p\mu'} = \epsilon \ell /(p - \ell)$, then we also obtain a bound on the desired probability of $\{|Z - \frac{\ell}{p}| > \epsilon \frac{\ell}{p}\}$. Furthermore, the random variable $Z' = 1 - Z$ has the same distribution as the ratio $Y/(X + Y)$, which is a Beta random variable $\operatorname{Beta}(\frac{p - \ell}{2}, \frac{\ell}{2})$. Set $\ell' = p - \ell$ and $p' = p$ to obtain that $Z'$ is a Beta variable $\operatorname{Beta}(\frac{\ell'}{2}, \frac{p' - \ell'}{2})$ with $\ell' \leq p'/2$ and $\ell'/p' = (p - \ell)/p = \mu'$. Thus, analogous to the case $\ell \leq p/2$, we obtain the bound
\begin{equation}\label{eq:CondProofIntermediateResult3}
\mathbb{P}\left[\left| Z' - \mu' \right| > \epsilon'\mu'\right] \leq 4\exp\left(-\frac{(\epsilon')^2 (p - \ell)}{64}\right)
\end{equation}
for $0 < \epsilon' \leq 1$. We now invoke that $0 < \epsilon \leq \min\{1, (p - \ell)/\ell\}$ and that we are interested in $\epsilon' = \epsilon \ell/(p - \ell)$, which implies that $0 < \epsilon' \leq 1$. Furthermore, because $\epsilon' = \epsilon \ell/(p - \ell)$ we have that \eqref{eq:CondProofIntermediateResult3} implies
\[
\mathbb{P}\left[|Z - \frac{\ell}{p}| > \epsilon \frac{\ell}{p}\right] = \mathbb{P}\left[\left| Z' - \mu' \right| > \epsilon'\mu'\right] \leq 4 \exp\left(- \frac{\epsilon^2 \ell^2}{64 (p - \ell)}\right) \leq 4\exp\left(-\frac{\epsilon^2\ell}{64}\right)
\]
for $0 < \epsilon \leq \min\{1, (p - \ell)/\ell\}$ and $\ell > p/2$, which shows the lemma. 
\end{proof}

\end{document}